\documentclass[a4paper,10pt]{amsart}
\RequirePackage{iftex}
\ifPDFTeX
  \RequirePackage[T1]{fontenc}
  \RequirePackage[utf8]{inputenc}
  \RequirePackage{lmodern}
\else
  \RequirePackage{lmodern}
  \RequirePackage[no-math]{fontspec}
\fi
\RequirePackage[a4paper,margin=1in]{geometry}
\RequirePackage{amsmath,amssymb,amsfonts,amsthm,mathtools}
\RequirePackage{microtype}
\ifPDFTeX
  \DisableLigatures{encoding=*,family=*}
\fi
\RequirePackage[none]{hyphenat}
\RequirePackage{enumitem}
\setlist[enumerate]{label=(\roman*),leftmargin=2em,itemsep=3pt,topsep=5pt}
\RequirePackage[svgnames]{xcolor}
\RequirePackage[colorlinks=true,citecolor=MediumBlue,linkcolor=Crimson,urlcolor=MediumBlue]{hyperref}
\numberwithin{equation}{section}
\theoremstyle{plain}
\newtheorem{theorem}{Theorem}[section]
\newtheorem{proposition}[theorem]{Proposition}
\newtheorem{lemma}[theorem]{Lemma}
\newtheorem{corollary}[theorem]{Corollary}
\theoremstyle{definition}
\newtheorem{definition}[theorem]{Definition}
\theoremstyle{remark}

\hypersetup{pdftitle={Mixed numerical criteria for Hessian equations on compact Kahler manifolds}}
\newcommand{\ddc}{\mathrm{i}\partial\bar\partial}
\newcommand{\R}{\mathbb R}
\newcommand{\Ccal}{\mathcal C}
\newcommand{\Kcal}{\mathcal K}
\newcommand{\Pcal}{\mathcal P}
\newcommand{\Pol}{\operatorname{Pol}}

\newcommand{\Gr}{\operatorname{Gr}}
\newcommand{\ac}{\mathrm{ac}}
\newcommand{\tr}{\operatorname{tr}}
\newcommand{\Int}{\operatorname{int}}

\newcommand{\Herm}{\operatorname{Herm}}

\newcommand{\supp}{\operatorname{supp}}
\title{ Nakai-Moishezon type criteria for Hessian type \\ equations on compact K\"ahler manifolds}
\author{Jixiang Fu}
\address{Shanghai Center for Mathematical Sciences,
Fudan University,
Shanghai 200433, China}
\email{majxfu@fudan.edu.cn}

\author{Xuan Li}
\address{Shanghai Institute for Mathematics and Interdisciplinary Sciences (SIMIS), Shanghai 200433, China}
\email{lixuan@simis.cn}

\author{Dekai Zhang}
\address{School of Mathematical Sciences, Key Laboratory of Mathematics
and Engineering Applications (Ministry of Education), Shanghai Key Laboratory
of PMMP, East China Normal University, Shanghai 200241, China}
\email{dkzhang@math.ecnu.edu.cn}

\author{Ziyi Zhang}
\address{School of Mathematical Sciences, Fudan University,
Shanghai 200433, China}
\email{21210180101@m.fudan.edu.cn}
\date{}
\begin{document}
\begin{abstract}
We establish  numerical criteria for polynomial positivity
of   real $(1,1)$-classes on compact K\"ahler manifolds for strictly  right-Noetherian polynomials.  As applications,
we obtain existence and uniqueness for the generalized Monge-Amp\`ere equations  with
a smooth zeroth coefficient, the complex Hessian and Hessian quotient
equations, and the critical LYZ equation. We also characterize
the cone of $k$-positive classes as a connected component
of a numerical positivity cone.
\end{abstract}
\maketitle
\begingroup
\makeatletter
\renewcommand{\tocsection}[3]{%
  \indentlabel{\@ifnotempty{#2}{%
    \makebox[2.5em][l]{\ignorespaces#1 #2.}}}#3}
\makeatother
\tableofcontents
\endgroup

\section{Introduction}\label{sec:introduction}

Yau's solution of the Calabi conjecture \cite{Yau} shows that
every smooth positive volume form with the prescribed total volume is
the volume form of a unique K\"ahler metric in a fixed K\"ahler class.
The resulting Calabi-Yau theorem is a fundamental existence theorem for the
complex Monge-Amp\`ere equation. The study of the $J$-equation, complex
Hessian equations, and Hessian quotient equations has led to existence
theorems under suitable positivity or subsolution conditions
\cite{SongWeinkove,FangLaiMa,HouMaWu,DinewKolodziej,Sun,Szekelyhidi}.
A central question is whether these conditions can be characterized by
intersection numbers. Demailly and P\u{a}un \cite{DP} answered this
question for the K\"ahler cone. Their theorem identifies it with a
connected component of the classes whose top self-intersection is
positive on every irreducible analytic subvariety.

For the $J$-equation, Lejmi and Sz\'ekelyhidi
\cite{LejmiSzekelyhidi} conjectured a numerical criterion for solvability.
Collins and Sz\'ekelyhidi \cite{CollinsSzekelyhidi} proved the toric case.
Chen \cite{Chen} established the criterion under a uniform numerical
condition on compact K\"ahler manifolds, and Song \cite{Song} treated
strict positivity without a uniform constant.
Sz\'ekelyhidi \cite{Szekelyhidi} also proposed a numerical criterion
for Hessian quotient equations in a $k$-positive class.
Related criteria for generalized Monge-Amp\`ere equations were proved
by Datar and Pingali \cite{DatarPingali} on projective manifolds and
by Fang and Ma \cite{FangMa} for equations with differential form
coefficients, under their respective hypotheses.

The Leung-Yau-Zaslow equation, which we call the LYZ equation, is of
particular interest here. It arises from the relation between special
Lagrangian submanifolds and Hermitian connections under the
Fourier-Mukai transform \cite{LYZ}. It is also called the deformed
Hermitian-Yang-Mills equation. Jacob and Yau \cite{JacobYau} developed
its analytic theory for holomorphic line bundles. Collins and Yau
\cite{CollinsYau} studied its variational structure and its relation
to stability in mirror symmetry.
On a compact K\"ahler manifold $(X,\chi)$ of complex dimension $n$,
the equation asks for a smooth closed real $(1,1)$-form
$\omega\in[\alpha]$ such that
\begin{equation}\label{eq:LYZintro}
 \sum_{j=1}^n\arctan\lambda_j(\chi^{-1}\omega)=\hat\theta.
\end{equation}
Here $\alpha$ is a fixed smooth closed real $(1,1)$-form, and
$\arctan$ takes values in $(-\pi/2,\pi/2)$.
The critical phase is $\hat\theta=(n-2)\pi/2$.

For the supercritical phase, Collins, Jacob, and Yau \cite{CJY}
obtained a priori estimates under a subsolution condition and derived
numerical obstructions to solvability. Chen \cite{Chen} related
solvability to uniform numerical positivity. Chu, Lee, and Takahashi
\cite{CLT} proved a numerical criterion along test families on compact
K\"ahler manifolds. Fu, Yau, and Zhang \cite{FYZFlow} introduced a
flow for the LYZ equation and proved convergence under their
subsolution and phase assumptions. At the critical phase, Fu, Yau,
and Zhang \cite{FYZ} proved an existence theorem under a subsolution
condition. Our application to the LYZ equation gives a numerical
criterion for the existence of such a subsolution.

Chen, Nie, and Xu \cite{CNX} recently proved numerical criteria for
a class of complex Hessian equations on projective manifolds.
Their results include uniform criteria for the complex Hessian and
Hessian quotient equations. The relevant polynomials are
right-Noetherian, in the sense introduced by Lin
\cite{LinConvexity}; see also Fang and Ma \cite{FangMaGarding}.
We prove criteria on compact K\"ahler manifolds using mixed
intersection numbers with one auxiliary K\"ahler form. In the
projective case, an ample integral class can be realize the numerical criteria in~\cite{CNX} by taking complete intersections. The need to specify
the component or to impose further numerical conditions is also
illustrated by Zhang's examples \cite{Zhang} for the LYZ equation.

Throughout the paper, Greek letters such as $\alpha$, $\chi$, and
$\kappa$ denote differential forms, and their cohomology classes
are written $[\alpha]$, $[\chi]$, and $[\kappa]$.
Products of forms denote wedge products. Unless stated otherwise,
$(X,\chi)$ is a compact connected K\"ahler manifold of complex
dimension $n$, and $\alpha$ is a smooth closed real $(1,1)$-form.

\subsection{Polynomial equations}\label{sec:statement}

Let $f$ be a monic polynomial of degree $1\le d\le n$, written as
\begin{equation}\label{eq:polynomial}
 f(t)=t^d-\sum_{i=0}^{d-1}\binom di c_i t^i,
 \qquad c_i\in\mathbb R.
\end{equation}
The definitions of right-Noetherianity and the associated cones
$\mathcal C_f(\chi)$ and $\mathcal D_f(\chi)$ are given in
Section~\ref{sec:cones}. In particular, strict right-Noetherianity
requires a strict gap only between the largest roots of $f$ and $f'$;
the later derivative roots may coincide.

We use the following mixed intersection inequalities:
\begin{equation}\label{eq:mixed}
 \int_V\kappa^{p-q}\chi^{n-d}
 \left(\alpha^{d-n+q}
 -\sum_{i=0}^{d-n+q-1}\binom{d-n+q}{i}
          c_{n-q+i}\alpha^i\chi^{d-n+q-i}\right)>0,
\end{equation}
where $V\subset X$ is irreducible of dimension $p$, including
$V=X$, and $n-d\le q\le\min\{p,n-1\}$. An empty sum is zero.
The K\"ahler form $\kappa$ is fixed independently of $V,p,q$.

\begin{theorem}\label{thm:fullPDE}
Let $c_1,\ldots,c_{n-1}$ be real constants and let
$c_0\in C^\infty(X,\mathbb R)$. Suppose that
\[
 f_x(t)=t^n-\sum_{i=1}^{n-1}\binom ni c_i t^i-c_0(x)
\]
is strictly right-Noetherian at every point of $X$, and assume
\begin{equation}\label{eq:fullidentity}
 \int_X\alpha^n
 =\sum_{i=1}^{n-1}\binom ni c_i\int_X\alpha^i\chi^{n-i}
       +\int_Xc_0\chi^n.
\end{equation}
Then the following conditions are equivalent.
\begin{enumerate}
\item There is a K\"ahler form $\kappa$ for which
\eqref{eq:mixed} holds with $d=n$.
\item There is a smooth closed form
$\omega\in[\alpha]\cap\mathcal D_{f_x}(\chi)$ satisfying
\begin{equation}\label{eq:fullPDE}
 \omega^n=\sum_{i=1}^{n-1}\binom ni c_i\omega^i\chi^{n-i}
                    +c_0(x)\chi^n.
\end{equation}
\end{enumerate}
Moreover, $\omega-c_{n-1}\chi$ is K\"ahler.
\end{theorem}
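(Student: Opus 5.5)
We prove the two implications separately. The strategy is to reduce (ii)$\Rightarrow$(i) to positivity of restricted polynomial forms, and (i)$\Rightarrow$(ii) to the existence of a subsolution followed by a continuity method.

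\textbf{(ii)$\Rightarrow$(i).} Let $\omega\in[\alpha]\cap\mathcal D_{f_x}(\chi)$ solve \eqref{eq:fullPDE}. Strict right-Noetherianity of $f_x$, together with membership in $\mathcal D_{f_x}(\chi)$, gives pointwise positivity of the derivative polynomials. Concretely, for each $q$ with $0\le q\le n-1$, the $(n-q)$-th derivative form
\[
\omega^{q}-\sum_{i<q}\binom qi c_{n-q+i}\,\omega^i\chi^{q-i}
\]
should be positive, in the Gårding cone sense, on every $q$-dimensional subspace. This is the Section~\ref{sec:cones} property that a class in $\mathcal D_f$ lies in the cone of each derivative $f^{(j)}$, $j\ge1$. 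In particular $\omega-c_{n-1}\chi>0$, since $f^{(n-1)}$ is linear with root $c_{n-1}$; this gives the ``moreover'' statement.

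Next take $\kappa:=\omega-c_{n-1}\chi$, or any Kähler form. Wedge the positive $(q,q)$-form above with $\kappa^{p-q}$ and integrate over a $p$-dimensional $V$. Positivity on subspaces, together with the product of positive forms with Kähler forms, yields \eqref{eq:mixed} for $q\le\min\{p,n-1\}$; here $d=n$, so $n-d=0$. Since the integrands are built from $\omega$ and $\chi$ only, $\int_V$ depends only on $[\alpha]$.

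\textbf{(i)$\Rightarrow$(ii).} This is the main direction, done in two stages.

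\emph{Stage 1: constructing a subsolution.} From \eqref{eq:mixed} we want a form $\omega_0\in[\alpha]$ that is a strict subsolution in the sense of Székelyhidi: at each point, the $\Gamma$-cone condition for $f_x$ should hold after removing one eigendirection. In the right-Noetherian setting this amounts to $\omega_0$ lying in $\mathcal C_{f_x}(\chi)$. We would build $\omega_0$ by induction on the index $q$, following the Demailly--Păun and Chen/Song approach. For each fixed $q$, consider the class $[\alpha]-c_{n-1}[\chi]$. Condition \eqref{eq:mixed} with $q=1$ and all $V$ shows, via Demailly--Păun applied with the auxiliary $\kappa$ (the mixed form $\int_V\kappa^{p-1}(\alpha-c_{n-1}\chi)>0$), that this class is Kähler. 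Hence there is a representative on which the top-derivative cone condition holds. We then climb $q=2,\dots,n$ using the Song/Chen concentration-of-mass argument. At stage $q$ we solve an auxiliary twisted Monge--Ampère equation along a path $\alpha+t\kappa$, $t\searrow0$. Where the $q$-th condition fails, mass concentrates on an analytic subvariety $V$, by Siu decomposition of a weak limit current. Integrating the degenerate inequality over $V$ contradicts \eqref{eq:mixed} for that $V$ and index $q$. The role of $\kappa^{p-q}$ is to fill the missing dimensions, so that no ample class is needed.

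I expect Stage 1 to be the main obstacle. It requires an induction on dimension that passes from positivity on subvarieties to positivity on a neighborhood, with gluing by regularized maxima. The first thing to try is the Song-style argument, in which the polynomial is replaced by its restriction to $V$ and strict right-Noetherianity guarantees that the restricted polynomials remain in the same class.

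\emph{Stage 2: solving the equation.} Given the subsolution $\omega_0$, the operator $\omega\mapsto f_x$ is a concave elliptic operator on $\mathcal D_{f_x}$. Right-Noetherianity gives concavity of the largest-root function, and the smooth zeroth coefficient $c_0(x)$ enters only as the right-hand side. The normalization \eqref{eq:fullidentity} is the necessary cohomological compatibility. A continuity method, or a twisted path with a constant that equals $0$ by \eqref{eq:fullidentity}, combined with Székelyhidi's $C^0$ and $C^2$ estimates under a $\mathcal C$-subsolution and Evans--Krylov, gives a smooth solution $\omega\in\mathcal D_{f_x}(\chi)$.
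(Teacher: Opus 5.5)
Your argument for (ii)$\Rightarrow$(i), and for the assertion that $\omega-c_{n-1}\chi$ is K\"ahler, is correct and is the paper's argument. The gap is in Stage~1 of (i)$\Rightarrow$(ii), which carries essentially all of the content.

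\textbf{The target cone is empty.} Since $c_1,\dots,c_{n-1}$ are constants, \eqref{eq:fullidentity} says that $\int_X\Phi_n^{f_x}(\omega_0,\chi)=0$ for \emph{every} smooth $\omega_0\in[\alpha]$. But $\omega_0\in\Ccal_{f_x}(\chi)$ would force $\Phi_n^{f_x}(\omega_0,\chi)>0$ pointwise. Hence $[\alpha]\cap\Ccal_{f_x}(\chi)=\varnothing$, and the form you set out to build does not exist. The subsolution must lie in $\mathcal D_{f_x}(\chi)$, and you are missing the reduction that produces one.

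Write $f_x=P-c_0(x)$ and $r_1=r(P')$. The strict first gap at every point, together with compactness, gives
\[
\bar c_0-P(r_1)\ \ge\ \min_X\bigl(c_0-P(r_1)\bigr)\ >\ 0,
\]
where $\bar c_0$ is the $\chi^n$-average of $c_0$. So for small $\varepsilon>0$ the polynomial $g=P-\bar c_0+\varepsilon$ has constant coefficients and is strictly right-Noetherian. Moreover:
\begin{enumerate}
\item its top integral is $\varepsilon\int_X\chi^n>0$;
\item its tests \eqref{eq:mixed} coincide with those of $f_x$, since they do not involve $c_0$;
\item its derivative cone equals $\mathcal D_{f_x}$, since positive-order partials of the polarization do not see the zeroth coefficient.
\end{enumerate}
A form in $[\alpha]\cap\Ccal_g(\chi)$ is therefore the required subsolution. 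Without this shift, the top integral vanishes at $t=0$ on your path $[\alpha]+t[\kappa]$. A concentration argument then gains nothing there, because the ratio $V/D$ is zero.

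\textbf{The construction does not work as written.} Your Demailly--P\u{a}un step is unjustified. Demailly--P\u{a}un needs $\int_V(\alpha-c_{n-1}\chi)^p>0$ on every $V$, whereas the $q=1$ tests only control $\int_V\kappa^{p-1}\wedge(\alpha-c_{n-1}\chi)$. These tests alone do not give K\"ahlerness: on a generic $2$-torus, which has no curves, they reduce to $\int_X\kappa\wedge\beta>0$. In this theorem, K\"ahlerness of $[\alpha]-c_{n-1}[\chi]$ is an output, not something you can read off from $q=1$. Likewise, ``climbing in $q$'' and ``integrating the degenerate inequality over $V$ for a contradiction'' is not a mechanism that produces a smooth representative.

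What is actually needed (Theorem~\ref{thm:positive}) is an induction on $\dim X$ and on the dimension of analytic subsets, run along the ray $[\alpha]+t[\kappa]$:
\begin{enumerate}
\item At the infimum $t_*$ of admissible times, mass concentration against Tosatti's current supported on a Lebesgue-null set, combined with a Lamari-type duality, gives a current $T$ in the class with $T\ge-C\chi$ and $(T-\delta\kappa)_{\ac}\in\overline{\Ccal_g(\chi)}$ for a definite $\delta>0$.
\item The Lelong superlevel set of $T$ is analytic by Siu's theorem.
\item Admissible potentials near that set come from the induction, via resolutions, the perturbation Lemma~\ref{lem:quantlift} and normal extension.
\item Regularized-maximum gluing yields a global smooth form, so $t_*$ is admissible, and openness forces $t_*=0$.
\end{enumerate}
Your sketch supplies none of these quantitative steps. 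In particular, nothing in it provides a margin like $\delta\kappa$ that survives regularization.

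Stage~2 is acceptable in outline. It amounts to reproving Lin's existence theorem, which the paper simply invokes after translating by $c_{n-1}$.
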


The inequalities in \textup{(i)} do not involve $c_0$.
This is a mixed numerical criterion on compact K\"ahler manifolds
for the equation in \cite[Theorem 1.1]{CNX}. We use the analytic
existence theorem of Lin \cite{Lin} after constructing a subsolution.
For polynomials of lower degree, we obtain the following result.

\begin{theorem}\label{thm:main}
Let $f$ be as in \eqref{eq:polynomial}, with constant coefficients,
and suppose that
\begin{equation}\label{eq:topzero}
 \int_X\alpha^d\chi^{n-d}
 =\sum_{i=0}^{d-1}\binom di c_i\int_X\alpha^i\chi^{n-i}.
\end{equation}
For $d\ge2$, assume that $f$ is strictly right-Noetherian.
For $d=1$, no further root condition is required.
The following conditions are equivalent.
\begin{enumerate}
\item There is a K\"ahler form $\kappa$ for which
\eqref{eq:mixed} holds.
\item For every sufficiently small $\varepsilon>0$, there is a
smooth closed form
$\omega_\varepsilon\in[\alpha]\cap\mathcal C_{f+\varepsilon}(\chi)$.
\end{enumerate}
If these conditions hold and $d\ge2$, then \textup{(ii)} holds whenever
$0<\varepsilon<-f(r(f'))$, where $r(f')$ is the largest real root
of $f'$. For $d=1$, it holds for every $\varepsilon>0$.
If these conditions hold, the left side of \eqref{eq:mixed} is
bounded below by $\delta_\kappa\int_V\kappa^{p-q}\chi^q$ for every
K\"ahler form $\kappa$, with one constant $\delta_\kappa>0$
independent of $V,p,q$.
\end{theorem}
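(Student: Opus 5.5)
The plan is to prove the two implications separately, with the substantive direction (i)$\Rightarrow$(ii) handled by induction on dimension in the spirit of Demailly--P\u{a}un, Chen and Song, combined with a pointwise algebraic analysis of right-Noetherian polynomials.

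\emph{Setup.} Shift $\alpha$ by $\lambda=r(f)$, the largest root, so that the condition $\omega\in\mathcal C_{f+\varepsilon}(\chi)$ becomes a positivity condition on $\omega-\lambda\chi$ plus a cone condition of G\aa rding type for the shifted polynomial. Strict right-Noetherianity should give an expansion of $f(t+s)$ in which every Taylor coefficient $f^{(j)}(s)$ is nonnegative for $s\ge r(f')$, and $f(r(f'))<0$. I will use these facts to rewrite the left side of \eqref{eq:mixed} as $\int_V\kappa^{p-q}\chi^{n-d}F_{q}(\alpha,\chi)$, where $F_q$ is the homogenization of the $(d-n+q)$-th ``truncated'' polynomial. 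The condition $\omega\in\mathcal C_{f+\varepsilon}$ should then be equivalent, pointwise, to positivity of all these truncations on $\omega$ (the Lin/Fang--Ma characterization of the cone for right-Noetherian polynomials, from Section~\ref{sec:cones}).

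\emph{(ii)$\Rightarrow$(i).} Take $\omega_\varepsilon\in[\alpha]\cap\mathcal C_{f+\varepsilon}(\chi)$ and $\kappa=\chi$ (or any K\"ahler form). The integrand $\kappa^{p-q}\chi^{n-d}F_q(\omega_\varepsilon,\chi)$ restricted to $V_{\rm reg}$ is a positive top form. This follows from the pointwise cone characterization together with Gårding-type mixed inequalities, since the mixed form is a positive combination of elementary symmetric functions on subspaces. Because $[\alpha]=[\omega_\varepsilon]$ and integration over $V$ is cohomological, strict positivity follows. The uniform bound $\delta_\kappa\int_V\kappa^{p-q}\chi^q$ also comes from here: pointwise $F_q(\omega_\varepsilon,\chi)\ge\delta\chi^{d-n+q}$ on compact $X$ for fixed small $\varepsilon$, and the mixed product with $\kappa^{p-q}\chi^{n-d}$ then dominates $\delta\kappa^{p-q}\chi^q$.

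\emph{(i)$\Rightarrow$(ii).} This is the main obstacle. I will follow Song's approach to the $J$-equation and Chen's approach. Consider the family of equations $F_q(\omega)\wedge\kappa^{\cdot}$ of degree $d$ and solve the ``twisted'' equation
\[
(\omega^d-\textstyle\sum\binom di c_i\omega^i\chi^{d-i})\chi^{n-d}=c_t\,\kappa^n
\]
along a continuity path that adds $t\kappa$. The analytic existence comes from Lin's theorem \cite{Lin} once a subsolution is available. The subsolution is produced by induction on dimension: over each subvariety $V$, the numerical condition with $p$ replaced gives, by the inductive hypothesis applied on a resolution of $V$, local forms in the cone near $V$. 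These are glued with a regularized maximum (the Demailly--Păun/Chen concentration-of-mass argument) along the degeneracy locus as $t$ decreases. The key difficulty is showing that the set of $t$ for which the cone is nonempty is closed. This uses mass concentration: a limit of solutions blows up along an analytic set $Z$, and the mixed inequality \eqref{eq:mixed} on components of $Z$ with the auxiliary $\kappa^{p-q}$ supplies the strict slack needed to extend. Here the fact that $\kappa$ is independent of $V$ is essential, because it allows the uniform constant $\delta_\kappa$ (obtained by compactness of the Barlet cycle space, or by the same argument) to be used.

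Finally, the range $0<\varepsilon<-f(r(f'))$ follows because the cone $\mathcal C_{f+\varepsilon}$ depends only on the root data of $f+\varepsilon$, which remains strictly right-Noetherian with the same $r(f')$ exactly while $f(r(f'))+\varepsilon<0$. For $d=1$ the condition is linear: $\omega-c_0\chi$ has positive integrals against $\kappa^{p-1}$ on all $V$, and (ii) follows directly from Demailly--Păun after shifting by $\varepsilon\chi$.
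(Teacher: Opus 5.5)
Your (ii)$\Rightarrow$(i) argument, including the uniform bound, matches the paper: $\Phi_q^f(\omega,\chi)=\Phi_q^{f+\varepsilon}(\omega,\chi)$ is strictly strongly positive for $n-d\le q<n$, compactness of the Grassmann bundle gives $\delta_\kappa$, and $[\omega]=[\alpha]$. The gaps are in (i)$\Rightarrow$(ii).

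First, by \eqref{eq:topzero} your normalizing constant $c_t$ vanishes at $t=0$. Also $[\alpha]\cap\Ccal_f(\chi)=\varnothing$, because $\Phi_n^f(\omega,\chi)>0$ pointwise on $\Ccal_f(\chi)$. So a continuity argument for $f$ itself can never close at $t=0$. The missing step is to replace $f$ at the outset by $g=f+\varepsilon$ with $0<\varepsilon<-f(r(f'))$. Then $g$ is strictly right-Noetherian, $\Phi_q^g=\Phi_q^f$ for $q<n$, and $\int_X\Phi_n^g(\alpha,\chi)=\varepsilon\int_X\chi^n>0$. The binomial expansion, together with the mixed inequalities on $V=X$, keeps the top integral positive along $[\alpha]+t[\kappa]$. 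The theorem then reduces to the positive-top-integral statement, Theorem~\ref{thm:positive}.

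Second, for $d<n$ the analytic input you cite is not available. Lin's theorem treats only degree-$n$ polynomials, and no existence theorem for your degree-$d$ twisted equation is at hand; the paper explicitly leaves this as a separate analytic problem. The paper never solves a degree-$d$ equation. It passes to $p_L(t)=(t+L)^{n-d}g(t)$, which has full degree, satisfies $\Ccal_{p_L}\subset\Ccal_g$, and contains any compact $B\Subset\Ccal_g$ once $L$ is large (Lemma~\ref{lem:geometry}). It solves Lin's equations for $p_L$ with right-hand sides concentrating on Tosatti's Lebesgue-null set. Via the depth estimate (Lemma~\ref{lem:depth}), Cauchy--Schwarz and a Lamari-type duality, it then obtains, at one finite $L$, a current $T$ in the limiting class with $(T-\delta\kappa)_{\ac}\in\overline{\Ccal_g(\chi)}$ a.e.\ and $T-\delta\kappa\ge-C\chi$.

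The analytic set in the argument is the Lelong superlevel set of this current (Siu). It is not a locus where limits of solutions blow up. The inductive local potential near it is glued to a regularization of $T$ by a regularized maximum. The inductive step on a resolution also needs more than you state: $\pi^*\chi$ degenerates, and the induced polynomial $\tfrac{e!}{d!}f^{(n-m)}$ has degree below $\dim W$ when $d<n$. This is what the perturbations $\xi+s\rho$, $\gamma+s^N\rho$ and $(t+L_s)^kg$ of Lemma~\ref{lem:quantlift} handle.

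Third, a uniform $\delta_\kappa$ cannot be extracted from the strict inequalities by compactness of the Barlet space. Cycles of unbounded volume do not form a compact family, and normalized cycle classes can accumulate where the functional vanishes. Mumford's strictly nef non-ample line bundles are an instance: there $\inf_C (L\cdot C)/(H\cdot C)=0$. Using the pointwise argument from (ii) instead would be circular. The paper needs no such constant. The margin $\delta<V_g/D_g$ involves only integrals over $X$, the local step uses strict positivity one subvariety at a time, and uniformity is obtained only afterwards.

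Fourth, your $d=1$ argument misidentifies the cone. Here $\Ccal_{f+\varepsilon}(\chi)$ is the trace condition $\tr_\chi\omega>n(c_0-\varepsilon)$, while the mixed inequalities reduce to the automatic $\int_V\kappa^{p-n+1}\chi^{n-1}>0$. So Demailly--P\u{a}un's hypotheses are unavailable, and its conclusion can fail. Instead, solve the Poisson equation $\tr_\chi(\alpha+\ddc u)=nc_0$, which \eqref{eq:topzero} makes solvable.

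Two smaller corrections:
\begin{itemize}
\item Only the full form $\Phi_q=\chi^{n-d}\wedge F_q$ is pointwise positive, not the truncated factor $F_q$. For $d=2<n$ that factor is $\omega-c_1\chi$, which need not be semipositive, so state your lower bound for $\Phi_q$ itself.
\item Membership in $\Ccal_{f+\varepsilon}$ is not a positivity condition on $\omega-r(f)\chi$. For $f=t^2-1$ and $n=2$, the eigenvalues $(4,\tfrac12)$ lie in $\Ccal_f$, yet $4-1>0$ while $\tfrac12-1<0$.
\end{itemize}
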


Theorem~\ref{thm:main} is the mixed K\"ahler analogue of
\cite[Theorem 1.2]{CNX}. It includes $f(t)=t^k-a$, with $a>0$,
even though the later derivative roots coincide when $k\ge3$.
For a general polynomial of degree less than $n$, the theorem
constructs forms in the required cone. Solvability of the
corresponding equation requires a separate analytic existence theorem.

\subsection{The critical LYZ equation}

\begin{theorem}[Critical LYZ equation]\label{thm:critical}
Suppose that
\[
 \int_X(\alpha+\sqrt{-1}\chi)^n\in\mathbb R_{<0}.
\]
The following conditions are equivalent.
\begin{enumerate}
\item There is a K\"ahler form $\kappa$ such that
\begin{equation}\label{eq:criticalstrict}
 \int_V\kappa^{p-q}\operatorname{Im}
       (\alpha+\sqrt{-1}\chi)^q>0,
 \qquad 1\le q\le\min\{p,n-1\},
\end{equation}
for every irreducible analytic subvariety $V\subset X$ of
dimension $p$, including $V=X$.
\item The class $[\alpha]$ contains a smooth closed form $\xi$
satisfying
\begin{equation}\label{eq:criticalsub}
 \sum_{i\ne j}\arctan\lambda_i(\chi^{-1}\xi)
       >(n-3)\frac\pi2,\qquad 1\le j\le n.
\end{equation}
\item There is a smooth closed form $\omega\in[\alpha]$ satisfying
\begin{equation}\label{eq:criticalPDE}
 \sum_{j=1}^n\arctan\lambda_j(\chi^{-1}\omega)
       =(n-2)\frac\pi2.
\end{equation}
\end{enumerate}
If these conditions hold, then for every K\"ahler form $\kappa$
there is a constant $\delta_\kappa>0$ such that
\begin{equation}\label{eq:criticaluniform}
 \int_V\kappa^{p-q}\operatorname{Im}
       (\alpha+\sqrt{-1}\chi)^q
 \ge\delta_\kappa\int_V\kappa^{p-q}\chi^q
\end{equation}
for all $V,p,q$ as in \textup{(i)}.
\end{theorem}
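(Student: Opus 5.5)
We prove Theorem~\ref{thm:critical} by reducing it to Theorem~\ref{thm:main}. The bridge is an explicit polynomial: at the critical phase the subsolution condition \eqref{eq:criticalsub} is a cone condition $\mathcal C_f(\chi)$ for a polynomial of degree $n-1$ that is strictly right-Noetherian. Fix a point and write $\lambda_j=\lambda_j(\chi^{-1}\xi)$.

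\textbf{Step 1: the algebraic identity.} Put
\[
 \operatorname{Im}\prod_j(\lambda_j+\sqrt{-1})=\prod_j\sqrt{1+\lambda_j^2}\,\sin\Big(\sum_j\arctan\lambda_j\Big).
\]
Consider the phase $\Theta=\sum_j\arctan\lambda_j$. The condition \eqref{eq:criticalsub} says that every partial sum omitting one index exceeds $(n-3)\pi/2$. This cone is the standard "$\Theta_{\hat j}>(n-3)\pi/2$" cone. On it, $\operatorname{Im}\prod_{i\ne j}(\lambda_i+\sqrt{-1})$ has the sign of $\sin$ evaluated on $((n-3)\pi/2,(n-1)\pi/2)$, after the appropriate rotation by $\sqrt{-1}^{\,n-1}$.

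Expanding $\operatorname{Im}(\alpha+\sqrt{-1}\chi)^{n-1}$ in powers of $\alpha$ produces a monic polynomial of degree $d=n-1$ up to sign: $f(t)=\pm\operatorname{Im}(t+\sqrt{-1})^{n-1}$, normalized so that $c_i$ matches \eqref{eq:polynomial}. Its roots are $\cot(k\pi/(n-1))$, which are real and simple. The roots of $f'$ are the analogous cotangents for degree $n-2$. We will check that the largest roots interlace strictly, which gives strict right-Noetherianity. We will also identify $\mathcal C_f(\chi)$, the component containing large multiples of $\chi$ where $f$ and its relevant derivatives are positive, with the cone \eqref{eq:criticalsub}. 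This is a pointwise linear-algebra statement. We prove it by induction on $n$, using that $\partial_{\lambda_j}$ of $\operatorname{Im}\prod(\lambda+\sqrt{-1})$ drops one factor.

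\textbf{Step 2: matching the numerical conditions.} With $d=n-1$, the brackets in \eqref{eq:mixed} become $\operatorname{Im}(\alpha+\sqrt{-1}\chi)^{q}$ up to a positive factor. Here we use the binomial structure of $\operatorname{Im}$, noting that the factor $\chi^{n-d}=\chi$ is absorbed as one power of the last term. The range $n-d\le q\le\min\{p,n-1\}$ becomes $1\le q\le\min\{p,n-1\}$, which is exactly \eqref{eq:criticalstrict}.

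The identity \eqref{eq:topzero} must also hold. The hypothesis $\int_X(\alpha+\sqrt{-1}\chi)^n\in\mathbb R_{<0}$ gives only $\int\operatorname{Im}(\alpha+\sqrt{-1}\chi)^n=0$, which is a degree-$n$ relation. To get around this, we use the $\varepsilon$-flexibility of Theorem~\ref{thm:main}, or apply it on the $n$-dimensional level with $f=\operatorname{Im}(t+\sqrt{-1})^n$ of degree $n$. The latter is cleaner: the degree-$n$ polynomial has top identity \eqref{eq:topzero} precisely by the hypothesis, and its $\mathcal C_{f+\varepsilon}$ cone for small $\varepsilon$ is the subsolution cone \eqref{eq:criticalsub}. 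The sign condition $\mathbb R_{<0}$ selects the correct branch, namely $\Theta=(n-2)\pi/2$ rather than another multiple of $\pi$. Theorem~\ref{thm:main} then yields (i)$\Leftrightarrow$(ii), together with the uniform bound \eqref{eq:criticaluniform}.

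\textbf{Step 3: the analytic equivalence.} For (ii)$\Rightarrow$(iii) we invoke the existence theorem of Fu--Yau--Zhang \cite{FYZ} for the critical phase under a subsolution, together with the cohomological normalization. For (iii)$\Rightarrow$(ii), a solution of \eqref{eq:criticalPDE} is itself a strict subsolution. Dropping any $\arctan\lambda_j<\pi/2$ leaves a sum strictly greater than $(n-2)\pi/2-\pi/2$.

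\textbf{Main obstacle.} The main obstacle is Step 1: identifying the connected component $\mathcal C_{f+\varepsilon}(\chi)$ with \eqref{eq:criticalsub}, and verifying strict right-Noetherianity with the sign normalization. I would first handle this by diagonalizing and reducing to the real-root cotangent description, then checking that the largest roots of $f$ and $f'$ are $\cot(\pi/n)>\cot(\pi/(n-1))$.
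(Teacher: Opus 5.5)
Your overall route is the paper's: apply Theorem~\ref{thm:main} to the critical polynomial, then invoke \cite{FYZ}. However, two steps fail as written.

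First, the degree count is off. Since $\operatorname{Im}(t+\sqrt{-1})^m=mt^{m-1}-\binom m3t^{m-3}+\cdots$ has degree $m-1$, the polynomial $\operatorname{Im}(t+\sqrt{-1})^{n-1}$ has degree $n-2$, and $\operatorname{Im}(t+\sqrt{-1})^n$ has degree $n-1$, not $n$. The right polynomial is $f_{\mathrm{cr}}=\frac1n\operatorname{Im}(t+\sqrt{-1})^n$, which is monic of degree $d=n-1$ and satisfies $\Phi_q^{f_{\mathrm{cr}}}(\alpha,\chi)=\frac1q\operatorname{Im}(\alpha+\sqrt{-1}\chi)^q$ for all $1\le q\le n$. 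Your final root check $\cot(\pi/n)>\cot(\pi/(n-1))$ is the correct one for this $f$. With this choice, the first paragraph of your Step~2 is right. Moreover, the top identity \eqref{eq:topzero} reads $\frac1n\int_X\operatorname{Im}(\alpha+\sqrt{-1}\chi)^n=0$, which is exactly the hypothesis. The obstacle you describe comes only from the wrong polynomial. Your workaround of applying Theorem~\ref{thm:main} ``at degree $n$'' is not meaningful, because this polynomial is not monic of degree $n$. The sign condition $\mathbb R_{<0}$ plays no role in the numerical part; it is used only to fix $\arg Z=\pi$ when invoking \cite{FYZ}.

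Second, and more seriously, the cone identification you plan to prove in Step~1 is false. Write $\theta_i=\operatorname{arccot}\lambda_i\in(0,\pi)$. Then $\partial_jF=\frac1n\prod_{i\ne j}\sqrt{1+\lambda_i^2}\,\sin\bigl(\sum_{i\ne j}\theta_i\bigr)$. Hence \eqref{eq:criticalsub} is the derivative cone $\mathcal D_{f_{\mathrm{cr}}}=\mathcal D_{f_{\mathrm{cr}}+\varepsilon}$, not $\mathcal C_{f_{\mathrm{cr}}}$ or $\mathcal C_{f_{\mathrm{cr}}+\varepsilon}$. For example, take $n=3$ and $\lambda_1=\lambda_2=\lambda_3=\eta>0$. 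Then \eqref{eq:criticalsub} holds, but $F(\lambda)=\eta^2-\frac13$, so $\lambda\notin\mathcal C_{f_{\mathrm{cr}}+\varepsilon}$ once $\eta^2<\frac13-\varepsilon$.

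Only the inclusion $\mathcal C_{f_{\mathrm{cr}}+\varepsilon}\subset\mathcal D_{f_{\mathrm{cr}}}$ holds. That inclusion gives (i)$\Rightarrow$(ii). But Theorem~\ref{thm:main} then says nothing about (ii)$\Rightarrow$(i), because a subsolution need not lie in any $\mathcal C_{f_{\mathrm{cr}}+\varepsilon}$.

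The missing piece is the paper's direct positivity argument. Let $\xi\in\mathcal D_{f_{\mathrm{cr}}}(\chi)$ and $1\le|I|<n$, and choose $j\notin I$. Then $0<\sum_{i\in I}\theta_i\le\sum_{i\ne j}\theta_i<\pi$, so $\operatorname{Im}\prod_{i\in I}(\lambda_i+\sqrt{-1})>0$ for every such $I$. By \eqref{binomial}, the forms $\operatorname{Im}(\xi+\sqrt{-1}\chi)^q$ with $1\le q<n$ are therefore strictly strongly positive. Wedge with $\kappa^{p-q}$, minimize the ratio to $\kappa^{p-q}\chi^q$ over the compact Grassmann bundles, and integrate over $V$. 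This gives \eqref{eq:criticaluniform}, and hence (i).

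Alternatively, you can pass through (iii). A solution has $\sum_j\theta_j=\pi$, so $F=0$ while all nonzero partials of $F$ are positive. By \eqref{eq:diagonalcomponent}, this places the solution in $\mathcal C_{f_{\mathrm{cr}}+\varepsilon}$ for every $\varepsilon>0$, and Theorem~\ref{thm:main} then gives (i).

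Your Step~3 observation that a solution is itself a subsolution is correct. It supplies the implication (iii)$\Rightarrow$(ii), which the paper leaves implicit.
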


Thus strict mixed inequalities give a uniform lower bound, without
assuming one in advance. The implication from \textup{(ii)} to
\textup{(iii)} follows from the critical existence theorem of
Fu, Yau, and Zhang \cite[Theorem 1.1]{FYZ}. The geometric part is
the construction of a strict subsolution from \textup{(i)}.
The critical equation corresponds to the polynomial
$n^{-1}\operatorname{Im}(t+\sqrt{-1})^n$, whose degree is $n-1$.
It therefore falls within Theorem~\ref{thm:main}.

\subsection{The Hessian and Hessian quotient equations}

A real $(1,1)$-form is called $k$-positive if its eigenvalues with
respect to $\chi$ belong to
$\Gamma_k=\{\lambda:\sigma_j(\lambda)>0,\ 1\le j\le k\}$.
The analytic existence theorems for the complex Hessian equation
in a $k$-positive class are due to Sun and Sz\'ekelyhidi
\cite{Sun,Szekelyhidi}. For Hessian quotient equations, we use
\cite[Corollary 3 and Proposition 22]{Szekelyhidi}.

\begin{theorem}[Hessian quotient equation]\label{thm:quotient}
Let $1\le\ell<k\le n$, suppose
$\int_X\alpha^\ell\chi^{n-\ell}\ne0$, and set
\[
 C=\frac{\int_X\alpha^k\chi^{n-k}}
         {\int_X\alpha^\ell\chi^{n-\ell}}>0.
\]
The following conditions are equivalent.
\begin{enumerate}
\item There is a K\"ahler form $\kappa$ such that
\begin{equation}\label{eq:quotmixed}
 \int_V\kappa^{p-q}
 \left(\frac{k!}{(k-n+q)!}\alpha^{k-n+q}\chi^{n-k}
 -C\frac{\ell!}{(\ell-n+q)!}
       \alpha^{\ell-n+q}\chi^{n-\ell}\right)>0
\end{equation}
for every irreducible analytic $V\subset X$ of dimension $p$,
including $X$, and $n-k\le q\le\min\{p,n-1\}$.
The second term is omitted when $q<n-\ell$.
\item There is a smooth closed $k$-positive form $\omega\in[\alpha]$
satisfying
\begin{equation}\label{eq:quotPDE}
 \omega^k\chi^{n-k}=C\omega^\ell\chi^{n-\ell}.
\end{equation}
\end{enumerate}
If $[\alpha]$ already contains a smooth closed $k$-positive form,
these conditions are also equivalent to
\begin{equation}\label{eq:quotendpoint}
 \int_V\left(\frac{k!}{(k-n+p)!}\alpha^{k-n+p}\chi^{n-k}
 -C\frac{\ell!}{(\ell-n+p)!}
       \alpha^{\ell-n+p}\chi^{n-\ell}\right)>0
\end{equation}
for every irreducible analytic $V\subset X$ of dimension
$n-\ell\le p<n$.
\end{theorem}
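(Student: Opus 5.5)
The plan is to reduce Theorem~\ref{thm:quotient} to Theorem~\ref{thm:main} applied to the degree-$k$ polynomial
\[
 f(t)=t^k-\binom{k}{\ell}^{-1}\!\cdot\! C'\,t^\ell,
\]
suitably normalized. Using $\binom kj\int\alpha^j\chi^{n-j}$-type expansions, the equation $\omega^k\chi^{n-k}=C\omega^\ell\chi^{n-\ell}$ should correspond to $\sigma_k(\lambda)=C'\sigma_\ell(\lambda)$ with $C'=C\,\ell!(n-\ell)!/(k!(n-k)!)$. The difficulty is that the forms in Theorem~\ref{thm:main} are compared through $\chi^{n-d}$ wedge products, so I would instead use the polynomial $f$ in \eqref{eq:polynomial} with $d=k$, $c_\ell=C\ell!/(k!\binom k\ell)\cdot(\text{normalization})$ and all other $c_i=0$.

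The normalization is chosen so that the bracket in \eqref{eq:mixed}, after computing $\binom{k-n+q}{\ell-n+q}$ against $k!/(k-n+q)!$ and $\ell!/(\ell-n+q)!$, is a positive multiple (depending only on $q$) of the integrand in \eqref{eq:quotmixed}. With this choice, \eqref{eq:topzero} is exactly the definition of $C$. Indeed, the factor $\frac{k!}{(k-n+q)!}\binom{k-n+q}{i}$ with $i=\ell-n+q$ equals $\frac{k!}{i!(k-\ell)!}$, and matching it with $C\ell!/i!$ gives $c$ independent of $q$. So condition (i) of Theorem~\ref{thm:quotient} coincides with (i) of Theorem~\ref{thm:main}.

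Next I check that $f(t)=t^k-\binom k\ell c\,t^\ell$ with $c>0$ is strictly right-Noetherian: the largest root of $f$ is $(\binom k\ell c)^{1/(k-\ell)}$, which strictly exceeds the largest root of $f'$, and the further derivative roots are harmless. For $k=1$ the theorem needs no root condition. Theorem~\ref{thm:main} then yields, for small $\varepsilon$, forms $\omega_\varepsilon\in[\alpha]\cap\mathcal C_{f+\varepsilon}(\chi)$. The key step, and the main obstacle, is to identify $\mathcal C_{f+\varepsilon}(\chi)$ (defined in Section~\ref{sec:cones}) with $k$-positive forms satisfying the strict subsolution (``$\mathcal C$-subsolution'') condition of Sz\'ekelyhidi for $\sigma_k/\sigma_\ell=C'$. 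Concretely, I would show that membership means that for every $q$-dimensional coordinate restriction the polynomial $\sigma_k-C'\sigma_\ell$ stays positive after sending any eigenvalue to $+\infty$, which is the same as
\[
 k\,\sigma_{k-1}(\lambda|j)>C'\,\ell\,\sigma_{\ell-1}(\lambda|j).
\]
Then \cite[Corollary 3 and Proposition 22]{Szekelyhidi} gives a $k$-positive solution $\omega$ of \eqref{eq:quotPDE}.

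For the converse (ii)$\Rightarrow$(i), a solution $\omega$ is in particular a subsolution in $\mathcal C_f$, and a small perturbation lands it in $\mathcal C_{f+\varepsilon}$; Theorem~\ref{thm:main}(ii)$\Rightarrow$(i) finishes. For the last equivalence, \eqref{eq:quotmixed} with $q=p$ is \eqref{eq:quotendpoint}, so (i) implies \eqref{eq:quotendpoint} after letting the $\kappa$-power be trivial. Conversely, given a $k$-positive form in $[\alpha]$, I would run the Demailly--P\u{a}un type induction implicit in Theorem~\ref{thm:main}, replacing the role of $\kappa$ by the $k$-positivity to recover the lower $q$ inequalities. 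Whether this step goes through, or instead needs a connectedness argument along $\alpha+s\chi$ as in the Demailly--P\u{a}un characterization, is the part I am least sure of.
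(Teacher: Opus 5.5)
\textbf{The gap is in the last equivalence.} The direction from (i) to \eqref{eq:quotendpoint} is fine: take $q=p$. For the converse you only suggest rerunning a Demailly--P\u{a}un induction or a connectedness argument, and you give no argument. The endpoint inequalities control only $q=p$. The machinery behind Theorem~\ref{thm:main} (in particular Proposition~\ref{lem:local}) needs the lower mixed inequalities $n-k\le q<p$ for one fixed $\kappa$ as input.

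The missing idea is that $k$-positivity of $[\alpha]$ lets you take $\kappa=\chi$ and deduce the lower inequalities by log-concavity. Write $u_i=\int_V\alpha^i\chi^{p-i}$, $s=k-n+p$, $h=k-\ell$ and $a=k-n+q$. Then $\int_V\chi^{p-q}\Psi_q=u_a-C\theta_q u_{a-h}$, and this is just $u_a$ when $q<n-\ell$. Lemma~\ref{lem:KT} shows that $u_0,\dots,u_s$ are positive and log-concave. Its proof pulls a $k$-positive representative back to a resolution of $V$, perturbs to an $s$-positive pair, applies Xiao's Khovanskii--Teissier type inequalities, and then removes the perturbation.

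Log-concavity gives $u_a/u_{a-h}\ge u_s/u_{s-h}$ for $a\le s$, and $\theta_q$ is increasing in $q$. So the endpoint bound $u_s/u_{s-h}>C\theta_p$ propagates to $u_a/u_{a-h}>C\theta_q$ for every $q\le p$. On $X$ itself one uses $u_k/u_\ell=C$ and $\theta_q<1$ for $q<n$; this is Lemma~\ref{lem:quotientmixed}. Together these give (i) with $\kappa=\chi$. Without this inequality, or an equivalent one, your final equivalence does not close.

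\textbf{Two smaller points.}

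First, in (ii)$\Rightarrow$(i), a solution is not in $\mathcal C_f(\chi)$. It satisfies $\Phi_n^f(\omega,\chi)=0$, so it lies on the boundary, and ``a small perturbation'' is not what is needed. What you need is strict positivity of every $\Psi_q(\omega,\chi)$ for $n-k\le q<n$. In eigenvalues this reads $\sigma_{k-m}(\lambda|J)>C'\sigma_{\ell-m}(\lambda|J)$ for $|J|=m\ge1$, with the second term dropped when $m>\ell$. It follows from $\sigma_k(\lambda)=C'\sigma_\ell(\lambda)$ by iterating $\partial_j(\sigma_a/\sigma_b)>0$ on $\Gamma_a$, applied to $\lambda|J\in\Gamma_{k-m}$. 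You can then integrate against $\kappa^{p-q}$ directly, as the paper does; this also places $\omega$ in $\mathcal C_{f+\varepsilon}(\chi)$ for every $\varepsilon>0$ with no perturbation.

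Second, your displayed subsolution condition has the wrong constants. The form inequality $k\omega^{k-1}\wedge\chi^{n-k}>C\ell\,\omega^{\ell-1}\wedge\chi^{n-\ell}$, which is what Sz\'ekelyhidi's Corollary 3 uses, is equivalent to $\sigma_{k-1}(\lambda|j)>C'\sigma_{\ell-1}(\lambda|j)$. That is what your own ``send one eigenvalue to $+\infty$'' description gives.

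Apart from these points, your (i)$\Rightarrow$(ii) argument is the paper's, with the same polynomial $f(t)=t^k-Ct^\ell$ and the same appeal to Theorem~\ref{thm:main} and Sz\'ekelyhidi's Corollary 3. The inclusion of $\mathcal C_{f+\varepsilon}(\chi)$ into the $k$-positive subsolutions is not a real obstacle. It follows from \eqref{binomial} with $q=n-1$ together with \eqref{eq:shiftGamma}.
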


The last assertion is the numerical criterion proposed by
Sz\'ekelyhidi \cite{Szekelyhidi}. In particular, the mixed inequalities imply that $[\alpha]$
contains a smooth closed $k$-positive form. This is not assumed
in the equivalence of \textup{(i)} and \textup{(ii)}.

\begin{theorem}[Complex Hessian equation]\label{thm:hessian}
Let $1\le k\le n$ and suppose $\int_X\alpha^k\chi^{n-k}>0$.
The following conditions are equivalent.
\begin{enumerate}
\item There is a K\"ahler form $\kappa$ such that
\begin{equation}\label{eq:puremixed}
 \int_V\kappa^{p-q}\alpha^{k-n+q}\chi^{n-k}>0,
 \qquad n-k\le q\le\min\{p,n-1\},
\end{equation}
for every irreducible analytic $V\subset X$ of dimension $p$,
including $X$.
\item For every smooth real function $F$ satisfying
$\int_Xe^F\chi^n=\int_X\alpha^k\chi^{n-k}$, there is a smooth
closed $k$-positive form $\omega\in[\alpha]$ satisfying
\begin{equation}\label{eq:purePDE}
 \omega^k\wedge\chi^{n-k}=e^F\chi^n.
\end{equation}
\end{enumerate}
\end{theorem}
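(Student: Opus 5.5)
The plan is to deduce Theorem~\ref{thm:hessian} from Theorem~\ref{thm:main} applied to the monomial-type polynomial $f(t)=t^k-a$, together with the Sun--Sz\'ekelyhidi existence theorem for the complex Hessian equation in a $k$-positive class \cite{Sun,Szekelyhidi}.

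First, normalize. Set
\[
 a=\frac{\int_X\alpha^k\chi^{n-k}}{\int_X\chi^n}>0,
 \qquad f(t)=t^k-a .
\]
In the notation \eqref{eq:polynomial} we have $d=k$, $c_0=a$ and $c_i=0$ for $1\le i\le k-1$. The identity \eqref{eq:topzero} is then exactly the definition of $a$.

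For $k\ge2$, $f$ is strictly right-Noetherian. Its largest root is $a^{1/k}>0$, while the largest root of $f'$ is $0$, and the remark after Theorem~\ref{thm:main} explicitly covers $t^k-a$. The case $k=1$ needs no root condition.

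Next, specialize the mixed inequalities. With these coefficients, \eqref{eq:mixed} reads
\[
 \int_V\kappa^{p-q}\chi^{n-k}\alpha^{k-n+q}
 -a\,\mathbf 1_{\{q=n-k+0\}}\cdots .
\]
The subtracted sum only involves $c_{n-q+i}$ with $n-q+i\le k-1$. Since $c_j=0$ for $j\ge1$, only $c_0$ could appear, and it does so only when $n-q+i=0$, which is impossible because $q\le n-1$. Hence \eqref{eq:mixed} is precisely \eqref{eq:puremixed}. It follows that (i) of Theorem~\ref{thm:hessian} is equivalent to (i) of Theorem~\ref{thm:main}.

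For (i)$\Rightarrow$(ii), Theorem~\ref{thm:main} produces, for $0<\varepsilon<-f(0)=a$, a smooth form $\omega_\varepsilon\in[\alpha]\cap\mathcal C_{f+\varepsilon}(\chi)$. The main obstacle is to show that membership in $\mathcal C_{t^k-(a-\varepsilon)}(\chi)$ yields a $k$-positive form. This is also a $\mathcal C$-subsolution in Sz\'ekelyhidi's sense for $\sigma_k(\lambda)=\binom nk b\,e^F$, with $\omega^k\chi^{n-k}$ normalized appropriately.

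I would unwind the cone definition from Section~\ref{sec:cones}. The expected characterization is that the eigenvalues lie in $\Gamma_k$, and that for every $(n-k+1)$-tuple of deleted directions or restrictions to subspaces, the relevant partial $\sigma$ dominates $a-\varepsilon$. Concretely, the plan is to check that the cone condition says $\lambda\in\Gamma_k$ together with the subsolution inequality
\[
 \lim_{t\to\infty}\sigma_k(\lambda+te_i)>c\quad\text{for each } i,
\]
that is, $\sigma_{k-1}(\lambda|i)>0$.

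For the Hessian equation the $\mathcal C$-subsolution condition is automatic for any $k$-positive form. Indeed, $\sigma_k(\lambda+te_i)\to\infty$ for $k\ge2$, because $\sigma_{k-1}(\lambda|i)>0$ on $\Gamma_k$. So only $k$-positivity is needed. The case $k=1$ is trivial: $\omega_\varepsilon$ has positive trace, and the equation is linear.

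Given a smooth $k$-positive $\omega_0\in[\alpha]$ and any $F$ with $\int_Xe^F\chi^n=\int_X\alpha^k\chi^{n-k}$, the existence theorem of \cite{Sun,Szekelyhidi} yields $\omega\in[\alpha]$, $k$-positive, solving \eqref{eq:purePDE}. The volume normalization ensures that the constant found there equals $1$.

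For (ii)$\Rightarrow$(i), take $F$ constant, $e^F=a$. This gives a $k$-positive $\omega\in[\alpha]$ with $\omega^k\chi^{n-k}=a\chi^n$. Then $\omega$ lies in $\mathcal C_{f+\varepsilon}(\chi)$ for every small $\varepsilon>0$, since the strict inequality $a>a-\varepsilon$ leaves room. Theorem~\ref{thm:main} (ii)$\Rightarrow$(i) then applies.

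Alternatively, one can verify \eqref{eq:puremixed} directly, using $\omega$ in place of $\alpha$. The integrand $\kappa^{p-q}\omega^{k-n+q}\chi^{n-k}$ restricted to $V$ is positive by G\aa rding-type positivity of $k$-positive forms wedged with K\"ahler forms. The degree count $(p-q)+(k-n+q)+(n-k)=p$ is consistent, and the integrand is strictly positive on the regular part of $V$.

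In both directions, the delicate point is matching the abstract cone $\mathcal C_{f+\varepsilon}$ with $\Gamma_k$. If that identification fails, I would fall back on the direct G\aa rding argument for (ii)$\Rightarrow$(i). For (i)$\Rightarrow$(ii), I would extract $k$-positivity from the fact that $\mathcal C_f$ is contained in the $\lambda$ with all derivatives $f^{(j)}$ positive at the roots, which for $t^k-a$ gives $\sigma_j>0$ for $j\le k$.
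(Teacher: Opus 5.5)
Your argument is correct, but it takes a detour that the paper avoids.

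\textbf{The paper's route.} The paper applies Theorem~\ref{thm:positive} directly to $g(t)=t^k$. This polynomial is right-Noetherian. All its derivative roots equal $0$, so it is not strictly right-Noetherian, but Theorem~\ref{thm:positive} does not require strictness. Its top integral is $\int_X\alpha^k\chi^{n-k}>0$, and its mixed forms are $\Phi_q^g(\alpha,\chi)=\alpha^{k-n+q}\chi^{n-k}$. Since $\mathcal C_{t^k}=\Gamma_k$, the output is already a $k$-positive representative, and the Sun--Sz\'ekelyhidi theorem finishes. The converse is the pointwise positivity argument you give as your alternative.

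\textbf{Your route.} You instead invoke Theorem~\ref{thm:main} for $f=t^k-a$. That theorem is itself proved by applying Theorem~\ref{thm:positive} to $f+\varepsilon=t^k-(a-\varepsilon)$. So you end up using the same machinery with top integral $\varepsilon\int_X\chi^n$ in place of $\int_X\alpha^k\chi^{n-k}$. The price is two extra checks: strict right-Noetherianity, which you do correctly, and the inclusion $\mathcal C_{t^k-b}\subset\Gamma_k$ for $b>0$.

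\textbf{The step you flagged.} The inclusion you called the main obstacle is not delicate. Your heuristic description of the cone, with partial $\sigma$'s dominating $a-\varepsilon$, is not accurate. The constant $b$ enters only through the polarized polynomial itself, and in fact $\mathcal C_{t^k-b}=\Gamma_k\cap\{\sigma_k>b\binom nk\}$. The inclusion follows in either of two ways:
\begin{enumerate}
\item from \eqref{eq:shiftGamma} with $c_{k-1}=0$ when $k\ge2$;
\item from connectedness: $\mathcal C_{t^k-b}$ is a connected subset of $\{\sigma_k>0\}$ containing $R\mathbf 1$, hence lies in the component $\Gamma_k$.
\end{enumerate}

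\textbf{Comparison.} Your route uses only the theorem stated in the introduction as a black box. The converse also comes from Theorem~\ref{thm:main}: a solution with $e^F\equiv a$ lies in $\Gamma_k\cap\{\sigma_k=a\binom nk\}\subset\mathcal C_{f+\varepsilon}(\chi)$ for every $0<\varepsilon<a$. The paper's route is shorter and needs no identification of the cone.
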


We also prove that the cone of $k$-positive classes is the connected
component containing $[\chi]$ of the classes satisfying
\[
 \int_V\alpha^{k-n+p}\chi^{n-k}>0,
 \qquad n-k+1\le p=\dim V\le n.
\]
See Theorem~\ref{thm:component}. This gives the corresponding
criterion along the classes $[\alpha]+t[\kappa]$, $t\ge0$,
and proves the path criterion formulated by Murakami
\cite[Conjecture 1.5(1)]{Murakami}. When $k=n$, the component
description is the theorem of Demailly and P\u{a}un \cite{DP}.

\subsection{Outline of the proof}

The main step is a geometric existence theorem for a
right-Noetherian polynomial $f$ of degree $n$ with positive top integral.
We show that the mixed inequalities imply the existence of a
smooth closed form in the associated cone. The case of zero top
integral follows by replacing $f$ with $f+\varepsilon$.
This changes only the top integral, and the strict first root gap
preserves right-Noetherianity for small $\varepsilon>0$.

We start from the continuity method. At each continuity endpoint,  we use the mass concentration  motivated by the argument in Tosatti's remarkble work~\cite{Tosatti}.  
Tosatti's key observation
is that every K\"ahler class $[\kappa]$ contains a closed positive current
supported on a closed set $\Sigma$ of Lebesgue measure zero
\cite[Proposition~2.1]{Tosatti}. Since this set has Lebesgue measure zero,  all mass of any positive
current on $\Sigma$ comes from its singular part.
 We apply a distance estimate for the polynomial cone  to get a lower
bound for the singular part of the limiting current.
Then by a Lamari type duality, we obtain a current $T\in [\alpha]$ with $(T-\delta\kappa)_{\ac}\in\overline{\Ccal_f(\chi)}\;{a.e.}$ for a constant $\delta>0$.

For a polynomial $f$ of degree $d<n$, we apply the mass concentration argument to
$p(t)=(t+L)^{n-d}f(t)$ with $L$ sufficiently large.
The cone inclusion lemma in  \cite{CNX} allow us to
pass between the two polynomials. We keep one finite value of $L$
when constructing the current in a limiting class. This retains
a smooth lower bound needed for regularization.  The key ingredient is the current constructed in
Proposition~\ref{prop:endpoint}. 
Mass concentration provides that  there exists a current $T\in[\alpha]$ with $(T-\delta\kappa)_{\ac}\in \overline{C_{f}(\chi)}$ and $T\geq -C\chi$ needed for gluing. The constant $\delta$ will be chosen independent of $L$. Moreover, this step requires no uniform numerical lower bound in~\cite[Theorem~1.2, (3)]{CNX}, this is why we do not need the lower bound here.

We then regularize local potentials and glue them to smooth
potentials near analytic subsets. This uses the regularization
theory of Demailly and Richberg
\cite{DemaillyRegularization,Richberg,BlockiKolodziej}, together
with induction on the dimension. The local extension argument
is related to those of Demailly-P\u{a}un. Collins-Tosatti and Chen
\cite{DP,CollinsTosatti, Chen}. A continuity argument along
$[\alpha]+t[\kappa]$ completes the geometric existence theorem.
Finally, intersection inequalities on resolutions of subvarieties
give the component description for $k$-positive classes and reduce
the Hessian quotient criterion to the proper subvariety inequalities.

Section~\ref{sec:cones} contains the polynomial definitions and
the analytic existence theorem. Sections~\ref{sec:currents}
and~\ref{sec:concentration} concern convex duality and mass
concentration. Sections~\ref{sec:gluing}--\ref{sec:global}
prove local extension and geometric existence. The remaining
sections treat the Hessian equations, their numerical components,
and the critical LYZ equation.

\section{Preliminary}\label{sec:cones} Let $(X,\chi)$ be a compact K\"ahler manifold of complex dimension $n$.
Let $\Lambda_{\mathbb R}^{1,1}(X)$ denote the space of real
$(1,1)$-forms on $X$. For $\xi\in\Lambda_{\mathbb R}^{1,1}(X)$,
we write $\xi\ge0$ if $\xi$ is semipositive. Write $\sigma_j$ for the elementary symmetric polynomials,
with $\sigma_0=1$. 
For $1\le d\le n$, denote
\[
 \Gamma_d(\chi)
 :=
 \left\{
 \xi\in\Lambda_{\mathbb R}^{1,1}(X):
 \sigma_j\bigl(\lambda(\chi^{-1}\xi)\bigr)>0,
 \quad 1\le j\le d
 \right\},
\]
where $\lambda(\chi^{-1}\xi)$ denotes the vector of eigenvalues of $\xi$
with respect to $\chi$. In particular, $\Gamma_n(\chi)$ is the cone of
positive real $(1,1)$-forms.   The cone of classes containing a smooth closed
k-positive form is \[
\mathcal K_{k,\chi}
=\bigl\{[\alpha]\in H^{1,1}(X,\mathbb R):
\text{there exists a smooth closed }
\omega\in[\alpha]\text{ with }\omega\in\Gamma_k(\chi)\bigr\}.
\]

More generally, given a positive $(1,1)$-form
$\chi$ and a unitarily invariant set $K\subset\Herm(n)$, we write
$\xi\in K(\chi)$ if the matrix of $\xi$ in a $\chi$-unitary frame belongs
to $K$ at every point of $X$. This condition is independent of
the choice of frame.

\subsection{Polynomial notation and mixed intersection numbers}
\mbox{}\par
\vspace{0.5\baselineskip}
\noindent
We first recall the following root conditions from  \cite{Lin}. 
\begin{definition}
Suppose that $f$ is a  real univariate polynomial of degree \(d\). We associate with \(f\) the following root sequence. For \(0\leq j\leq d-1\), denote the largest real root of \(f^{(j)}\), when it exists, by $r(f^{j})$.  We say \(f\) is right-Noetherian if  
\[
r(f)\geq r(f')\geq\cdots\geq r\bigl(f^{(d-1)}\bigr).
\]
 We say \(f\) is strictly right-Noetherian if
\[
r(f)>r(f')\geq\cdots\geq r\bigl(f^{(d-1)}\bigr).
\
\]
\end{definition}
The polynomial $t^d-a$ with $a>0$  is strictly right-Noetherian, although
its later derivative roots coincide.
Set $r_1=r(f')$. Since $f$ is strictly increasing to the right
of $r_1$, we have
\begin{equation}\label{eq:firstgapvalue}
 r(f)>r_1\quad\Longleftrightarrow\quad f(r_1)<0.
\end{equation}
Consequently, if the first gap is strict, then $f+\varepsilon$
is strictly right-Noetherian for
$0<\varepsilon<-f(r_1)$.
If $f$ is merely right-Noetherian, then $f(r_1)\le0$,
so $f-\varepsilon$ has a strict first gap for any
$\varepsilon>0$.

 Let \(f\) be as in \(\eqref{eq:polynomial}\). We define \(F=\operatorname{Pol}_n(f)\) as the unique symmetric polynomial of total degree \(d\) in \(n\) variables that is affine in each variable and satisfies 
\[
F(t,\ldots,t)=f(t).
\]

For each nonzero partial derivative $P$ of
$F$,  including $P=F$, we denote by  $\Upsilon_P$ the connected
component of
\[
 \{\lambda\in\mathbb R^n:P(\lambda)>0\}
\]
that contains $R\mathbf1$ for all sufficiently large $R$, where
$\mathbf1=(1,\ldots,1)$. If $P$ is a positive constant,  set
$\Upsilon_P=\mathbb R^n$.
Since \(P\) is affine in each variable,
\begin{equation}\label{eq:diagonalcomponent}
	P(\lambda+t\mathbf1)
	=
	\sum_{I\subset{1,\ldots,n}}
	t^{|I|}\partial_IP(\lambda).
\end{equation}
If $P$ and all its nonzero partial derivatives are positive
at $\lambda$, the expression above is positive for $t\ge0$.
Therefore, for large $R$, the segment joining
	$\lambda+t\mathbf1$ to $R\mathbf1$ lies in ${P>0}$.
	Hence $\lambda\in\Upsilon_P$. 
    
 Now suppose that \(f\) is right-Noetherian. Its polarization \(F\) has the Fang-Ma Gårding structure by \cite[Theorem 8.2]{FangMaGarding}.
 In particular, for every
nonzero partial derivative $P$ of $F$ and every $j$, we have
\(\Upsilon_P\subset\Upsilon_{\partial_jP}.\) Together with the preceding argument, this gives
\[
\lambda\in\Upsilon_P
\quad\Longleftrightarrow\quad
\partial_IP(\lambda)>0
\quad\text{whenever }\partial_IP\not\equiv0.
\]
 We write
\[
 \mathcal C_f=\Upsilon_F,
 \qquad
 \mathcal D_f=\bigcap_{j=1}^n\Upsilon_{\partial_jF}.
\]
The set $\mathcal D_f$ is the first derivative cone
$\Upsilon_F^1$ of \cite[Definition 2.14]{CNX}. When $d=n$,
it agrees with Lin's $\Upsilon_1$-cone
\cite[Definition 2.6]{LinConvexity}. The convention introduced above defines \(\mathcal C_f(\chi)\) and \(\mathcal D_f(\chi)\) for every positive real \((1,1)\)-form \(\chi\), which consists of the real \((1,1)\)-forms \(\xi\) whose eigenvalue vectors \(\lambda(\chi^{-1}\xi)\) lie in \(\mathcal C_f\) and \(\mathcal D_f\), respectively, at every point.

 For real $(1,1)$-forms
$\eta$ and $\xi$, define $\Phi_{n-d}^f(\eta,\xi)=\xi^{n-d}$ and
\begin{equation}\label{eq:Phi}
 \Phi_{n-d+j}^f(\eta,\xi)
 =\xi^{n-d}\left(\eta^j-
       \sum_{i=0}^{j-1}\binom ji c_{d-j+i}\eta^i\xi^{j-i}\right),
 \qquad 1\le j\le d.
\end{equation}
For closed forms, the same notation denotes the corresponding
expression in their cohomology classes. We suppress the superscript
$f$ when the polynomial is fixed. Thus \eqref{eq:mixed} is
\[
 \int_V[\kappa]^{p-q}\Phi_q^f([\alpha],[\chi])>0,
 \qquad n-d\le q\le\min\{p,n-1\}.
\]
These inequalities also hold for $V=X$. The uniform version
of Theorem~\ref{thm:main} reads
\begin{equation}\label{eq:alluniform}
 \int_V[\kappa]^{p-q}\Phi_q^f([\alpha],[\chi])
 \ge\delta_\kappa\int_V\kappa^{p-q}\chi^q.
\end{equation}
In particular, $\kappa=\chi$ and $q=p<n$ give
\begin{equation}\label{eq:properuniform}
 \int_V\Phi_p^f([\alpha],[\chi])
       \ge\delta_\chi\int_V\chi^p,
 \qquad n-d\le p<n.
\end{equation}

 For the Hessian
equations, we use the abbreviations
\[
 V_j=\int_X\alpha^j\chi^{n-j},\qquad C=V_k/V_\ell,
\]
and
\begin{equation}\label{eq:psi}
 \Psi_q=
 \begin{cases}
 [\alpha]^{k-n+q}[\chi]^{n-k},&n-k\le q<n-\ell,\\[3pt]
 [\alpha]^{k-n+q}[\chi]^{n-k}
 -C\theta_q[\alpha]^{\ell-n+q}[\chi]^{n-\ell},&n-\ell\le q\le n,
 \end{cases}
\end{equation}
where
\[
 \theta_q=\frac{\ell!(k-n+q)!}{k!(\ell-n+q)!}
 =\prod_{j=0}^{k-\ell-1}\frac{k-n+q-j}{k-j},
 \qquad n-\ell\le q\le n.
\]
Multiplication by $k!/(k-n+q)!$ identifies the positivity of
$\int_V[\kappa]^{p-q}\Psi_q$ with \eqref{eq:quotmixed}.

\subsection{Binomial expansion  and formal derivative}
Let $\gamma>0$. We choose a $\gamma$-unitary frame
in which $\xi$ is diagonal, with eigenvalues $\lambda_1,\ldots,\lambda_n$.
Then
\begin{equation}\label{eq:wedgepolar}
 \frac{\xi^j\gamma^{n-j}}{\gamma^n}
 =\frac{\sigma_j(\lambda)}{\binom nj},
 \qquad
 \frac{\Phi_n^f(\xi,\gamma)}{\gamma^n}=\Pol_n(f)(\lambda).
\end{equation}

For a real $(1,1)$-form $\sigma$, the binomial theorem gives
\begin{equation}\label{eq:formtaylor}
 \Phi_n^f(\xi+t\sigma,\gamma)
 =\Phi_n^f(\xi,\gamma)+\sum_{j=1}^d\binom dj t^j\sigma^j\Phi_{n-j}^f(\xi,\gamma).
\end{equation}
More generally, we have
\begin{equation}\label{eq:derivativetaylor}
 \Phi_q^f(\xi+t\sigma,\gamma)
 =\sum_{j=0}^{d-n+q}\binom{d-n+q}{j}
 t^j\sigma^j\Phi_{q-j}^f(\xi,\gamma),
 \qquad n-d\le q\le n.
\end{equation}

For $c\in\mathbb R$, let $\widetilde f(t)=f(t+c)$.
Polarization commutes with translation, so
\begin{equation}\label{eq:translationforms}
	\Phi_q^{\widetilde f}(\xi-c\gamma,\gamma)
	=
	\Phi_q^f(\xi,\gamma),
	\qquad n-d\le q\le n.
\end{equation}

 We next express these forms in terms of the partial derivatives
	of $F$.
	At a fixed point, choose a $\gamma$-unitary coframe diagonalizing
	$\xi$ and write
	\[
	\gamma=\sum_{j=1}^n\beta_j,
	\qquad
	\xi=\sum_{j=1}^n\lambda_j\beta_j,
	\qquad
	\beta_j=\sqrt{-1}\,dz^j\wedge d\bar z^j.
	\]
	For $I\subset\{1,\ldots,n\}$, write
	\[
	\beta_I=\bigwedge_{j\in I}\beta_j,
	\qquad
	\partial_I=\prod_{j\in I}\frac{\partial}{\partial\lambda_j},
	\qquad
	I^c=\{1,\ldots,n\}\setminus I.
	\]
	Expanding the wedge products, we have (see \cite[Lemma 2.22]{CNX}) 
	\begin{equation}\label{binomial}
	\Phi_q^f(\xi,\gamma)
	=
	\frac{n!(d-n+q)!}{d!}
	\sum_{|I|=q}
	\partial_{I^c}F(\lambda)\,\beta_I,
	\qquad n-d\le q\le n.
	\end{equation}
	For right-Noetherian $f$, it follows that
	$\xi\in\mathcal C_f(\gamma)$ makes these forms strictly strongly
	positive for $n-d\le q\le n$.
	If $\xi\in\mathcal D_f(\gamma)$, the same conclusion holds for
	$n-d\le q<n$.

We now consider restrictions to complex subspaces.
Let $E\subset T_x^{1,0}X$ be a complex $p$-plane,
where $n-d<p\le n$, and define
\begin{equation}\label{eq:inducedpolynomial}
	g=\frac{(d-n+p)!}{d!}f^{(n-p)}.
\end{equation}
This polynomial is monic of degree $d-n+p$.
Its forms in ambient dimension $p$ satisfy
\begin{equation}\label{eq:inducedforms}
	\Phi_q^g(\xi|_E,\gamma|_E)
	=
	\Phi_q^f(\xi,\gamma)|_E,
	\qquad n-d\le q\le p.
\end{equation}
Suppose that $f$ is right-Noetherian and
	$\xi\in\mathcal C_f(\gamma)$.
	The forms $\Phi_q^f(\xi,\gamma)$ are strictly strongly positive,
	as are their restrictions to $E$ for $n-d\le q\le p$.
	By \eqref{eq:inducedforms}, these restrictions are
	$\Phi_q^g(\xi|_E,\gamma|_E)$, 
	and $\Pol_p(g)$ and all its nonzero partial derivatives
	are positive at the eigenvalues of $\xi|_E$ relative to $\gamma|_E$.
	It therefore shows
	$$\xi|_E\in\mathcal C_g(\gamma|_E).$$

\subsection{Geometry of the polynomial cones}
 In this subsection we collect the properties of the cone associated with  right-Noetherian polynomial. 
\begin{lemma}\label{lem:geometry}
Let $g(t)=t^d-\sum_{i<d}\binom di c_it^i$ be right-Noetherian,
with $1\le d\le n$. Then $\mathcal C_g$ is open, convex,
unitarily invariant, and stable under addition of semipositive
matrices. Its closure has interior $\mathcal C_g$, and
$\overline{\mathcal C_g}+\Gamma_n\subset\mathcal C_g$.
All nonzero partial derivatives of $\Pol_n(g)$ are positive on
$\mathcal C_g$. In particular, $\Phi_q^g(\xi,\chi)$ is
strictly strongly positive for $\xi\in\mathcal C_g(\chi)$
and $n-d\le q\le n$.
Denote $c_g:=c_{d-1}$, we have
\begin{equation}\label{eq:shiftGamma}
 \Ccal_g(\chi)-c_g\chi\subset\Gamma_d(\chi).
\end{equation}
If $d=n$, any element of the closure of the translated set
is semipositive. For $p_L(t)=(t+L)^{n-d}g(t)$ and any
compact $B\Subset\mathcal C_g$, all sufficiently large $L>0$
satisfy
\begin{equation}\label{eq:finiteLift}
 B\subset\mathcal C_{p_L}\subset\mathcal C_g.
\end{equation}
\end{lemma}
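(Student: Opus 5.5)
The plan is to derive every assertion of Lemma~\ref{lem:geometry} from the Fang--Ma G\aa rding structure of $G=\Pol_n(g)$ \cite[Theorem 8.2]{FangMaGarding} and from the characterization already established in this section: $\lambda\in\Upsilon_P$ if and only if $\partial_IP(\lambda)>0$ for every $\partial_IP\not\equiv0$.

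\textbf{Openness, invariance and semipositive stability.} Openness of $\mathcal C_g$ is immediate, since it is a finite intersection of strict polynomial inequalities. Unitary invariance holds because $G$ is symmetric, so the condition only involves the eigenvalues. For stability under semipositive matrices, the cone is first defined on eigenvalue vectors; a semipositive matrix shifts eigenvalues monotonically (Weyl), so it suffices to show that $\lambda+\mu\in\mathcal C_g$ for $\mu\ge0$. Each $\partial_IG$ is affine in each variable with positive derivatives on the cone, and hence is nondecreasing along coordinate directions as long as we stay inside the cone. A segment argument starting from $\lambda$ and increasing one coordinate at a time then keeps all $\partial_IG>0$.

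\textbf{Convexity.} This is the main input, and I will quote it directly from the G\aa rding theory: hyperbolicity of $G$ with respect to $\mathbf 1$ for right-Noetherian $g$ (Lin, Fang--Ma) makes $\Upsilon_G$ the hyperbolicity cone, which is convex. Convexity and openness give $\operatorname{int}\overline{\mathcal C_g}=\mathcal C_g$. The inclusion $\overline{\mathcal C_g}+\Gamma_n\subset\mathcal C_g$ follows because an open positive perturbation of a limit point lies in the interior of the closure. The strict strong positivity of $\Phi_q^g$ is then read off from \eqref{binomial}.

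\textbf{The shift \eqref{eq:shiftGamma}.} Translate by $c_g$ via \eqref{eq:translationforms}: replacing $g$ by $\widetilde g(t)=g(t+c_g)$ kills the $t^{d-1}$ coefficient. For $\widetilde G$ the derivatives of order $n-j$, for $j\le d$, reduce to multiples of $\sigma_j$ plus lower terms; the top ones give $\sigma_1>0$ exactly. I would then argue inductively via the Fang--Ma nesting $\Upsilon_P\subset\Upsilon_{\partial_jP}$ and the fact that $\Gamma_d$ is the G\aa rding cone of $\sigma_d$, comparing hyperbolicity cones of polynomials that share the same direction and lower-degree leading part. This is where I expect the real work: I would try to show that $\sigma_j\ge$ (positive combination of $\partial_I\widetilde G$) on the cone using the root ordering $r(\widetilde g^{(k)})\le r(\widetilde g^{(d-1)})=0$. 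When $d=n$, $\Gamma_n$ is the positive cone, so the closure of the shifted cone is semipositive.

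\textbf{Finite lift \eqref{eq:finiteLift}.} Here $p_L$ is right-Noetherian for large $L$, since the roots of $(t+L)^{n-d}$ sit far to the left. Its polarization is $\Pol_n(p_L)=\sum_k a_k(L)\,\sigma\text{-type symmetrizations of }(\lambda+L)\text{ against }\Pol(g)$, with leading term in $L$ proportional to $L^{n-d}G$ plus derivatives. Each partial derivative of $\Pol_n(p_L)$, normalized by the appropriate power of $L$, converges locally uniformly to a positive multiple of a partial derivative of $G$, or to a positive constant. On the compact set $B$ all nonzero $\partial_IG$ are bounded below, which gives $B\subset\mathcal C_{p_L}$. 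Conversely, the derivatives of $p_L$ of order $n-d$ give positive multiples of $G$, so $\mathcal C_{p_L}\subset\Upsilon_G$ by the nesting property.
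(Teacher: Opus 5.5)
\textbf{Gap in the convexity step.} This step rests on a false premise. $\Pol_n(g)$ is not hyperbolic with respect to $\mathbf 1$ for right-Noetherian $g$. Since $\Pol_n(g)(t\mathbf 1)=g(t)$, hyperbolicity would force every root of $g$ to be real. That fails for $g(t)=t^d-a$ with $a>0$ and $d\ge 3$, a case the paper explicitly covers. Nor is $\mathcal C_g$ a cone: for $n=d=3$ and $g=t^3-a$ it equals $\{\lambda\in\Gamma_3:\lambda_1\lambda_2\lambda_3>a\}$. That set is convex because $(\lambda_1\lambda_2\lambda_3)^{1/3}$ is concave, not because it is a hyperbolicity cone.

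The convexity result actually available, \cite[Theorem~3.1]{LinConvexity}, requires $g$ to be strictly right-Noetherian and of full degree $n$. So for $d<n$, or for a non-strict first gap, you must reduce to it. That reduction is the missing idea, and it is why the paper proves \eqref{eq:finiteLift} before convexity:
\begin{enumerate}
\item Given $A_0,A_1\in\mathcal C_g$, choose $L$ with $A_0,A_1\in\mathcal C_{p_L}$.
\item Choose $\varepsilon>0$ so small that $A_0,A_1\in\mathcal C_{p_L-\varepsilon}$. This is possible because the partial derivatives are unchanged and $\Pol_n(p_L)>\varepsilon$ at their eigenvalues.
\item The polynomial $p_L-\varepsilon$ is strictly right-Noetherian of degree $n$, so Lin's theorem makes its cone convex.
\item The inclusions $\mathcal C_{p_L-\varepsilon}\subset\mathcal C_{p_L}\subset\mathcal C_g$ then put the segment $[A_0,A_1]$ in $\mathcal C_g$.
\end{enumerate}
Your derivation of $\Int\overline{\mathcal C_g}=\mathcal C_g$ inherits this gap. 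The inclusion $\overline{\mathcal C_g}+\Gamma_n\subset\mathcal C_g$, by contrast, can be saved without convexity: write $A+P=A_k+(P+A-A_k)$ with $\mathcal C_g\ni A_k\to A$. Your treatment of openness, invariance, semipositive stability (the coordinatewise affine argument is fine), and the asymptotic proof of $B\subset\mathcal C_{p_L}$ is correct and matches the paper.

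\textbf{The remaining inclusions.} The other half of \eqref{eq:finiteLift} is argued incorrectly. First, $p_L^{(n-d)}$ is not a multiple of $g$: for $n=2$, $d=1$, $g=t-c$ one gets $p_L'=2(t-c)+(L+c)$. Second, $\partial_J\Pol_n(p_L)$ with $|J|=n-d$ depends only on the $d$ variables outside $J$, so nesting for $\Pol_n(p_L)$ never yields $\Upsilon_G$. Already in this toy case, $\mathcal C_{p_L}\subset\mathcal C_g$ amounts to an AM--GM inequality, not an identity. The paper takes this inclusion from \cite[Lemma~3.1]{CNX}.

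Likewise, \eqref{eq:shiftGamma} is only a plan. The nonconstant top-order partials of $\Pol_n(g(t+c_g))$ give only positivity of sums of $n-d+1$ coordinates. For $d<n$ that does not imply $\Gamma_d$: take $(1,1,-0.9)$ with $n=3$, $d=2$. So the full root ordering must enter through a genuine comparison theorem. The paper uses \cite[Theorem~10.10]{FangMaGarding} for $g(t+c_g)$ versus $t^d$.

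Finally, ``roots far to the left'' does not by itself show that $p_L$ is right-Noetherian when some $r(g^{(j)})$ coincide, as for $t^d-a$. An actual interlacing check is needed, for instance via $h^{(j)}=(t+L)g^{(j)}+jg^{(j-1)}$ with $h=(t+L)g$ and $L>-c_{d-1}$, or the citation \cite[Lemma~3.1]{CNX}.
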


\begin{proof} 
Let \(F=\Pol_n(g)\). By
\cite[Section~9.2]{FangMaGarding},
\(F\) has the  G{\aa}rding structure. Hence
\(\mathcal C_g\) is open and unitarily invariant, all nonzero
partial derivative of \(G\) are positive on \(\mathcal C_g\), and
\[
 \mathcal C_g+\Gamma_n\subset\mathcal C_g.
\]
 Applying \cite[Theorem~10.10]{FangMaGarding} to
\(g(t+c_g)\) and \(t^d\)  also gives
\[
 \mathcal C_g-c_gI\subset\Gamma_d.
\]

The convexity of $\mathcal{C}_g$ is given in \cite[Theorem~3.1]{LinConvexity} when $f$ is strictly  right-Noetherian with full degree. Now 
 set \(m=n-d\) and Write
\[
F_L=\operatorname{Pol}_n(p_L).
\]
If \(|I|=r\leq d\), then, uniformly on compact subsets of \(\mathcal C_g\),
\[
 \partial_I F_L
 =
  \partial_I F \  L^m
 +O(L^{m-1}).
\]
If \(d<r\leq n\), then
\[
 \partial_I F_L  
 =
 \frac{m!r!}{n!(r-d)!} L^{n-r}
 +O(L^{n-r-1}).
\]
Hence the component criterion gives
\(B\subset\mathcal C_{p_L}\) for every
\(B\Subset\mathcal C_g\) and all sufficiently large \(L\), while
\(\mathcal C_{p_L}\subset\mathcal C_g\) follows from
\cite[Lemma~3.1]{CNX}. This proves \eqref{eq:finiteLift}.

Given \(A_0,A_1\in\mathcal C_g\), choose \(L\) so that both matrices lie in
\(\mathcal C_{p_L}\). For sufficiently
small \(\varepsilon>0\), we may assume  
\(A_0,A_1\in\mathcal C_{p_L-\varepsilon}\). This polynomial is strictly
right-Noetherian, so its cone is convex by
\cite[Theorem~3.1]{LinConvexity}. Moreover,
\[
 \mathcal C_{p_L-\varepsilon}
 \subset\mathcal C_{p_L}
 \subset\mathcal C_g
\]
by \cite[Theorem~10.10]{FangMaGarding}. Thus \(\mathcal C_g\) is
convex, and
\(\operatorname{int}\overline{\mathcal C_g}=\mathcal C_g\).
\end{proof}

The following lemma gives an explicit pointwise distance to the boundary of the cone.
\begin{lemma}\label{lem:depth}
If $2\le d\le n$, $\xi\in\Ccal_g(\chi)$ and $\chi, \sigma>0$ be $(1,1)$-forms, then
\begin{equation}\label{eq:depth}
 \xi-\frac{1}{d}\frac{\Phi_n^g(\xi,\chi)}{\Phi_{n-1}^g(\xi,\chi)\wedge \sigma}\sigma
       \in\overline{\Ccal_g(\chi)}.
\end{equation}
\end{lemma}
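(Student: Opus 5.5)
Fix a point and set $c=\Phi_n^g(\xi,\chi)/(d\,\Phi_{n-1}^g(\xi,\chi)\wedge\sigma)$; the claim is pointwise. By Lemma~\ref{lem:geometry} the form $\Phi_{n-1}^g(\xi,\chi)$ is strictly strongly positive, so the denominator is positive. We have $\Phi_n^g(\xi,\chi)>0$ since $F(\lambda)>0$ on $\mathcal C_g$. Hence $c>0$.

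The plan is to study the one-variable function
\[
h(t)=\frac{\Phi_n^g(\xi-t\sigma,\chi)}{\chi^n}, \qquad t\ge0.
\]
Let $t_0=\sup\{t\ge0:\xi-s\sigma\in\mathcal C_g(\chi)\text{ for }0\le s\le t\}$. For $t<t_0$ the form $\xi-t\sigma$ stays in the open cone, and at $t_0$ (if $t_0<\infty$) it lies on the boundary. Since all nonzero partial derivatives of $F$ are positive on $\mathcal C_g$, the boundary is where $F$ first vanishes along the component. So $h>0$ on $[0,t_0)$ and $h(t_0)=0$. Because $\mathcal C_g$ is convex, the whole segment $\xi-t\sigma$, $0\le t\le t_0$, lies in $\overline{\mathcal C_g(\chi)}$. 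It therefore suffices to prove $t_0\ge c$.

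The key step is a concavity statement: $h^{1/d}$ is concave on $[0,t_0)$. This is the Gårding-type inequality for hyperbolic-like polynomials, here supplied by the Fang--Ma Gårding structure of $\operatorname{Pol}_n(g)$ quoted in Lemma~\ref{lem:geometry}, and it extends to a possibly non-diagonal direction $\sigma$ by unitary invariance and convexity. I would justify it by noting that $\mathcal C_g$ is convex and is the component of $\{F>0\}$, and that $F$ restricted to lines in the positive direction has only real roots to the left, as in Lin's convexity argument.

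Granting concavity, set $u=h^{1/d}$, so $u(0)>0$ and $u(t_0)=0$. Concavity gives $t_0\ge u(0)/(-u'(0))$. By \eqref{eq:formtaylor} with $t\to -t$,
\[
h'(0)=-d\,\frac{\sigma\wedge\Phi_{n-1}^g(\xi,\chi)}{\chi^n}.
\]
Hence
\[
-\frac{u'(0)}{u(0)}=-\frac{h'(0)}{d\,h(0)}=\frac{\sigma\Phi_{n-1}^g}{\Phi_n^g},
\]
which gives
\[
t_0\ge\frac{\Phi_n^g}{\sigma\wedge\Phi_{n-1}^g}=d\,c\ge c.
\]
The case $t_0=\infty$ is trivial. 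This already yields \eqref{eq:depth}, even with the sharper factor $1$ in place of $1/d$.

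The main obstacle is the concavity of $h^{1/d}$. If a direct Gårding citation for non-hyperbolic polynomials is unavailable, I would prove it differently. The weaker constant $1/d$ should come from a cruder argument: concavity of $\log h$, or the more elementary fact that $h$ is a degree-$d$ polynomial in $t$. Such an $h$ has leading coefficient of sign $(-1)^d$ times $\sigma^d$-type positive terms, and its real roots all lie at $t\ge t_0$, by the right-Noetherian root interlacing applied to the restriction to the line. Real-rootedness on this line would then give $-h'(0)/h(0)=\sum_i 1/t_i\le d/t_0$, which is exactly the $1/d$ bound. That is the version I expect the paper uses, and it is the step needing care with possibly complex roots.
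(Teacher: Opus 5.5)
There is a genuine gap: in both your main route and your fallback, the key inequality runs in the wrong direction. If $u$ is concave with $u(0)>0$ and $u(t_0)=0$, its graph lies \emph{below} its tangent line at $0$. Hence $0=u(t_0)\le u(0)+u'(0)t_0$, that is, $t_0\le u(0)/(-u'(0))$. With $u=h^{1/d}$ this is only the upper bound $t_0\le d\,c$, which says nothing about $\xi-c\sigma$. Log-concavity of $h$ likewise only bounds $h$ from above.

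Your root-counting fallback has the same defect, since $\sum_i1/t_i\le d/t_0$ again gives $t_0\le d\,c$. The useful inequality would be $\sum_i1/t_i\ge1/t_0$. That requires all roots of $h$ to be real and positive, and you have no real-rootedness along an arbitrary direction $\sigma$ for the non-hyperbolic polynomials covered here.

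In fact the sharper statement with factor $1$ that you claim is false. Take $n=d=2$, $g(t)=t^2$, $\xi=\chi$, and $\sigma=\operatorname{diag}(1,\epsilon)$ in a $\chi$-unitary frame, with $0<\epsilon<1$. Then $\Phi_2^g(\xi,\chi)/(\Phi_1^g(\xi,\chi)\wedge\sigma)=2/(1+\epsilon)$. The form $\xi-\frac{2}{1+\epsilon}\sigma$ has the negative eigenvalue $(\epsilon-1)/(1+\epsilon)$. The factor $1/2$ instead gives $\operatorname{diag}(\epsilon,1)/(1+\epsilon)\ge0$.

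The missing idea is \emph{convexity} of $h$ itself on $[0,t_0)$. Expanding by \eqref{eq:formtaylor} at $\xi-t\sigma$ gives $h''(t)=d(d-1)\,\Phi_{n-2}^g(\xi-t\sigma,\chi)\wedge\sigma^2/\chi^n$. This is positive because $\Phi_{n-2}^g$ is strictly strongly positive on $\Ccal_g(\chi)$ by Lemma~\ref{lem:geometry}; this is exactly where $d\ge2$ is used. A convex function lies above its tangent line, so $0=h(t_0)\ge h(0)+h'(0)t_0$. Thus $t_0\ge h(0)/(-h'(0))=c$, which is precisely the factor $1/d$ in \eqref{eq:depth}. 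This is the paper's argument.

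The rest of your setup matches the paper and is fine: the exit time $t_0$, $h(t_0)=0$ on the boundary of the component, convexity of $\Ccal_g$ keeping the segment in the closure, and the trivial case $t_0=\infty$. The paper additionally rules out $t_0=\infty$ via the trace bound from $\Ccal_g(\chi)-c_g\chi\subset\Gamma_d(\chi)$.
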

\begin{proof}
The argument is pointwise. Set
\[
  F(\gamma):=\frac{\Phi_n^g(\gamma,\chi)}{\chi^n},
  \qquad
  t_*:=\sup\{t\geq0:\xi-t\sigma\in\Ccal_g(\chi)\}.
\]
Since $\Ccal_g(\chi)$ is open and $\xi\in\Ccal_g(\chi)$, we have
$t_*>0$. Moreover,  by convexity we have
\[
  \{t\geq0:\xi-t\sigma\in\Ccal_g(\chi)\}=[0,t_*).
\]

Let $c_g$ be the translation constant in \eqref{eq:shiftGamma}.
For $0\leq t<t_*$, that inclusion implies
\[
  0<\operatorname{tr}_\chi(\xi-t\sigma-c_g\chi)
   =\operatorname{tr}_\chi \xi-t\operatorname{tr}_\chi \sigma-nc_g.
\]
Since $\operatorname{tr}_\chi \sigma>0$, it follows that
\[
  0<t_*
  \leq
  \frac{\operatorname{tr}_\chi \xi-nc_g}
       {\operatorname{tr}_\chi \sigma}
  <\infty.
\]
By the definition of $t_*$,
 $\xi_*:=\xi-t_*\sigma\in\partial\Ccal_g(\chi)$.
As $\Ccal_g(\chi)$ is a connected component of $\{F>0\}$,
we have
\(
  F(\xi_*)=0.
\)
Indeed, if $F(\xi_*)>0$, a sufficiently small ball around $\xi_*$
would lie in $\{F>0\}$, intersect $\Ccal_g(\chi)$, and hence
belong to the same component, contradicting
$\xi_*\in\partial\Ccal_g(\chi)$.

Set $\varphi(t)=F(\xi-t\sigma)$ for $0\leq t<t_*$. 
By expansion \eqref{eq:derivativetaylor}, for \(0\le t<t_*\), we have
\[
  \varphi'(t)
  =-\frac{d\Phi_{n-1}^g(\xi-t\sigma,\chi)\wedge \sigma}{\chi^n}<0,
  \qquad
  \varphi''(t)
  =\frac{d(d-1)\Phi_{n-2}^g(\xi-t\sigma,\chi)\wedge \sigma^2}{\chi^n}>0.
\]
The second inequality implies $\varphi'(t)\geq\varphi'(0)$.
Since $\varphi(t_*)=0$, we obtain
\[
  \varphi(0)
  =-\int_0^{t_*}\varphi'(t)\,dt
  \leq -t_*\varphi'(0).
\]
Consequently,
\[
  0<\frac{\Phi_n^g(\xi,\chi)}{d\Phi_{n-1}^g(\xi,\chi)\wedge \sigma}
  =\frac{\varphi(0)}{-\varphi'(0)}
  \leq t_*.
\]
Since $\xi-t\sigma\in\overline{\Ccal_g(\chi)}$ for
$0\leq t\leq t_*$, this proves \eqref{eq:depth}.
\end{proof}

\subsection{Existence from subsolutions in the full degree case}
 We recall the existence theorem under subsolution conditions proved  in  \cite{Lin}
 \begin{theorem} [{\cite[Theorem~1.4]{Lin}}]
Let
$ 
 p_x(t)=t^n-\sum_{i=0}^{n-1}\binom ni c_i(x)t^i,$ 
where \(c_i\) is constant for \(i\ge1\) and \(c_0\) is smooth.
Assume that \(p_x\) is strictly right-Noetherian for every \(x\in X\).
If \([\alpha]\) contains a smooth closed form in
\(\mathcal D_{p_x}(\chi)\) and
$ \int_X\Phi_n^{p_x}(\alpha,\chi)=0, $ 
then there exists a unique smooth form  $ 
 \omega\in[\alpha]\cap\mathcal D_{p_x}(\chi)
$ 
satisfying $\Phi_n^{p_x}(\omega,\chi)=0. $ 
\end{theorem}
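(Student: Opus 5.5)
We would prove the theorem by the continuity method, after rewriting $\Phi_n^{p_x}(\omega,\chi)=0$ as a concave, elliptic equation in the sense of Sz\'ekelyhidi's framework \cite{Szekelyhidi}. Fix $x$ and write $F_x=\Pol_n(p_x)$. For $\lambda$ in the open set $\Upsilon:=\mathcal D_{p_x}+\R\mathbf 1$ we set
\[
 h_x(\lambda)=\sup\{s\in\R:\ \lambda-s\mathbf 1\in\overline{\mathcal C_{p_x}}\}.
\]
By \eqref{eq:diagonalcomponent}, $F_x(\lambda-s\mathbf 1)$ is a polynomial in $s$. On $\mathcal C_{p_x}$ it is strictly decreasing in $s$, because its $s$-derivative is $-\sum_j\partial_jF_x<0$. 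So $h_x(\lambda)$ is the largest root $s$ of $F_x(\lambda-s\mathbf 1)=0$, and it is finite by the trace bound in \eqref{eq:shiftGamma}.

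The function $h_x$ has three properties we need. It is concave, since $\overline{\mathcal C_{p_x}}$ is convex by Lemma~\ref{lem:geometry}: if $\lambda-s\mathbf 1$ and $\mu-t\mathbf 1$ both lie in $\overline{\mathcal C_{p_x}}$, so does their midpoint. It is increasing in each $\lambda_j$, since $\overline{\mathcal C_{p_x}}+\Gamma_n\subset\mathcal C_{p_x}$. It is smooth with $\partial_jh_x=\partial_jF_x/\sum_k\partial_kF_x>0$ by the implicit function theorem, which applies because the partials are positive on $\mathcal C_{p_x}$. Moreover $\lambda\in\mathcal C_{p_x}$ iff $h_x(\lambda)>0$, and $F_x(\lambda)=0$ with $\lambda\in\overline{\mathcal C_{p_x}}\cap\mathcal D_{p_x}$ iff $h_x(\lambda)=0$. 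The equation therefore becomes
\[
 h_x\bigl(\lambda(\chi^{-1}\omega)\bigr)=0,\qquad \omega=\alpha+\ddc u .
\]
The strict gap $r(p_x)>r(p_x')$ should guarantee that the level set $\{h_x=0\}$ lies inside $\mathcal D_{p_x}$ and stays away from $\partial\mathcal D_{p_x}$. This is what keeps the operator uniformly elliptic on compact pieces of the level set. Smooth dependence on $x$ through $c_0$ is harmless: $h_x(\lambda)$ depends smoothly on $c_0(x)$, and it does so through a zeroth-order shift.

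Next we show that the given form $\underline\omega\in[\alpha]\cap\mathcal D_{p_x}(\chi)$ is a $\mathcal C$-subsolution in Sz\'ekelyhidi's sense. Concretely, for every $x$ the set
\[
 \{\lambda(\chi^{-1}\underline\omega)+\Gamma_n\}\cap\{h_x=0\}
\]
should be bounded, uniformly in $x$. Since $\underline\omega\in\mathcal D_{p_x}$, each $\partial_jF_x$ is positive along $\lambda(\underline\omega)+\Gamma_n$. Hence sending one eigenvalue to $+\infty$ with the others bounded below forces $F_x\to+\infty$, which gives $h_x>0$ there. Compactness of $X$ makes the bound uniform. With this structure in place we run the standard chain of estimates:
\begin{enumerate}
\item the $C^0$ bound via the Alexandrov--Bakelman--Pucci argument of \cite{Szekelyhidi};
\item the second-order estimate $|\ddc u|\le C(1+\sup|\nabla u|^2)$ of Hou--Ma--Wu / Sz\'ekelyhidi type, using concavity of $h_x$ and the subsolution;
\item a blow-up argument with Liouville theorem to bound the gradient;
\item Evans--Krylov for $C^{2,\beta}$ and Schauder bootstrapping.
\end{enumerate}

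For the continuity path we would use
\[
 h_x\bigl(\lambda(\chi^{-1}(\alpha+\ddc u_s))\bigr)=(1-s)\,h_x\bigl(\lambda(\chi^{-1}\underline\omega)\bigr)+b_s,\qquad s\in[0,1],
\]
with a constant $b_s$ normalizing the integral condition, and $u_0=0$ at $s=0$. Openness follows from the linearization, which is elliptic with no zeroth-order term, together with the adjustable constant $b_s$. Closedness follows from the estimates, which are uniform because $\underline\omega$ remains a subsolution along the path. At $s=1$, the hypothesis $\int_X\Phi_n^{p_x}(\alpha,\chi)=0$ forces $b_1=0$: otherwise $\Phi_n^{p_x}(\omega,\chi)$ would have a strict sign everywhere, contradicting the zero integral. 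Uniqueness follows from the maximum principle applied to the difference of two solutions, using the monotonicity of $h_x$.

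We expect the main obstacle to be the second-order estimate. The function $h_x$ is only implicitly defined, so we need quantitative control of its concavity and of $\sum_j\partial_jh_x$ near the boundary of $\mathcal D_{p_x}$. This is where the \emph{strict} first gap enters. We would first try to show $\sum_j\partial_jh_x\ge c>0$ on $\{h_x=0\}$ by comparing $F_x$ with its values on the diagonal $\R\mathbf 1$, where $F_x(t\mathbf 1)=p_x(t)$ and $p_x'(r(p_x))>0$ by strictness.
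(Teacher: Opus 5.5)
The paper gives no proof of this statement. It is quoted from Lin \cite[Theorem~1.4]{Lin}, and the only argument the paper adds is the reduction that follows it: translate by $c=c_{n-1}$ so that $\eta=\omega-c\chi$ solves an equation with $\widetilde c_{n-1}=0$, and use $(p_x(y+c))^{(n-1)}=n!\,y$ to get $\eta>0$ from $\mathcal D$-membership. Your proposal is therefore an attempt to reprove Lin's theorem, and it has to stand on its own.

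The reformulation through $h_x$ is sound, with two small corrections. First, $\mathcal D_{p_x}+\R\mathbf 1=\R^n$. Second, $h_x(\lambda)$ is the \emph{smallest} real root of $s\mapsto F_x(\lambda-s\mathbf 1)$, not the largest. You also need the step you leave as ``should'', and it is short. For $0<\varepsilon<-p_x(r(p_x'))$ the polynomial $p_x+\varepsilon$ is right-Noetherian. Every point of $\partial\Ccal_{p_x}$ has $F_x+\varepsilon=\varepsilon>0$ and is a limit of points of $\Ccal_{p_x}\subset\Ccal_{p_x+\varepsilon}$. Hence $\partial\Ccal_{p_x}\subset\Ccal_{p_x+\varepsilon}\subset\mathcal D_{p_x}$ by the inclusion $\Upsilon_F\subset\Upsilon_{\partial_jF}$ recalled in Section~\ref{sec:cones}. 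This is exactly what your implicit function theorem step needs, because it is applied at points of $\partial\Ccal_{p_x}$, not of $\Ccal_{p_x}$.

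\textbf{Continuity path.} Closedness rests on $\underline\omega$ remaining a $\mathcal C$-subsolution for the right-hand sides $(1-s)h_x(\underline\lambda)+b_s$. That means $(1-s)h_x(\underline\lambda(x))+b_s<\lim_{t\to\infty}h_x(\underline\lambda(x)+te_i)$ for all $x$ and $i$. But $b_s$ is an unknown produced by openness. The maximum principle only gives $s\min_Xh_x(\underline\lambda)\le b_s\le s\max_Xh_x(\underline\lambda)$, which does not imply the required inequality when $h_x(\underline\lambda)$ is far from constant. So the path needs a different design or an a priori bound on $b_s$.

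\textbf{Endpoint $b_1=0$.} Your argument fails for $b_1<0$: there is no pointwise sign, because $\partial\Ccal_{p_x}+b_1\mathbf 1\not\subset\{F_x<0\}$ in general. For example, take $n=3$, $p_x=t^3-a$, and $\mu=(L,\epsilon,\epsilon)$ with $L\epsilon^2=a$. Then $F_x(\mu-3\epsilon\mathbf 1)=3a-12\epsilon^3>0$ for small $\epsilon$, although $h_x=-3\epsilon$ there. The endpoint can be repaired cohomologically. If $b_1<0$, then $\Phi_n^{p_x}(\omega_1-b_1\chi,\chi)\equiv0$, so $G(b)=\int_X\Phi_n^{p_x}([\alpha]-b[\chi],[\chi])$ vanishes at both $b_1$ and $0$. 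However, $G'(b)=-n\int_X\Phi_{n-1}^{p_x}(\underline\omega-b\chi,\chi)\wedge\chi<0$ for $b\le0$, since $\underline\omega-b\chi\in\mathcal D_{p_x}$. Hence $G(b_1)>G(0)=0$, a contradiction.

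\textbf{The estimates.} The obstacle you single out is vacuous: $h_x(\lambda+t\mathbf 1)=h_x(\lambda)+t$ gives $\sum_j\partial_jh_x\equiv1$, as your own formula shows. The real difficulty is that $c_0(x)$ does not enter $h_x$ additively; the hypersurfaces $\partial\Ccal_{p_x}$ for different values of $c_0$ are not diagonal translates of one another. So $h_x(\lambda)=0$ is not of Sz\'ekelyhidi's form $f(\lambda)=\psi(x)$ with $f$ independent of $x$. Moreover, $h_x$ on $\R^n$ violates his structural hypotheses, for instance $h_x(-t\mathbf 1)\to-\infty$. The $C^0$, $C^2$ and blow-up estimates must therefore be redone with the extra terms $\partial_xh_x=-\partial_xc_0/\sum_k\partial_kF_x$ and their derivatives. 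That requires quantitative control of $\sum_k\partial_kF_x$ and of the second derivatives of $F_x$ along the non-compact hypersurfaces $\partial\Ccal_{p_x}$. None of this is carried out, and it is precisely the analytic content of Lin's theorem that the paper takes as given.
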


We may normalize \(c_{n-1}=0\). Indeed, set
\(c=c_{n-1}\), \(\eta=\omega-c\chi\), and write
\[
 p_x(y+c)
 =
 y^n-\sum_{j=0}^{n-1}\binom nj\widetilde c_j(x)y^j.
\]
Then \(\widetilde c_{n-1}=0\), while
\(\widetilde c_j\) is constant for \(j\ge1\). The translated equation is
\[
 \eta^n
 =
 \sum_{j=0}^{n-2}\binom nj\widetilde c_j(x)
 \eta^j\wedge\chi^{n-j}.
\]
 By
\eqref{eq:translationforms}, the translation preserves the  gaps of root and the normalization condition. Since \(c\)
is constant, \(\eta\) is closed and represents
\([\alpha]-c[\chi]\).
Finally, we have $ 
 \bigl(p_x(y+c)\bigr)^{(n-1)}=n!y. $ 
Then the derivative cone condition implies
\(\eta>0\), hence
\[
 \omega-c\chi>0.
\]

\section{Convex duality}\label{sec:currents}

Let $\mathcal P\subset\Herm(n)$ be the cone of semipositive
matrices. In this section, we fix a nonempty closed convex set
$K\subset\mathcal P$ such that
\begin{equation}\label{eq:Kconditions}
 UKU^*=K\quad(U\in U(n)),\qquad K+\mathcal P\subset K.
\end{equation}
We denote its realization in a $\chi$-unitary frame by $K(\chi)$ and  use the measure $\mu=\chi^n$ throughout. 

For a  positive $1,1$-current $T$, we write its Lebesgue decomposition as  (see~\cite{Bou02})
\[
 T=T_{\ac}+T_{\mathrm s},\qquad
 T_{\ac}\ll\mu,\qquad T_{\mathrm s}\perp\mu.
\]
Here $T_{\ac}$ and $T_{\mathrm{sing}}$ are positive currents
whose coefficient measures are, respectively, absolutely
continuous and singular with respect to the Lebesgue measure. Besides, if \(T\) is closed, then \(T_{\ac}^n\), which is defined a.e., is integrable and hence defines a distribution.  We now consider the following family
\begin{equation}\label{eq:currentfamily}
 \mathcal A=\{T\ge0:T_{\ac}(x)\in K(\chi)(x)
                                  \text{ for a.e. }x\}.
\end{equation}

\begin{lemma}\label{lem:weakclosed}
The family $\mathcal A$ is convex and weakly closed in the space of $(1,1)$-currents. If
$T\in\mathcal A$ and $P\ge0$ is a current, then
$T+P\in\mathcal A$. Moreover, $T \in \mathcal A$ implies $T_{ac}\in \mathcal A$.
\end{lemma}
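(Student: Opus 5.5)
The plan is to work pointwise with the Lebesgue decomposition and use that the condition $T_{\ac}(x)\in K(\chi)(x)$ is a closed convex constraint on the density.

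\emph{Convexity and the cone property.} The Lebesgue decomposition with respect to $\mu$ is linear on positive currents. Hence for $T_0,T_1\in\mathcal A$ and $s\in[0,1]$, the current $T_s=(1-s)T_0+sT_1$ has absolutely continuous part $(T_s)_{\ac}=(1-s)(T_0)_{\ac}+s(T_1)_{\ac}$, computed a.e. as densities. This part lies in $K(\chi)$ a.e. because $K$ is convex; the unitary invariance of $K$ makes $K(\chi)(x)$ a well-defined convex set in each fibre. Similarly $(T+P)_{\ac}=T_{\ac}+P_{\ac}$ with $P_{\ac}\ge0$ a.e., so $K+\mathcal P\subset K$ gives $T+P\in\mathcal A$. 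For the last claim, $T_{\ac}$ is a positive current whose singular part vanishes and whose absolutely continuous part is itself, so $T_{\ac}\in\mathcal A$ trivially.

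\emph{Weak closedness.} This is the main point. Let $T_j\in\mathcal A$ with $T_j\to T$ weakly. Then $T\ge0$. I would characterize membership by duality: by the Hahn--Banach theorem applied fibrewise, and since $K+\mathcal P\subset K$, a positive density $A$ lies in $K$ if and only if $\langle A,B\rangle\ge h_K(B)$ for all $B$ in the dual cone $\mathcal P$, where $h_K(B):=\inf_{A'\in K}\langle A',B\rangle$ is concave, positively homogeneous and finite on $\mathcal P$. It suffices to test against a countable dense family of $B$.

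So the claim reduces to the following. For every continuous semipositive $(n-1,n-1)$-type test form $\beta$, corresponding to a section $B$ of $\mathcal P(\chi)$, and every $\phi\ge0$ continuous,
\[
\int\phi\,T\wedge\beta\ \ge\ \int \phi\,h_K(B)\,\mu
\quad\text{implies}\quad
T_{\ac}\wedge\beta\ \ge\ h_K(B)\,\mu\ \text{a.e.}
\]
For $T_j$ the inequality on the left holds because $T_{j,\mathrm s}\wedge\beta\ge0$ and $(T_{j})_{\ac}\wedge\beta\ge h_K(B)\mu$ a.e. Since $h_K(B)$ is continuous in $x$, passing to the weak limit gives the same inequality of measures for $T$: $T\wedge\beta\ge h_K(B)\mu$.

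Taking the absolutely continuous part of this measure inequality, with the singular part $T_{\mathrm s}\wedge\beta\perp\mu$ discarded, gives $T_{\ac}\wedge\beta\ge h_K(B)\mu$ a.e. Running $B$ over a countable dense set of smooth sections, using continuity of $h_K$ on the interior of $\mathcal P$ and positive homogeneity, gives $T_{\ac}(x)\in K(\chi)(x)$ off a null set.

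The step I expect to need care is the fibrewise duality description of $K$. It needs $h_K$ to be finite and continuous on $\mathcal P$, which holds because $K\subset\mathcal P$ gives $h_K\ge0$. It also needs testing only against semipositive $B$ to suffice, which uses $K+\mathcal P\subset K$: any separating functional must be nonnegative on $\mathcal P$.
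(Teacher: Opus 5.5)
Your proof is correct, but for weak closedness you take a different route from the paper.

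The paper works locally in a fixed smooth $\chi$-unitary frame with the normalized barycentres $A_{j,\varepsilon}(x)=Z_\varepsilon(x)^{-1}\int\rho_\varepsilon(x-y)\,dM_j(y)$:
\begin{enumerate}
\item Each $A_{j,\varepsilon}(x)$ is an average of matrices in $K$ plus a semipositive matrix, so it lies in $K$.
\item It lets $j\to\infty$ with $\varepsilon$ fixed, using closedness of $K$.
\item It lets $\varepsilon\to0$, using Lebesgue differentiation to recover the density of $T_{\ac}$ and to kill the singular part.
\item It invokes metrizability of the positive cone to pass from sequential closedness to closedness.
\end{enumerate}

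You instead dualize $K$ through its support function, as in \eqref{eq:halfspaces}. This turns membership in $\mathcal A$ into countably many linear measure inequalities $T\wedge\beta_B\ge h_K(B)\mu$. That is exactly the test characterization \eqref{eq:supportdistribution}, which the paper only uses later, in Section~\ref{sec:gluing}. Each inequality survives weak limits, and taking absolutely continuous parts recovers the pointwise constraint.

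What your version buys: it exhibits $\mathcal A$ as the positive cone intersected with weakly closed half-spaces. Closedness under nets is then immediate, with no appeal to metrizability, and convexity comes for free. The cost is the fibrewise duality and the reduction to a countable family; the paper's barycentre argument avoids both. It uses only closed convexity and $K+\mathcal P\subset K$, and the same mechanism is reused in Lemma~\ref{lem:current-cone-mollification}.

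Two small corrections to your justifications; neither affects validity.
\begin{enumerate}
\item You do not need continuity of $x\mapsto h_K(B(x))$ to pass to the limit, because that side of the inequality does not depend on $j$. Taking $B$ constant in a local $\chi$-unitary frame makes it constant anyway.
\item The fact $h_K\ge0$ gives only finiteness, not continuity on $\mathcal P$. To go from a dense set of positive definite $B$ to all $B\ge0$, it suffices that $h_K(B+\epsilon I)\ge h_K(B)$, which follows from $K\subset\mathcal P$.
\end{enumerate}
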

\begin{proof}
We obtain convexity and stability under addition from
\[
 ((1-t)T_0+tT_1)_{\ac}
      =(1-t)(T_0)_{\ac}+t(T_1)_{\ac},
 \qquad (T+P)_{\ac}=T_{\ac}+P_{\ac}.
\]

To prove the weak closedness, let $T_j\in\mathcal A$ converge weakly
to $T$. Since each $T_j$ is positive, we have $T\ge0$.
Fix a relatively compact coordinate domain and a smooth \(\chi\)-unitary frame. In this frame, write
\[
dM_j=A_j\,d\mu+dM_{j,\mathrm s},
\qquad A_j(y)\in K\ \text{a.e.},\qquad dM_{j,\mathrm s}\ge0.
\]
The fixed frame identifies weak convergence of currents with weak convergence of the corresponding matrix-valued measures. Hence \(M_j\rightharpoonup M\), where \(M\) represents \(T\)

Let $\rho_\varepsilon\ge0$ be a smooth radial approximate
identity in the coordinate variables. We take $\varepsilon$
smaller than the distance to the outer boundary. Put
\begin{align*}
 Z_\varepsilon(x)&=\int\rho_\varepsilon(x-y)\,d\mu(y),\\
 A_{j,\varepsilon}(x)
 &=\frac{1}{Z_\varepsilon(x)}
             \int\rho_\varepsilon(x-y)\,dM_j(y).
\end{align*}
We have $Z_\varepsilon(x)>0$. The Lebesgue decomposition gives
\begin{equation}\label{eq:barycentre}
 A_{j,\varepsilon}(x)
 =\frac{\int\rho_\varepsilon(x-y)A_j(y)\,d\mu(y)}
        {Z_\varepsilon(x)}
  +\frac{\int\rho_\varepsilon(x-y)\,dM_{j,\mathrm s}(y)}
        {Z_\varepsilon(x)}.
\end{equation}
The first term is an average of matrices in $K$ and belongs to
$K$ by closed convexity. The second term is semipositive.
Consequently, \eqref{eq:Kconditions} implies
\[
 A_{j,\varepsilon}(x)\in K.
\]

We first fix $\varepsilon>0$. Since the convolution kernel is a
smooth compactly supported test function, weak convergence gives
\[
 A_{j,\varepsilon}(x)\longrightarrow
 A_\varepsilon(x)
 :=\frac{\int\rho_\varepsilon(x-y)\,dM(y)}{Z_\varepsilon(x)}.
\]
As $K$ is closed, we obtain $A_\varepsilon(x)\in K$. 
Now we let $\varepsilon\downarrow0$. Write $d\mu=w\,dx$,
where $w$ is smooth and positive and 
$dM=A\,d\mu+dM_{\mathrm s}$. At almost every point,  the Lebesgue differentiation theorem gives  
\begin{align*}
 Z_\varepsilon(x)=(\rho_\varepsilon*w)(x)&\longrightarrow w(x),\\
 \int\rho_\varepsilon(x-y)A(y)\,d\mu(y)
        &\longrightarrow A(x)w(x),\\
 \int\rho_\varepsilon(x-y)\,dM_{\mathrm s}(y)
        &\longrightarrow0.
\end{align*} 
It then follows that
\[
 A_\varepsilon(x)\longrightarrow A(x)\quad\text{a.e.}
\]
Since $K$ is closed, we obtain $A(x)\in K$ a.e.
A finite coordinate cover then gives $T\in\mathcal A$.
The cone of positive currents is metrizable in the weak topology \cite[Chapter III, Section 1.D]{DemaillyBook}. Therefore, the preceding sequential argument shows that \(\mathcal A\) is weakly closed. 
\end{proof}

\subsection{A duality criterion}

Lamari's criterion characterizes classes containing positive
currents by pairing against smooth strictly positive
$\ddc$-closed $(n-1,n-1)$-forms \cite[Lemme 3.3]{Lamari}.
We use the separation principle, motivated by 
\cite[Proposition 2.3]{Tosatti}, with an additional condition on
the absolutely continuous part of the current.

\begin{proposition}[Convex duality]\label{prop:duality}
Let $n\ge2$, let $K$ satisfy \eqref{eq:Kconditions}, and let
$\psi$ be a smooth closed real $(1,1)$-form. The following are
equivalent
\begin{enumerate}
\item There is a closed positive current $R\in[\psi]$ with
$R_{\ac}\in K(\chi)$ almost anywhere.
\item For any smooth strictly positive $(n-1,n-1)$-form
$\Gamma$ with $\ddc\Gamma=0$, there is a positive current
$\xi_\Gamma$ such that
\[
 (\xi_\Gamma)_{\ac}\in K(\chi)\quad\text{a.e.},\qquad
 \int_X(\xi_\Gamma)_{\ac}\wedge\Gamma
       \le\int_X\psi\wedge\Gamma.
\]
\end{enumerate}
\end{proposition}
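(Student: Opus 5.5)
The implication (i)$\Rightarrow$(ii) is direct. Given $R\in[\psi]$ with $R_{\ac}\in K(\chi)$ a.e., take $\xi_\Gamma:=R$ for every $\Gamma$. Write $R=\psi+\ddc\varphi$. Since $\ddc\Gamma=0$, integration by parts gives $\int_X R\wedge\Gamma=\int_X\psi\wedge\Gamma$. The singular part satisfies $R_{\mathrm s}\ge0$ and $\Gamma>0$, so
\[
\int_X R_{\ac}\wedge\Gamma\le\int_X R\wedge\Gamma=\int_X\psi\wedge\Gamma .
\]

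For (ii)$\Rightarrow$(i) I will argue by contradiction with a Hahn--Banach separation, following \cite[Proposition 2.3]{Tosatti} and Lamari \cite{Lamari}. The first step is to pass to absolutely continuous currents. By Lemma~\ref{lem:weakclosed}, each $(\xi_\Gamma)_{\ac}$ lies in $\mathcal A$, so we may assume $\xi_\Gamma=(\xi_\Gamma)_{\ac}$. Its mass is then bounded: $\int_X\xi_\Gamma\wedge\chi^{n-1}\le c_\Gamma^{-1}\int_X\psi\wedge\Gamma$, where $\Gamma\ge c_\Gamma\chi^{n-1}$.

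Now fix $M>0$ and set $\mathcal A_M=\{T\in\mathcal A:\int_X T\wedge\chi^{n-1}\le M\}$. This set is convex and weakly compact, by Lemma~\ref{lem:weakclosed} and the compactness of positive currents of bounded mass. The set $\psi+\ddc\mathcal D'(X)$ is a closed affine subspace, since the image of $\ddc$ is closed by Hodge theory (the $\ddc$-lemma). Hence $\mathcal A_M-\psi-\ddc\mathcal D'(X)$ is closed and convex. If no $R$ as in (i) exists, it does not contain $0$ for any $M$. Separation in the dual of currents then produces a smooth real $(n-1,n-1)$-form $\Gamma_M$ with the following properties. It annihilates $\ddc\mathcal D'$, so $\ddc\Gamma_M=0$. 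It satisfies $\int_X T\wedge\Gamma_M\ge\int_X\psi\wedge\Gamma_M+1$ for all $T\in\mathcal A_M$.

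Next I must show that $\Gamma_M$ is essentially positive. For this I use $T_0+P$, where $T_0\in\mathcal A$ is a fixed smooth element (for instance a multiple of $\chi$ large enough to lie in $K(\chi)$, using $K+\mathcal P\subset K$) and $P$ is a small positive current, such as a Dirac mass times a positive $(1,1)$-vector. Testing the separation inequality on $T_0+P$, and using that $\mathcal A$ is stable under adding positive currents, should give $\Gamma_M\ge-\tfrac{C}{M}\|\Gamma_M\|\chi^{n-1}$. After normalizing $\Gamma_M$, the form $\Gamma:=\Gamma_M+\epsilon\chi^{n-1}$ is strictly positive and $\ddc$-closed. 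Hypothesis (ii) then yields an absolutely continuous $\xi_\Gamma\in\mathcal A$ with $\int_X\xi_\Gamma\wedge\Gamma\le\int_X\psi\wedge\Gamma$. If its mass is at most $M$, this contradicts the strict separation inequality once $\epsilon$ is small.

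The main obstacle is this last coupling between $M$ and $\epsilon$. The mass bound for $\xi_\Gamma$ degenerates as $\epsilon\to0$, while the positivity defect of $\Gamma_M$ decays only like $1/M$. To close the loop I would replace $\mathcal A_M$ by the convex cone generated by $\mathcal A$ with a normalization slice, so that separation produces a genuinely semipositive $\Gamma$; this works because $\mathcal A+\mathcal P\subset\mathcal A$ makes the recession cone contain all positive currents. I would then pass to a limit in $M$ of normalized $\Gamma_M$ and use the strict margin $1$ to absorb the perturbation $\epsilon\chi^{n-1}$. Verifying that the margin survives the normalization is the point I expect to need the most care.
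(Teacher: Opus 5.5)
\textbf{Direction (i)$\Rightarrow$(ii).} This part is correct and is exactly the paper's argument.

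\textbf{Direction (ii)$\Rightarrow$(i).} This direction is not proved. The obstacle you flag in your last paragraph is a genuine gap, and it comes from separating only on the mass-truncated set $\mathcal A_M$.

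A functional that is bounded below only on $\mathcal A_M-\psi-\ddc\mathcal D'(X)$ cannot be tested along the recession directions $T_0+tP$ with $t\to\infty$. This has two consequences:
\begin{enumerate}
\item You only get the approximate positivity $\Gamma_M\ge-\tfrac{C}{M}\|\Gamma_M\|\chi^{n-1}$. Moreover, the margin $1$ is not normalized against $\|\Gamma_M\|$.
\item The separation inequality applies to $\xi_\Gamma$ only when its mass is at most $M$. Your bound $c_\Gamma^{-1}\int_X\psi\wedge\Gamma$ does not guarantee this as $\epsilon\to0$.
\end{enumerate}
Your suggested remedy via the cone generated by $\mathcal A$ is not carried out. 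As stated it does not obviously help: $K$ need not be a cone (e.g.\ $\overline{\Ccal_p}-cI$). In any case you need a functional bounded below on all of $\mathcal A$, and that again requires closedness of an untruncated set.

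\textbf{The missing idea.} No truncation is needed, because $\mathcal A-\mathcal F$ with $\mathcal F=\psi+\ddc\mathcal D'(X)$ is itself weakly closed.
\begin{enumerate}
\item Since $\chi^{n-1}$ is closed, every $S\in\mathcal F$ has the same mass $m_\psi=\int_X\psi\wedge\chi^{n-1}$.
\item Fix $W$ in the closure of $\mathcal A-\mathcal F$. On the weakly open set $\{U:\int_XU\wedge\chi^{n-1}<\int_XW\wedge\chi^{n-1}+1\}$, any element $T-S$ of $\mathcal A-\mathcal F$ satisfies $\int_XT\wedge\chi^{n-1}<\int_XW\wedge\chi^{n-1}+1+m_\psi=:M$.
\item So near $W$ the set lies in $\mathcal A_M-\mathcal F$. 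This set is closed, being a compact set plus a closed set. Hence $W\in\mathcal A-\mathcal F$.
\end{enumerate}

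\textbf{Completing the separation.} Separating $0$ from all of $\mathcal A-\mathcal F$ gives a smooth $\sigma$ and $a_*>0$ with $\int_X(T-S)\wedge\sigma\ge a_*$ for every $T\in\mathcal A$ and $S\in\mathcal F$.
\begin{enumerate}
\item Replacing $S$ by $S+s\ddc v$ gives $\ddc\sigma=0$, as you had.
\item Since $T+tP\in\mathcal A$ for all $t\ge0$, letting $t\to\infty$ now gives $\sigma\ge0$ exactly, with no $1/M$ defect.
\item For $\Gamma=\sigma+\varepsilon\chi^{n-1}$ and every $T\in\mathcal A$, with no mass restriction, positivity of $T$ gives $\int_X(T-\psi)\wedge\Gamma\ge a_*+\varepsilon\bigl(\int_XT\wedge\chi^{n-1}-m_\psi\bigr)\ge a_*-\varepsilon m_\psi>0$ for small $\varepsilon$.
\end{enumerate}
Applying this to $T=(\xi_\Gamma)_{\ac}\in\mathcal A$ contradicts (ii). The $M$--$\epsilon$ coupling you worried about disappears because the added term $\varepsilon\chi^{n-1}$ pairs with $T-\psi$ to give something bounded below by $-\varepsilon m_\psi$, independently of the mass of $T$.
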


\begin{proof}
We follow the separation argument in
\cite[Proposition 2.3]{Tosatti}. By averaging a matrix of $K$
over the unitary group, we obtain $aI\in K$ for some $a\ge0$.
Thus $a\chi\in\mathcal A$, so this family is nonempty.

We put
\[
 \mathcal F=\{\psi+\ddc u:u\text{ is a real distribution}\}.
\]
We first show that $\mathcal F$ is weakly closed. We normalize
$\int_Xu_j\chi^n=0$. If $\ddc u_j\to U$ in distributions,
we use the scalar Green operator to write
\[
 u_j=G_\chi\bigl(\tr_\chi\ddc u_j\bigr)
       \longrightarrow G_\chi(\tr_\chi U)=u.
\]
Here we choose the Laplacian normalization compatible with the
trace. Continuity of $G_\chi$ on distributions gives
$\ddc u=U$. Hence $\mathcal F$ is closed. The K\"ahler
$\partial\bar\partial$ lemma identifies $\mathcal F$ with the
closed real $(1,1)$-currents in the class of $\psi$.

any $S\in\mathcal F$ has the same mass:
\[
 \int_XS\chi^{n-1}=\int_X\psi\chi^{n-1}=:m_\psi.
\]
We claim that $\mathcal A-\mathcal F$ is weakly closed.
Let $W$ lie in its closure and choose the neighborhood
\[
 \mathcal U_W=\left\{U:
 \int_XU\chi^{n-1}<\int_XW\chi^{n-1}+1\right\}.
\]
If $U=T-S\in\mathcal U_W$ with $T\in\mathcal A$ and
$S\in\mathcal F$, then
\[
 \int_XT\chi^{n-1}
 <\int_XW\chi^{n-1}+1+m_\psi=:M.
\]
This mass bound and Lemma~\ref{lem:weakclosed} imply that the set
\[
 \mathcal A_M=\left\{T\in\mathcal A:
                       \int_XT\chi^{n-1}\le M\right\}
\]
is weakly compact; see \cite[Chapter III, Proposition 1.23]{DemaillyBook}.
Since $\mathcal F$ is closed, $\mathcal A_M-\mathcal F$ is closed.
It contains $\mathcal U_W\cap(\mathcal A-\mathcal F)$, and hence
its closure contains $W$. Thus $W\in\mathcal A-\mathcal F$.

Assume (ii) and suppose $\mathcal A\cap\mathcal F=\varnothing$.
We separate the closed convex set $\mathcal A-\mathcal F$ from
zero. We obtain a smooth real test form $\sigma$ and $a_*>0$ with
\begin{equation}\label{eq:separation}
 \int_X(T-S)\wedge\sigma\ge a_*
       \qquad(T\in\mathcal A,\ S\in\mathcal F).
\end{equation}
We may replace $S$ by $S+s\ddc v$ for any $s\in\R$.
Thus
\[
 \int_X\ddc v\wedge\sigma=0\quad\text{for any }v,
 \qquad\text{so}\qquad \ddc\sigma=0.
\]
We may also replace $T$ by $T+tP$ for any positive current
$P$ and any $t\ge0$. Dividing \eqref{eq:separation} by $t$
and letting $t\to\infty$, we obtain
\[
 \int_XP\wedge\sigma\ge0\quad(P\ge0).
\]
Hence $\sigma$ is semipositive.

For a small $\varepsilon>0$, put
$\Gamma=\sigma+\varepsilon\chi^{n-1}$. Then
\[
 \Gamma>0,\qquad \ddc\Gamma=0,
\]
and, for any $T\in\mathcal A$,
\begin{align*}
 \int_X(T-\psi)\wedge\Gamma
 &\ge a_*+\varepsilon\left(\int_XT\chi^{n-1}-m_\psi\right)\\
 &\ge a_*-\varepsilon m_\psi>0.
\end{align*}
By (ii), the absolutely continuous current
$T=(\xi_\Gamma)_{\ac}$ belongs to $\mathcal A$ and satisfies
$\int_X(T-\psi)\wedge\Gamma\le0$, a contradiction. Hence
$\mathcal A\cap\mathcal F\ne\varnothing$, proving (i).

Conversely, let $R$ satisfy (i) and take $\xi_\Gamma=R$.
Since $R_{\mathrm s}\ge0$ and $\ddc\Gamma=0$, we have
\[
 \int_XR_{\ac}\wedge\Gamma
 \le\int_XR\wedge\Gamma
 =\int_X\psi\wedge\Gamma.
\]
This proves (ii).
\end{proof}

We will also use the connection between the absolutely continuous
constraint and local smoothing. This identifies our currents with
the current cone used in \cite[Definition~7.2]{CNX}.

\begin{lemma}\label{lem:current-cone-mollification}
Let $K$ and $\mathcal A$ be as in Lemma~\ref{lem:weakclosed}.
A positive $(1,1)$-current $T$ belongs to $\mathcal A$
if and only if, for every coordinate ball $U$, every constant
positive form $h_0\le\chi$ on $U$, and every nonnegative
smooth mollifier $\rho_\varepsilon$ of integral one,
\[
 (T*\rho_\varepsilon)(x)\in K(h_0)
 \qquad\text{whenever } \text{dist}(x,U)>\varepsilon.
\]
Convolution is taken coefficientwise in the chosen coordinates.
\end{lemma}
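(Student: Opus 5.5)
We prove the two implications separately, working in a fixed coordinate ball $U$ with a constant positive form $h_0\le\chi$ on $U$. The first task is to compare $K(h_0)$ with $K(\chi)$. Since $h_0\le\chi$ and $K+\mathcal P\subset K$, $K(\chi)(x)\subset K(h_0)$ at each $x\in U$. To see this, write $A$ for the matrix of a form $\xi$ in a $\chi$-unitary frame. Passing to an $h_0$-unitary frame replaces $A$ by $B^*AB$, where $B^*B\ge I$ is the matrix of $\chi$ relative to $h_0$. If $A\in K\subset\mathcal P$, we can use unitary invariance and diagonalize so that $B=UD$ with $D\ge I$ diagonal. Then $B^*AB=D(U^*AU)D$. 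For a diagonal $D\ge I$ and $A'=U^*AU\in K$ semipositive, the difference $DA'D-A'$ need not be semipositive in general, so the entrywise route fails. Instead I would write $D=\prod_i D_i$, where each $D_i$ rescales one coordinate by $s_i\ge1$. For a single rescaling, $D_iA'D_i-A'$ has the form $\begin{pmatrix}(s^2-1)a&(s-1)b^*\\(s-1)b&0\end{pmatrix}$, which is again not semipositive. Therefore I expect the clean statement to be the monotonicity $\xi\in K(\chi)\Rightarrow\xi\in K(h_0)$ for $h_0\le\chi$. I would derive it from the fact that $\xi\in K(\chi)$ iff $\xi\ge$ some $\eta$ with $\eta\in K(\chi)$, combined with scaling invariance, or else simply assume the paper intends $K$ such that this monotonicity holds; cones of the form $\mathcal C_g\cap\mathcal P$-type satisfy it. I flag this monotonicity as the main obstacle, and would first try to prove it using $K+\mathcal P\subset K$, convexity and unitary invariance, via the identity $B^*AB=A^{1/2}$-conjugation tricks.

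\textbf{Forward direction.} Let $T\in\mathcal A$. At a point $x$ with $\mathrm{dist}(x,\partial U)>\varepsilon$, the computation \eqref{eq:barycentre} from the proof of Lemma~\ref{lem:weakclosed} applies with $\rho_\varepsilon$ in place of the normalized kernel. We have $(T*\rho_\varepsilon)(x)=\int\rho_\varepsilon(x-y)T_{\ac}(y)\,dy+\int\rho_\varepsilon(x-y)\,dT_{\mathrm s}(y)$, with coefficients taken in the fixed coordinates, hence relative to $h_0$ as a constant frame. By the monotonicity step, each $T_{\ac}(y)$ lies in $K(h_0)$, which is a fixed closed convex set since $h_0$ is constant. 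The first integral, taken against the probability measure $\rho_\varepsilon(x-y)\,dy$ (the density $w$ is absorbed because Lebesgue measure in coordinates is the reference here), therefore lies in $K(h_0)$. The second term is semipositive. Hence $(T*\rho_\varepsilon)(x)\in K(h_0)$.

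\textbf{Converse.} Assume the mollification condition. As $\varepsilon\to0$, the Lebesgue differentiation theorem gives $(T*\rho_\varepsilon)(x)\to T_{\ac}(x)$ for a.e.\ $x\in U$, exactly as in the proof of Lemma~\ref{lem:weakclosed}, where the singular part averages to zero a.e. Closedness of $K(h_0)$ then gives $T_{\ac}(x)\in K(h_0(x))$ a.e. To return to $K(\chi)$, fix a Lebesgue point $x_0$ and choose $h_0$ to be the constant form equal to $(1-\tau)\chi(x_0)$ on a small ball around $x_0$, with $\tau>0$ small and the ball small enough that $h_0\le\chi$ there. This yields $T_{\ac}(x_0)\in K((1-\tau)\chi(x_0))$. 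Letting $\tau\to0$, the closedness of $K$ together with the continuity of the frame change gives $T_{\ac}(x_0)\in K(\chi)(x_0)$. The points $x_0$ range over a countable family of balls, so the exceptional set is still null. A finite coordinate cover then shows $T\in\mathcal A$.
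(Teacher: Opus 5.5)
Your argument follows the paper's proof step for step: in the forward direction you average $T_{\ac}$ and add a semipositive singular contribution, and in the converse you use Lebesgue differentiation at a fixed point with $h_0=(1-\tau)\chi(x_0)$ and then let $\tau\to0$. The forward direction, however, has a genuine gap. It needs the monotonicity $\xi\in K(\chi(y))\Rightarrow\xi\in K(h_0)$ for semipositive $\xi$ and $h_0\le\chi(y)$, and you leave this unproved, even proposing to assume it. Without it you cannot place $T_{\ac}(y)$ in the fixed closed convex set $K(h_0)$, so the averaging step does not start.

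Neither of the routes you list closes the gap. Scaling invariance is not available, because $K$ is only a closed convex set and not a cone: for $p(t)=t^n-a$, the set $\overline{\Ccal_p}$ is $\{A\ge0:\det A\ge a\}$. Assuming the paper intends such a $K$ is not an argument. Your observation that $B^*AB-A$ need not be semipositive is correct but beside the point. Because $K$ is unitarily invariant and $K+\mathcal P\subset K$, membership in $K$ depends only on the ordered eigenvalues, and it suffices to show that these do not decrease.

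The paper gets this from the min--max principle. Since $K\subset\mathcal P$, we have $\xi\ge0$. Hence for every $v\ne0$,
$\xi(v,\bar v)/h_0(v,\bar v)\ge\xi(v,\bar v)/\chi(v,\bar v)$, using $\xi(v,\bar v)\ge0$ and $0<h_0(v,\bar v)\le\chi(v,\bar v)$. Courant--Fischer then gives $\lambda_j(h_0^{-1}\xi)\ge\lambda_j(\chi^{-1}\xi)$ for every $j$, with eigenvalues in increasing order. After unitary changes of frame,
$\operatorname{diag}\bigl(\lambda(h_0^{-1}\xi)\bigr)=\operatorname{diag}\bigl(\lambda(\chi^{-1}\xi)\bigr)+\operatorname{diag}\bigl(\lambda(h_0^{-1}\xi)-\lambda(\chi^{-1}\xi)\bigr)\in K+\mathcal P\subset K$.

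Your $A^{1/2}$ hint also works if carried out. Since $BB^*\ge I$, the matrix $B^*AB=(A^{1/2}B)^*(A^{1/2}B)$ has the same spectrum as $A^{1/2}BB^*A^{1/2}\ge A$, and Weyl monotonicity finishes.

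With this step inserted your proof is complete. The countable family of balls in the converse is unnecessary: each differentiation point may use its own ball and its own $\tau$.
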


\begin{proof}
In local coordinates, write the Lebesgue decomposition of
the coefficient matrix measure as
\[
 dM=A(y)\,dy+dM_{\mathrm s},
 \qquad A\ge0,\quad dM_{\mathrm s}\ge0,
\]
and set $M_\varepsilon=\rho_\varepsilon*M$.

Suppose $T\in\mathcal A$.
For $0<h_0\le\chi$, the min--max principle, unitary invariance,
and the property of $K$ implies that if 
 $A(y)\in K(\chi(y))$, then
 $A(y)\in K(h_0)$. Consequently,
\[
 M_\varepsilon(x)
 =\int\rho_\varepsilon(x-y)A(y)\,dy
  +\int\rho_\varepsilon(x-y)\,dM_{\mathrm s}(y)
 \in K(h_0).
\]
Indeed, the first term is an average in the closed convex
set $K(h_0)$, and the second is semipositive.

Conversely, fix a nonnegative smooth compactly supported
kernel $\rho$ of integral one and set
$\rho_\varepsilon(z)=\varepsilon^{-2n}\rho(z/\varepsilon)$.
Differentiation of measures gives
\[
 \rho_\varepsilon*(A\,dy)\to A,\qquad
 \rho_\varepsilon*M_{\mathrm s}\to0
 \quad\text{a.e.}
\]
Fix a point $x$ at which both limits hold, so that
$M_\varepsilon(x)\to A(x)$, and let $0<e<1$.
The constant form $h_e=(1-e)\chi(x)$ satisfies $h_e\le\chi$
on a sufficiently small coordinate ball around $x$.
Thus, for sufficiently small $\varepsilon$,
\[
\frac{\chi(x)^{-1/2}M_\varepsilon(x)\chi(x)^{-1/2}}{1-e}
 \in K.
\]
Letting first $\varepsilon\to 0$ and then $e\to 0$,
closedness gives $A(x)\in K(\chi(x))$.
Hence $T\in\mathcal A$.
\end{proof}

\section{Mass concentration}\label{sec:concentration}

Motivated by McCleerey's attempts to find plurisupported currents,
Tosatti observed that every Kähler class contains a closed positive
current supported on a closed set of Lebesgue measure zero (for more
concrete examples, see \cite{McCleerey}). Together with the Lebesgue decomposition of positive \((1,1)\)-currents, this  nullset supported  current is sufficient for the mass concentration and cone duality arguments.

\begin{lemma}[{\cite[Proposition~2.1]{Tosatti}}]
\label{lem:nullset}
For any K\"ahler form $\kappa$, there exist a closed positive
current $S\in[\kappa]$, a closed Lebesgue null set $\Sigma$, and
smooth K\"ahler forms $S_j\in[\kappa]$ such that
\[
 \supp S\subset\Sigma,
 \qquad
 S_j\rightharpoonup S.
\]
\end{lemma}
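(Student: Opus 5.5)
Since Lemma~\ref{lem:nullset} is quoted from \cite[Proposition~2.1]{Tosatti}, I would reproduce Tosatti's construction. It rests on two facts: a K\"ahler class contains positive currents that are smooth and strictly positive away from an arbitrarily small closed set, and the mass of such currents can be forced onto a fat-Cantor-type closed null set.

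The plan has three steps. First, I fix a finite cover of $X$ by coordinate balls $B_i$ and write $\kappa=\ddc\phi_i$ on $B_i$ with $\phi_i$ smooth and strictly plurisubharmonic. Next, I choose a closed null set $\Sigma\subset X$, for instance a finite union of real hypersurfaces, or rather a set built so that its complement is a union of small open pieces. The goal is a $\kappa$-plurisubharmonic function $u$ with $\kappa+\ddc u\ge0$ and $\kappa+\ddc u=0$ on the open set $X\setminus\Sigma$.

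The cleanest route I would try is a countable iteration. Enumerate a countable basis of small balls $D_m$ whose union is dense but has complement of measure zero, chosen so that the closures of the balls are pairwise disjoint. On each $D_m$, modify the potential by taking a maximum-type gluing: replace the local potential by the pluriharmonic or maximal (Perron--Bremermann) envelope of its boundary values on $D_m$. This keeps global $\kappa$-plurisubharmonicity, because the envelope lies above the function inside and agrees with it on the boundary. It also makes $\kappa+\ddc u$ vanish in the interior of $D_m$ as a Monge--Amp\`ere-type maximal current. Strictly speaking, for $(1,1)$-currents I need $\ddc$ of the potential to vanish, not only its Monge--Amp\`ere. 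Tosatti instead uses a Cantor-type set with positive codimension-one structure, and taking $\Sigma$ a closed set of measure zero containing a dense union of real hypersurfaces along which the current is carried should suffice. The limiting current $S=\kappa+\ddc u$ is then supported in $\Sigma=X\setminus\bigcup D_m$, which I arrange to have measure zero by choosing the radii so that $\sum\mu(D_m)=\mu(X)$.

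The approximation $S_j\rightharpoonup S$ by smooth K\"ahler forms comes from Demailly regularization \cite{DemaillyRegularization} applied to $u$. Because $\kappa$ is K\"ahler and $u$ is $\kappa$-psh, the regularization gives smooth $u_j\downarrow u$ with $\kappa+\ddc u_j\ge-\varepsilon_j\kappa$. Rescaling, $S_j:=(1+\varepsilon_j)^{-1}(\kappa+\ddc u_j)+\tfrac{\varepsilon_j}{1+\varepsilon_j}\kappa$ lies in $[\kappa]$, is strictly positive, and converges weakly to $S$.

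The main obstacle is the construction of $u$: producing a closed positive current in $[\kappa]$ whose $\ddc$-part exactly cancels $\kappa$ on an open set of full measure. Local maximal envelopes kill only $(\kappa+\ddc u)^n$, not the whole form, so I would follow Tosatti and build $\Sigma$ concretely. Locally, a convex function that is affine off a null closed set has $\ddc$ supported there; one example is a Cantor-staircase-type convex function in each real direction whose second derivative is a singular measure. I would patch these with a partition into coordinate boxes and absorb the error by the strict positivity of $\kappa$.
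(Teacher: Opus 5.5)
Your approximation step is essentially right. The paper uses B{\l}ocki--Ko{\l}odziej regularization rather than Demailly's, which makes no difference. The gap is the core of the lemma: a $\kappa$-plurisubharmonic $u$ with $\kappa+\ddc u=0$ \emph{exactly} on an open set of full measure is never constructed, and neither route you sketch produces one.

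The envelope route fails for the reason you note yourself. For $n\ge2$, generic boundary data on a ball have no pluriharmonic extension, and the Perron--Bremermann envelope only kills the Monge--Amp\`ere measure. For instance, $(\ddc|z_1|^2)^2=0$ while $\ddc|z_1|^2\ne0$, so $S$ would still charge every $D_m$. The remark that a dense union of real hypersurfaces carrying the current ``should suffice'' just restates the goal.

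The Cantor-staircase route has two gaps. First, a convex function with singular Hessian measure is useful on a chart only if it equals $\phi_i+u$, i.e.\ only if it approximates the local potential $\phi_i$ of $\kappa$. The model $\sum_kc(x_k)$ has no relation to $\phi_i$. Second, and more seriously, patching with a partition of unity and absorbing the error with the strict positivity of $\kappa$ is incompatible with the conclusion. The terms of $\ddc\bigl(\sum_i\zeta_iu_i\bigr)$ that contain derivatives of the cutoffs are nonzero on open sets. Whatever part of $\kappa$ you reserve to absorb them survives as an absolutely continuous part of $S$, since nothing makes the errors cancel it exactly. So $\supp S$ would have positive measure: on $X\setminus\Sigma$ there is no margin at all.

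The missing idea is the one in the paper's summary of Tosatti's construction: local potentials modeled on $\max\{\log|g|,0\}$, with mass only on creases and gluing interfaces. Take local potentials that are finite maxima of pluriharmonic functions, and glue them by a genuine, non-regularized maximum. The positivity of $\kappa$ is spent inside the local models, not kept as a margin. Here is one concrete way to do it.

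\begin{enumerate}
\item After subtracting a pluriharmonic quadratic, $\phi_i$ is strictly convex on a small ball $\{|z|\le R_i\}$. So is $\Phi_i=\phi_i-\eta(|z|^2-r_i^2)$ for small $\eta>0$.
\item Let $\psi_i$ be a finite maximum of tangent affine functions, i.e.\ real parts of affine holomorphic $h$, or $\log|e^h|$. Then $\Phi_i-\epsilon\le\psi_i\le\Phi_i$, and $\ddc\psi_i$ is carried by finitely many real hyperplanes.
\item Set $w_i=\psi_i-\phi_i$. Then $w_i\ge-\epsilon$ on $V_i=\{|z|<r_i\}$. Near $\{|z|=R_i\}$ we have $w_i\le-\eta(|z|^2-r_i^2)<-\epsilon$, once $\epsilon<\eta(R_i^2-r_i^2)$.
\item Hence $u=\max\{w_i:x\in D_i\}$ is continuous and $\kappa$-plurisubharmonic. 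Locally, $\phi+u$ is a maximum of functions that are pluriharmonic off those hyperplanes.
\item Tie sets between branches are zero sets of real-analytic functions, hence null unless the branches agree. So $\supp(\kappa+\ddc u)$ is a closed null set.
\end{enumerate}

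Finally, your $S_j$ only satisfy $S_j\ge0$, not strict positivity. Use $(1+2\varepsilon_j)^{-1}(\kappa+\ddc u_j+2\varepsilon_j\kappa)\ge\frac{\varepsilon_j}{1+2\varepsilon_j}\kappa$ instead. Alternatively, since $u$ is continuous, apply Richberg's theorem to $(1-t)u$, which has margin $t\kappa$.
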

The current \(S\) is locally given by continuous potentials modeled on
\(\max\{\log|g|,0\}\), where \(g\) is a local holomorphic function.  Tosatti glues finitely
many of these local models to construct \(S\). The resulting current is
supported on the creases of the potentials and on the interfaces where
the models are glued. These sets lie in a closed Lebesgue-null set
\(\Sigma\).  The regularization theorem of
B{\l}ocki-Ko{\l}odziej
\cite{BlockiKolodziej} then gives K\"ahler forms
$S_j\in[\kappa]$ converging weakly to $S$.

\subsection{Mass concentration in full degree}

\begin{proposition}\label{prop:fullconcentration}
Let $p$ be a constant monic right-Noetherian polynomial of degree
$n$, written as $g(t)=t^n-\sum_{i<n}\binom ni c_it^i$. Suppose $[\alpha]$ contains a smooth closed form
$\omega_\alpha\in\Ccal_p(\chi)$. Let $c=c_{n-1}$, and let $\kappa$ be a K\"ahler form. Let
\begin{equation}\label{eq:concentrationratio}
 V=\int_X\Phi_n^p([\alpha],[\chi]),\qquad
 D=n\int_X\Phi_{n-1}^p([\alpha],[\chi])[\kappa],\qquad \rho=V/D.
\end{equation}
There is a closed current $Q$ with
\begin{equation}\label{eq:fullconcentratedcurrent}
 [Q]=\alpha-\rho[\kappa],\qquad Q\ge c\chi,
 \qquad Q_{\ac}\in\overline{\Ccal_p(\chi)}\quad\text{a.e.}
\end{equation}
Its singular part is positive.
\end{proposition}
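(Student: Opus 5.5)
The plan is to produce $Q$ as a weak limit of smooth closed forms $Q_j\in[\alpha]-\rho[\kappa]$ with $Q_j\in\overline{\Ccal_p(\chi)}$ pointwise. The limiting statements then come from Lemma~\ref{lem:weakclosed}, applied to the closed convex set
\[
K=\chi^{-1/2}\bigl(\overline{\Ccal_p}-cI\bigr)\chi^{-1/2}\quad\text{(realized in $\chi$-unitary frames)}.
\]
By Lemma~\ref{lem:geometry} with $d=n$, this $K$ is contained in $\mathcal P$, is unitarily invariant and convex, and satisfies $K+\mathcal P\subset K$. So if $Q_j-c\chi\in\mathcal A$ and $Q_j\rightharpoonup Q$, then $Q-c\chi\in\mathcal A$. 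This gives $Q\ge c\chi$, $Q_{\ac}\in\overline{\Ccal_p(\chi)}$ a.e., and positivity of the singular part. The singular part of $Q-c\chi$ is positive and $c\chi$ is absolutely continuous, so $Q_{\mathrm s}=(Q-c\chi)_{\mathrm s}\ge0$. Closedness and the class are preserved under weak limits. The required mass bound is automatic: $Q_j-c\chi\ge0$ lies in a fixed class, so $\int(Q_j-c\chi)\chi^{n-1}$ is constant. Thus everything reduces to constructing the $Q_j$, possibly only in the limit.

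To construct them, I would first replace $p$ by $p-\varepsilon$, which is strictly right-Noetherian. By Lemma~\ref{lem:geometry} its cone lies in $\Ccal_p$. I then let $\varepsilon\to0$ at the very end, where $V_\varepsilon\to V$ and $\rho_\varepsilon\to\rho$, so the limiting argument above absorbs this perturbation. Next, take Tosatti's forms $S_j\in[\kappa]$ from Lemma~\ref{lem:nullset}. For each $j$ I want $\omega_j\in[\alpha]\cap\Ccal_p(\chi)$ solving a concentrating equation
\[
\Phi_n^{p}(\omega_j,\chi)=h_j\chi^n,\qquad h_j\ge0,\quad \int_Xh_j\chi^n=V.
\]
Here $h_j\chi^n$ is chosen concentrated near $\Sigma$, for instance proportional to $S_j\wedge\Phi_{n-1}^p(\omega_\alpha,\chi)$ smoothed, or directly modeled on $S_j$. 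This is an equation of the form in Lin's theorem, with $c_0(x)$ replaced by $c_0+h_j/\binom{n}{0}$. The strict first gap holds for this zeroth coefficient because $f(r_1)$ only decreases, and the normalization holds by the choice of $\int h_j$. The subsolution required there is $\omega_\alpha$, since $\Ccal\subset\mathcal D$. Lemma~\ref{lem:depth} with $\sigma=S_j$ then gives pointwise
\[
\omega_j-\tau_jS_j\in\overline{\Ccal_p(\chi)},\qquad \tau_j=\frac{h_j\chi^n}{n\,\Phi_{n-1}^p(\omega_j,\chi)\wedge S_j}.
\]

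The main obstacle is that $\tau_j$ is a function, not the constant $\rho$, so $\omega_j-\tau_jS_j$ is not closed. My first attempt is to pass to limits separately: $\omega_j\rightharpoonup\Omega\in[\alpha]$ and $\tau_jS_j\rightharpoonup\mu\ge0$. The limit $\Omega-\mu$ lies in $\mathcal A+c\chi$ by Lemma~\ref{lem:weakclosed}. It then remains to compare $\mu$ with $\rho S$, where $S=\lim S_j$. Two facts should make this work. First, $S$ is supported on the null set $\Sigma$, so $S_{\ac}=0$; hence replacing $\mu$ by $\rho S$ does not affect the absolutely continuous part, and only the inequality $\Omega_{\mathrm s}\ge\rho S$ is needed. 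Second, the total masses match: integrating the defining identity gives
\[
\int_X\tau_j\,n\,\Phi_{n-1}^p(\omega_j,\chi)\wedge S_j=V=\rho\int_X n\,\Phi_{n-1}^p(\omega_j,\chi)\wedge S_j .
\]
So $\tau_j$ averages to $\rho$ against the measures $n\Phi_{n-1}^p(\omega_j,\chi)\wedge S_j$.

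I would try to upgrade this averaged identity to $\Omega\ge\rho S$ on $\Sigma$. The idea is to use that all mass of $\Omega$ on $\Sigma$ is singular, and that on $\Sigma$ the concentrated right-hand side forces $\omega_j$ to be large in the $S_j$ direction. Concretely, I would pair with $\ddc$-closed positive test forms $\Gamma$ and argue through Proposition~\ref{prop:duality}: for each $\Gamma$ it suffices to exhibit some $\xi_\Gamma\in\mathcal A+c\chi$ with $\int(\xi_\Gamma)_{\ac}\wedge\Gamma\le\int(\alpha-\rho\kappa)\wedge\Gamma$. Taking $\xi_\Gamma=\Omega$ restricted away from $\Sigma$ reduces this to the mass inequality $\int_\Sigma\Omega\wedge\Gamma\ge\rho\int S\wedge\Gamma$. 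This in turn should follow from the averaged identity once $h_j$ is tuned, for each $\Gamma$, to concentrate exactly where $S_j\wedge\Gamma$ does. This tuning is the step I expect to require the most care.
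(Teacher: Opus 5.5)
You have the right architecture, and your final reduction is the paper's. For each $\ddc$-closed $\Gamma>0$, with $M=\int_X\kappa\wedge\Gamma$, you show that a weak limit $\xi_\Gamma$ of $\omega_j-c\chi$ satisfies $\int_\Sigma\xi_\Gamma\wedge\Gamma\ge\rho M$. You then invoke Proposition~\ref{prop:duality} with $K=\overline{\Ccal_p}-cI$ and the class $[\alpha]-c[\chi]-\rho[\kappa]$. But that mass inequality is the whole content of the proposition, and you leave it as ``should follow from the averaged identity once $h_j$ is tuned''. The averaged identity cannot do this by itself. It says that $\tau_j$ has mean $\rho$ against $P_j=n\Phi_{n-1}^p(\omega_j,\chi)\wedge S_j$. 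What you need is a lower bound for $\int_U\tau_j\,S_j\wedge\Gamma$ on small neighbourhoods $U$ of $\Sigma$, which is an average against a different measure.

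The missing idea is to take $h_j\chi^n=\frac VM S_j\wedge\Gamma$. This is strictly positive, so $\omega_j\in\Ccal_p(\chi)$ and no $\varepsilon$-shift is needed. Write $q=S_j\wedge\Gamma/\chi^n$ and $P=P_j/\chi^n$. Then $\tau_j=\frac VM\frac qP$, and since $\omega_j-c\chi\ge\tau_jS_j$, Cauchy--Schwarz gives
\[
\int_U(\omega_j-c\chi)\wedge\Gamma\ge\int_U\tau_j\,S_j\wedge\Gamma=\frac VM\int_U\frac{q^2}{P}\,\chi^n\ge\frac VM\,\frac{\bigl(\int_Uq\,\chi^n\bigr)^2}{\int_UP\,\chi^n}\ge\frac{V}{MD}\Bigl(\int_US_j\wedge\Gamma\Bigr)^2 .
\]
Your averaged identity enters only as $\int_XP\chi^n=D$, which bounds the denominator.

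You also need a localization step. Choose $U_j\downarrow\Sigma$ and a subsequence of the $S_j$ with $\int_{U_j}S_j\wedge\Gamma\ge M-1/j$. Then $\int_{U_j}(\omega_j-c\chi)\wedge\Gamma\ge\rho M-o(1)$. Fixed cutoffs $\zeta_k$, equal to $1$ on $U_{k+1}$, pass this to $\int_\Sigma\xi_\Gamma\wedge\Gamma\ge\rho M$. Since $\Sigma$ is Lebesgue-null, all of this mass is singular, which is exactly what (ii) of Proposition~\ref{prop:duality} requires.

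Drop your earlier routes rather than repair them. The estimates only give scalar inequalities after pairing with a fixed $\Gamma$, and the auxiliary equation itself depends on $\Gamma$. Lemma~\ref{lem:depth} controls $\omega_j$ only in the direction $S_j$, with a nonconstant coefficient $\tau_j$. Nothing here gives the current inequality $\Omega_{\mathrm s}\ge\rho S$ for one $\Gamma$-independent limit $\Omega$. Nor does it give smooth closed forms in $[\alpha]-\rho[\kappa]$ lying in $\overline{\Ccal_p(\chi)}$, as in your first paragraph. This is why the paper obtains $Q=R+c\chi$ only abstractly, from the Hahn--Banach separation in Proposition~\ref{prop:duality}.
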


\begin{proof}
 Since $\alpha$ contains a smooth closed form
$\omega_\alpha\in\Ccal_p(\chi)$, we have $V,D>0$. By
Lemma~\ref{lem:geometry}, the set
\[
 K(\chi)=\overline{\Ccal_p}-cI
\]
is closed, convex, unitarily invariant, and contained in the
semipositive matrices. We will apply Proposition~\ref{prop:duality}
to this set and the class $\alpha-c\chi-\rho[\kappa]$.

Fix a smooth strictly positive $\ddc$-closed form $\Gamma$ of
bidegree $(n-1,n-1)$ and put
\[
 M=\int_X\kappa\wedge\Gamma>0.
\]
Choose $S,\Sigma,S_j$ as in Lemma~\ref{lem:nullset}. We have
\[
 \int_XS_j\wedge\Gamma=\int_XS\wedge\Gamma=M,
\]
because all these currents represent $[\kappa]$ and
$\ddc\Gamma=0$. Choose decreasing open neighborhoods $U_j$ of
$\Sigma$ with $\overline{U_{j+1}}\subset U_j$ and
$\bigcap_jU_j=\Sigma$. Choose smooth functions $0\le\zeta_j\le1$
supported in $U_j$ and equal to one on $U_{j+1}$.

For each fixed $j$, weak convergence of the approximations gives
$\int_X\zeta_jS_l\wedge\Gamma\to M$ as $l\to\infty$.
We choose a subsequence, still denoted by $S_j$, for which
\begin{equation}\label{eq:nullmass}
 \int_{U_j}S_j\wedge\Gamma
 \ge\int_X\zeta_jS_j\wedge\Gamma\ge M-j^{-1}.
\end{equation}

We solve the smooth auxiliary equations
\begin{equation}\label{eq:auxiliaryPDE}
 \Phi_n^p(\omega_j,\chi)=\frac VM S_j\wedge\Gamma,
                     \qquad [\omega_j]=\alpha.
\end{equation}
To solve \eqref{eq:auxiliaryPDE}, we set
$q_j=(V/M)S_j\wedge\Gamma/\chi^n>0$ and $r_1=r(p')$.
Right-Noetherianity gives $p(r_1)\le0$, so
\[
 (p-q_j(x))(r_1)<0,
 \qquad (p-q_j(x))^{(a)}=p^{(a)}\quad(a\ge1).
\]
Thus $p-q_j(x)$ is strictly right-Noetherian at any point.
Its derivative cone is independent of $x$ and $j$. The form $\omega_\alpha$ is therefore a subsolution for any such equation. Also,
\[
 \int_Xq_j\chi^n=\frac VM\int_XS_j\wedge\Gamma=V.
\]
After the normalization in Section~\ref{sec:cones}, Lin's theorem
\cite[Theorems 1.4 and 2.2]{Lin} gives the solution $\omega_j$.
Its positive polynomial value and its derivative inequalities,
together with \eqref{eq:diagonalcomponent}, show
$\omega_j\in\Ccal_p(\chi)$.

By Lemma~\ref{lem:geometry}, $\psi_j:=\omega_j-c\chi>0$. We define the positive function
\[
 \tau_j=\frac{\Phi_n^p(\omega_j,\chi)}
                  {n\Phi_{n-1}^p(\omega_j,\chi)\wedge S_j}.
\]
The directional distance estimate (see Lemma~\ref{lem:depth}) and Lemma~\ref{lem:geometry} give
\[
 \psi_j-\tau_jS_j\in \overline{C_p}(\chi)-c\chi\subset\{P\ge0\}.
\]
Consequently, $\psi_j\ge\tau_jS_j$. For any measurable set $U$,
write $q=S_j\wedge\Gamma/\chi^n$ and
$P=n\Phi_{n-1}^p(\omega_j,\chi)\wedge S_j/\chi^n$. Since $\omega_j$ is the solution of equation~\eqref{eq:auxiliaryPDE},  we have
\begin{align}
 \int_U\psi_j\wedge\Gamma&\geq\int_U \tau_j S_j\wedge \Gamma
 \ge\frac VM\int_U\frac{q^2}{P}\chi^n\notag\\
 &\ge\frac VM\frac{(\int_Uq\chi^n)^2}{\int_UP\chi^n}
 \ge\frac{V}{MD}\left(\int_US_j\wedge\Gamma\right)^2.
                                                   \label{eq:nulltrace}
\end{align}
The third inequality follows from the Cauchy-Schwarz inequality.
The last inequality follows from the identity
\[
 \int_XP\chi^n
 =n\int_X\Phi_{n-1}^p(\omega_j,\chi)\wedge S_j
 =n\int_X\Phi_{n-1}^p(\alpha,\chi)[\kappa]=D.
\]
In particular, $D$ is independent of $j$ and $\Gamma$.

By \eqref{eq:nullmass} and \eqref{eq:nulltrace},
\begin{equation}\label{eq:concentratedmass}
 \liminf_{j\to\infty}\int_{U_j}\psi_j\wedge\Gamma\ge\rho M.
\end{equation}
The positive currents $\psi_j$ all represent $\alpha-c[\chi]$, so their
$\chi$-masses are bounded. We pass to a subsequence with
$\psi_j\rightharpoonup\xi_\Gamma$. Since $\psi_j\in K(\chi)$,
Lemma~\ref{lem:weakclosed} gives
\[
 \xi_\Gamma\ge0,\qquad
 (\xi_\Gamma)_{\ac}\in K(\chi)\quad\text{a.e.},
 \qquad [\xi_\Gamma]=\alpha-c[\chi].
\]

We now use fixed cutoffs to pass to the limit in
\eqref{eq:concentratedmass}. For $j\ge k+1$, the cutoff
$\zeta_k$ equals one on $U_j$. Hence
\[
 \int_X\zeta_k\xi_\Gamma\wedge\Gamma
 =\lim_j\int_X\zeta_k\psi_j\wedge\Gamma\ge\rho M.
\]
As $k\to\infty$, the cutoffs tend pointwise to $1_\Sigma$.
Dominated convergence for the positive measure
$\xi_\Gamma\wedge\Gamma$ gives
\[
 \int_\Sigma\xi_\Gamma\wedge\Gamma\ge\rho M.
\]
Since the positive measure $(\xi_\Gamma)_{\ac}\wedge\Gamma$ is absolutely continuous with
respect to Lebesgue, and $\Sigma$ has zero Lebesgue measure,
\[
 \int_\Sigma(\xi_\Gamma)_{\ac}\wedge\Gamma=0,
 \qquad
 \int_\Sigma(\xi_\Gamma)_{\text{sing}}\wedge\Gamma
 =\int_\Sigma\xi_\Gamma\wedge\Gamma\ge\rho M.
\]
Using $[\xi_\Gamma]=\alpha-c[\chi]$, $\ddc\Gamma=0$, and
$M=\int_X\kappa\wedge\Gamma$, we obtain
\begin{align*}
 \int_X(\alpha-c\chi-\rho[\kappa])\wedge\Gamma
 &=\int_X\xi_\Gamma\wedge\Gamma-\rho M\\
 &=\int_X(\xi_\Gamma)_{\ac}\wedge\Gamma
   +\int_X(\xi_\Gamma)_{\text{sing}}\wedge\Gamma-\rho M\\
 &\ge\int_X(\xi_\Gamma)_{\ac}\wedge\Gamma   +\int_\Sigma(\xi_\Gamma)_{\text{sing}}\wedge\Gamma-\rho M\\
 &\ge\int_X(\xi_\Gamma)_{\ac}\wedge\Gamma.
\end{align*}
 Applying Proposition~\ref{prop:duality}
for all such $\Gamma$, we obtain a closed positive current
$R\in\alpha-c[\chi]-\rho[\kappa]$ with $R_{\ac}\in K(\chi)$.
Setting $Q=R+c\chi$ proves \eqref{eq:fullconcentratedcurrent}.
Its singular part equals that of $R$ and is positive.
\end{proof}

\subsection{Construct a current in the limiting class}

The preceding construction assumes $d=n$ and a smooth admissible representative.
We now consider the case $2\leq d\leq n$ and allow only the approximating classes to have such representatives.

\begin{proposition}\label{prop:endpoint}
{\color{black}Let $2\le d\le n$, and let $g$ be a monic
right-Noetherian polynomial of degree $d$ with constant coefficients.
Suppose $[\alpha_j]\to[\alpha]$ and any $[\alpha_j]$ contains
a smooth closed form in $\Ccal_g(\chi)$. Assume
\[
 V_g([\alpha]):=\int_X\Phi_n^g([\alpha],[\chi])>0.
\]
For any K\"ahler form $\kappa$, we have
\[
 D_g([\alpha],\kappa):=d\int_X\Phi_{n-1}^g([\alpha],[\chi])\wedge[\kappa]>0.
\]
If $0<\delta<V_g([\alpha])/D_g([\alpha],\kappa)$ and
$\theta\in[\alpha]$ is smooth and closed, there are a
quasi-plurisubharmonic function $u$ and finite constants $L,C>0$
such that $T=\theta+\ddc u$ has positive singular part and
\begin{equation}\label{eq:endpointcurrent}
 T-\delta\kappa\ge-C\chi,\qquad
 (T-\delta\kappa)_{\ac}\in
 \overline{\Ccal_{p_L}(\chi)}\subset
 \overline{\Ccal_g(\chi)}\quad\text{a.e.},
\end{equation}
where $p_L(t)=(t+L)^{n-d}g(t)$.
}
\end{proposition}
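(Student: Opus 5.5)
The plan is to reduce to Proposition~\ref{prop:fullconcentration} applied to the full-degree polynomial $p_L(t)=(t+L)^{n-d}g(t)$ in the approximating classes $[\alpha_j]$, and then pass to the limit $j\to\infty$ with $L$ fixed.

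\textbf{Step 1: positivity of $D_g$.} By Lemma~\ref{lem:geometry}, for a smooth $\omega\in[\alpha_j]\cap\Ccal_g(\chi)$ the form $\Phi_{n-1}^g(\omega,\chi)$ is strictly strongly positive. Hence $D_g([\alpha_j],\kappa)>0$, and letting $j\to\infty$ gives $D_g([\alpha],\kappa)\ge0$. For strictness I would use the concavity in the direction $\kappa$ from the proof of Lemma~\ref{lem:depth}. The function $t\mapsto V_g([\alpha_j]-t[\kappa])$ is a polynomial whose coefficients converge. Lemma~\ref{lem:depth}, applied in integrated form to $\omega_j$, gives the inequality $V_g([\alpha_j])\le t_j D_g([\alpha_j],\kappa)$, where $t_j$ is bounded by the trace bound in that proof. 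If $D_g([\alpha],\kappa)=0$, then $V_g([\alpha_j])\to V_g([\alpha])>0$ forces $t_j\to\infty$. This contradicts the uniform trace bound $t_j\le C\int\operatorname{tr}_\chi\omega_j\,\chi^n/\int\operatorname{tr}_\chi\kappa\,\chi^n$, which is controlled by the cohomology class. (Alternatively, I would use the Aleksandrov--Fenchel-type inequality $V\cdot(\text{second derivative})\le\frac{d-1}{d}D^2$ on the closure of the cone.)

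\textbf{Step 2: choice of $L$.} Fix $\delta<V_g([\alpha])/D_g([\alpha],\kappa)$. As $L\to\infty$, the full-degree quantities satisfy
\[
V_{p_L}=L^{n-d}V_g+O(L^{n-d-1}),\qquad D_{p_L}=\tfrac{n}{d}L^{n-d}D_g+O(L^{n-d-1}),
\]
with the degree-$n$ normalisation, because $\Phi^{p_L}_q$ expands as $L^{n-d}\Phi^g_q$ plus lower powers of $L$. The ratio $\rho_L=V_{p_L}/D_{p_L}$ in \eqref{eq:concentrationratio} thus tends to $V_g/D_g$, the constants matching by the factor $n$ versus $d$. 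Fix one finite $L$ with $\rho_L([\alpha])>\delta$. Then $\rho_L([\alpha_j])>\delta$ for large $j$.

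The harder point is that $[\alpha_j]$ must contain a smooth form in $\Ccal_{p_L}(\chi)$ for this fixed $L$. Lemma~\ref{lem:geometry} only gives $B\subset\Ccal_{p_L}$ for compact $B$, with $L$ depending on $B$. Since the representatives $\omega_j$ need not stay in a fixed compact subset, my first attempt is to add a small multiple of $\chi$. Because $\Ccal_g+\Gamma_n\subset\Ccal_g$ and $p_L$ has its extra roots at $-L$, I expect $\omega+\varepsilon\chi\in\Ccal_{p_L}$ once $L$ is large relative to $\varepsilon^{-1}$ and to $c_g$. This is the main obstacle: I would verify it through the component criterion, using $\partial_I F_L>0$ and the explicit expansion in the proof of Lemma~\ref{lem:geometry}, with lower-order terms controlled by the positivity gained from $\varepsilon\chi$. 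The cost is replacing $[\alpha_j]$ by $[\alpha_j]+\varepsilon[\chi]$ and $\delta$ by a slightly smaller value, then absorbing $\varepsilon$ at the end. Since $\rho_L$ is continuous in the class, a small $\varepsilon$ keeps $\rho_L>\delta$.

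\textbf{Step 3: concentration and limit.} Proposition~\ref{prop:fullconcentration} applied to $p_L$ and the class $[\alpha_j]+\varepsilon[\chi]$ gives currents $Q_j$ with the following properties:
\[
[Q_j]=[\alpha_j]+\varepsilon[\chi]-\rho_L\kappa,\qquad Q_j\ge c_L\chi,\qquad (Q_j)_{\ac}\in\overline{\Ccal_{p_L}(\chi)},
\]
and the singular part of $Q_j$ is positive. Set $Q_j'=Q_j+(\rho_L-\delta)\kappa$. This current satisfies the same constraints, since the cone is stable under adding positive forms and $Q_j'\ge c_L\chi$. The differences $Q_j'-c_L\chi$ are positive with bounded mass, so they have a weak limit. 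Lemma~\ref{lem:weakclosed}, with $K=\overline{\Ccal_{p_L}}-c_LI$, shows that the limit $Q$ lies in the class $[\alpha]+\varepsilon[\chi]-\delta[\kappa]$ with $Q_{\ac}\in\overline{\Ccal_{p_L}(\chi)}$. To remove $\varepsilon$, I would instead let $\varepsilon\to0$ together with $j$; the constant $c_L$, and hence the lower bound, stays fixed.

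\textbf{Step 4: writing $T$.} Write $Q=\theta-\delta\kappa+\ddc u$ by the $\partial\bar\partial$-lemma, and put $T=\theta+\ddc u$. Then
\[
T-\delta\kappa=Q\ge c_L\chi=:-C\chi,
\]
the inclusion $\overline{\Ccal_{p_L}}\subset\overline{\Ccal_g}$ comes from \eqref{eq:finiteLift}, and $T_{\mathrm s}=Q_{\mathrm s}\ge0$.
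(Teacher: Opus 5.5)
Two steps fail: the proof that $D_g([\alpha],\kappa)>0$, and the choice of $L$ before letting $j\to\infty$.

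\textbf{Step 1: positivity of $D_g$.} Lemma~\ref{lem:depth} is pointwise. It gives $\Phi_n^g(\omega_j,\chi)\le t_*(x)\,d\,\Phi_{n-1}^g(\omega_j,\chi)\wedge\kappa$ with $t_*(x)\le(\tr_\chi\omega_j(x)-nc_g)/\tr_\chi\kappa(x)$. Integrating therefore only yields $V_g([\alpha_j])\le(\sup_X t_*)\,D_g([\alpha_j],\kappa)$. The supremum of $\tr_\chi\omega_j$ is not controlled by the class; only its integral is. Replacing $\sup_X t_*$ by an average is exactly what needs proof.

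The Aleksandrov--Fenchel alternative does not fill this gap either. A pointwise Newton-type inequality for general right-Noetherian $g$ does not pass to intersection numbers. Even granted at the single class $[\alpha]$, if $D_g=0$ and $d\ge3$ it only forces $\int_X\Phi^g_{n-2}([\alpha],[\chi])\wedge[\kappa]^2=0$, which is no contradiction.

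The paper obtains the needed inequality from Proposition~\ref{prop:fullconcentration} itself. Take a class $[\beta]$ with a representative in $\Ccal_g(\chi)$, and $L$ large. Then $(Q_L)_{\ac}\in\overline{\Ccal_g(\chi)}$ and \eqref{eq:shiftGamma} make $(Q_L-c_g\chi)\wedge\chi^{n-1}$ a positive measure. Hence $\int_X(\beta-c_g\chi)\wedge\chi^{n-1}\ge\rho_L([\beta])\int_X\kappa\wedge\chi^{n-1}$. Let $L\to\infty$, then take $[\beta]=[\alpha_j]\to[\alpha]$. This gives $\int_X(\alpha-c_g\chi)\wedge\chi^{n-1}\cdot D_g([\alpha],\kappa)\ge V_g([\alpha])\int_X\kappa\wedge\chi^{n-1}>0$.

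\textbf{Steps 2--3: fixing $L$ first.} Your limit in $j$ with $L$ fixed needs one $L$ for which every $[\alpha_j]$, or $[\alpha_j]+\varepsilon[\chi]$, has a representative in $\Ccal_{p_L}(\chi)$. Your proposed remedy is false for $d<n$. By \eqref{eq:shiftGamma} applied to $p_L$, every eigenvalue of an element of $\Ccal_{p_L}$ exceeds $c_{p_L}=(dc_g-(n-d)L)/n$. On the other hand, $\Ccal_g$ contains $\operatorname{diag}(-M,R,\ldots,R)$ for every $M$ once $R\gg M$. Nothing bounds the $\omega_j$ uniformly; the limit class need not contain any admissible form. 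So $\omega_j+\varepsilon\chi\in\Ccal_{p_L}(\chi)$ cannot be arranged with $L$ depending only on $\varepsilon$. Moreover, if $L$ must grow with $\varepsilon^{-1}$, sending $\varepsilon\to0$ sends $c_L\to-\infty$ and destroys the bound $T-\delta\kappa\ge-C\chi$.

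The missing idea is that no limit in $j$ is needed.
\begin{enumerate}
\item Choose $\delta<\rho<V_g([\alpha])/D_g([\alpha],\kappa)$ and smooth $\theta_j\in[\alpha_j]$ with $\theta_j\to\theta$.
\item Fix one $j$ with $V_g([\alpha_j])/D_g([\alpha_j],\kappa)>\rho$ and $\theta-\theta_j\ge-(\rho-\delta)\kappa$.
\item That class has a smooth admissible representative with compact image in $\Ccal_g$. So \eqref{eq:finiteLift} gives a finite $L$ with this representative in $\Ccal_{p_L}(\chi)$ and $\rho_L([\alpha_j])>\rho$.
\item Take $Q_L$ from Proposition~\ref{prop:fullconcentration} and set $T=Q_L+\rho_L\kappa+\theta-\theta_j\in[\alpha]$.
\end{enumerate}
Then $T-\delta\kappa=Q_L+\zeta$ with $\zeta\ge(\rho_L-\rho)\kappa>0$. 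This yields the cone condition, a positive singular part, and $T-\delta\kappa\ge c_L\chi$.

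A minor point: with $D_L=n\int_X\Phi^{p_L}_{n-1}\wedge[\kappa]$ and $D_g=d\int_X\Phi^g_{n-1}\wedge[\kappa]$, one has $L^{-(n-d)}D_L\to D_g$, with no factor $n/d$.
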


\begin{proof}
We first work in a class $[\beta]$ containing a smooth closed
form in $\Ccal_g(\chi)$. No such representative is assumed in
the limiting class $[\alpha]$.
For all sufficiently large $L$, Lemma~\ref{lem:geometry} gives
a representative of $[\beta]$ in $\Ccal_{p_L}(\chi)$.
Set
\[
 V_L=\int_X\Phi_n^{p_L}([\beta],[\chi]),\qquad
 D_L=n\int_X\Phi_{n-1}^{p_L}([\beta],[\chi])\wedge[\kappa],\qquad
 \rho_L([\beta])=V_L/D_L.
\]
These numbers are positive. Proposition~\ref{prop:fullconcentration}
gives a closed current $Q_L$ such that
\begin{equation}\label{eq:raisedcurrent}
 [Q_L]=[\beta]-\rho_L([\beta])[\kappa],\qquad Q_L\ge c_L\chi,
 \qquad (Q_L)_{\ac}\in\overline{\Ccal_{p_L}(\chi)}.
\end{equation}
Here $c_L$ is the next-to-leading coefficient of $p_L$.
The singular part of $Q_L$ is positive.

Write $c_g=c_{d-1}$ and set
\[
 B([\beta])=\int_X(\beta-c_g\chi)\wedge\chi^{n-1},\qquad
 M_0=\int_X\kappa\wedge\chi^{n-1}>0.
\]
By \eqref{eq:shiftGamma},
$(Q_L-c_g\chi)\wedge\chi^{n-1}\ge0$ as a measure.
Thus $B([\beta])\ge\rho_LM_0$.
Since $L^{-(n-d)}p_L\to g$ coefficientwise,
\begin{equation}\label{ineq:L^(-n-d)}
 L^{-(n-d)}V_L\to V_g([\beta]),\qquad
 L^{-(n-d)}D_L\to D_g([\beta],\kappa)>0.
\end{equation}
The denominator is positive on a smooth admissible representative.
Letting $L\to\infty$ gives
\begin{equation}\label{eq:denominatorbound}
 B([\beta])D_g([\beta],\kappa)\ge V_g([\beta])M_0.
\end{equation}

Apply \eqref{eq:denominatorbound} to $[\beta]=[\alpha_j]$ and
pass to the limit. Both factors on the left are nonnegative, and
\[
 B([\alpha])D_g([\alpha],\kappa)
 \ge V_g([\alpha])M_0>0.
\]
Hence $D_g([\alpha],\kappa)>0$. In particular,
\[
\frac{V_g([\alpha_j])}{D_g([\alpha_j],\kappa)}
 \longrightarrow\frac{V_g([\alpha])}{D_g([\alpha],\kappa)}.
\]

Choose $\delta<\rho<V_g([\alpha])/D_g([\alpha],\kappa)$.
Take smooth closed $\theta_j\in[\alpha_j]$ with
$\theta_j\to\theta$ smoothly. Fix $j$ so large that
\[
 \frac{V_g([\alpha_j])}{D_g([\alpha_j],\kappa)}>\rho,
 \qquad \theta-\theta_j\ge-(\rho-\delta)\kappa.
\]
For this fixed class $[\alpha_j]$, by ~\eqref{ineq:L^(-n-d)} we can choose a finite $L$ with $\rho_L([\alpha_j])>\rho$. For such fixed class $[\alpha_j]$ and constant $L>0$, we
take $Q_L$ from \eqref{eq:raisedcurrent}. Define
\[
 T=Q_L+\rho_L\kappa+\theta-\theta_j.
\]
Then $[T]=[\alpha]$ and
\[
 T-\delta\kappa=Q_L+\zeta,\qquad
 \zeta=(\rho_L-\delta)\kappa+\theta-\theta_j
       \ge(\rho_L-\rho)\kappa>0.
\]
Then Lemma~\ref{lem:geometry} gives the cone inclusion  $(T-\delta\kappa)_{\ac}\in
 \overline{\Ccal_{p_L}(\chi)}\subset
 \overline{\Ccal_g(\chi)}\,$ almost everywhere.  The lower bound $T-\delta\kappa \geq -C\chi$ follows from
$Q_L\ge c_L\chi$. Finally, the $\partial\bar\partial$ lemma
gives $T=\theta+\ddc u$, where $u$ is
quasi-plurisubharmonic.
\end{proof}

\section{Regularization and gluing}\label{sec:gluing}

We adapt Richberg's regularized maximum construction
\cite{Richberg,DemaillyBook} to a convex spectral set.
For regularization of positive currents and plurisubharmonic
functions, see \cite{DemaillyRegularization,BlockiKolodziej}.
For the gluing step, we follow the gluing strategy developed in
\cite[Section~5]{Chen} and \cite[Section~6]{CLT}.
\subsection{Preservation of linear inequalities under regularized maximum}

Let $K\subset\Herm(n)$ be nonempty, closed, and convex, with
$K+\{P\ge0\}\subset K$. For $B\geq 0$, set
$h_K(B)=\inf_{A\in K}\tr(BA)$. The separation theorem gives
\begin{equation}\label{eq:halfspaces}
 K=\bigcap_{\substack{B\ge0\\h_K(B)>-\infty}}
       \{A:\tr(BA)\ge h_K(B)\}.
\end{equation}
Indeed, if $A_0\notin K$, separation gives
$B\in\Herm(n)\setminus\{0\}$ and $b\in\mathbb R$ such that
\[
 \tr(BA_0)<b\le\tr(BA)\qquad(A\in K).
\]
Fix $A_1\in K$. Since $A_1+tP\in K$ for any $P\ge0$ and $t\ge0$, we have
\(
 b\le\tr(BA_1)+t\tr(BP),\) hence \(\tr(BP)\ge0.\) for any $P\geq0$.
Thus $B\ge0$ and
$\tr(BA_0)<b\le h_K(B)$. This proves \eqref{eq:halfspaces}.

Fix a constant Hermitian form $\upsilon$ in a coordinate domain.
Let $T$ be a current of order zero with positive singular part,
and write its matrix measure as $A\,dx+dM_{\mathrm s}$, with
$dM_{\mathrm s}\ge0$. Then $A\in K(\upsilon)$ almost everywhere
if and only if
\begin{equation}\label{eq:supportdistribution}
 \tr(BT)\ge h_{K(\upsilon)}(B)\,dx
\end{equation}
in the distribution sense, for every $B\ge0$ in the countable family for $K(\upsilon)$.
Locally if $T=\sqrt{-1}\partial\bar{\partial}p$, consider the conditions
\begin{equation}\label{eq:local-linear-tests}
 \ddc p-\eta\upsilon\ge0,\qquad
 L_B p:=\tr(B\ddc p)\ge h_{K(\upsilon)}(B)+\eta\tr(B\upsilon).
\end{equation}
They are preserved by translation and normalized nonnegative
convolution.

They are also preserved by a convex regularized maximum.
Fix an even nonnegative function
$\vartheta\in C_c^\infty((-1,1))$ with
$\int_{\mathbb R}\vartheta=1$. For $\eta>0$, define
\[
 M_\eta(t_1,\ldots,t_N)
 =\int_{\mathbb R^N}
   \max_{1\le j\le N}(t_j+\eta s_j)
   \prod_{j=1}^N\vartheta(s_j)\,ds_1\cdots ds_N.
\]  The
construction is in \cite[Chapter I, Lemma 5.18]{DemaillyBook},
which is part of Richberg's regularization method \cite{Richberg}. One has $ \frac{\partial M_\eta}{\partial t_j}\ge0$, $\sum_j \frac{\partial M_\eta}{\partial t_j}=1$, and
$\left(\frac{\partial^2 M_\eta}{\partial t_{j}\partial{k}}\right)\ge0$; hence if $p_1,\ldots,p_N$ satisfies~\eqref{eq:local-linear-tests}, then \begin{equation}\label{eq:maxhessian}
 L_BM_\eta(p_1,\ldots,p_N)
 =\sum_j \frac{\partial M_\eta}{\partial t_j}L_Bp_j
       +\tr\left(B\Big(\sqrt{-1}\sum_{j,k} \frac{\partial^2 M_\eta}{\partial t_{j}\partial{k}}
                              \partial p_j\wedge\bar\partial p_k\Big)\right)
\end{equation}
also satisfies \eqref{eq:local-linear-tests}.

\subsection{The gluing construction}

We regularize the current outside a Lelong level set.
Near that set we keep a prescribed smooth potential.
The following theorem glues these potentials so that the resulting
potential agrees with the prescribed smooth potential near the
Lelong level set. We follow the argument of~\cite{Chen}.

\begin{theorem}\label{thm:gluing}
Let $(Y,\chi)$ be a compact K\"ahler manifold of dimension $n$. Let
$K\subset\Herm(n)$ be closed, convex, and unitarily invariant.
Assume $\Int K\ne\varnothing$ and $K+\{P\ge0\}\subset K$.
Let $T=\theta+\ddc u$, where $\theta$ is smooth and closed and
$u$ is quasi-plurisubharmonic. Suppose that, for some $\eta>0$,
\begin{equation}\label{eq:gluinginput}
 T-\eta\chi\ge0,\qquad
 (T-\eta\chi)_{\ac}\in K(\chi)\quad\text{a.e.}
\end{equation}
There is $c_*>0$ with the following property. Let $0<c\le c_*$
and $E_c=\{x:\nu(T,x)\ge c\}$. If  $v\in C^{\infty}(O)$ for a neighborhood  $O$ of
$E_c$ and satisfies $\theta+\ddc v\in\Int K(\chi)$ on $O$, then there is
$w\in C^\infty(Y)$ such that
\[
 \theta+\ddc w\in\Int K(\chi),\qquad w=v\quad\text{near }E_c.
\]

More generally, assume \eqref{eq:gluinginput} and let
$E\subset E_c$ be closed. Let $\varrho$ be a smooth real
$(1,1)$-form satisfying $0\le\varrho\le\chi$ and
$\varrho>0$ on $Y\setminus E$.
If $v\in C^\infty(O)$ for a neighborhood $O$ of $E_c$ and
\(
 \theta+\ddc v\in\Int K(\varrho)
\) on \(O\setminus E\),
then there exists $w\in C^\infty(Y)$ such that
\[
 \theta+\ddc w\in\Int K(\varrho)
 \quad\text{on }Y\setminus E,
 \qquad w=v\quad\text{near }E_c.
\]
\end{theorem}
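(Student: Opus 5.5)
We follow the standard Demailly--Richberg gluing scheme, as in \cite[Section~5]{Chen}. The plan has four steps: regularize the potential $u$ away from the high Lelong set, build a Lipschitz-type function that is very negative near $E_c$, glue it to $v$ by a regularized maximum, and check that the cone condition survives every operation.

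\textbf{Step 1 (regularization away from $E_c$).} Apply Demailly's regularization \cite{DemaillyRegularization} to $u$ on $Y$. This produces smooth functions $u_\varepsilon\downarrow u$ with
\[
 \theta+\ddc u_\varepsilon\ge -\lambda_\varepsilon\chi,\qquad \lambda_\varepsilon(x)\to\nu(T,x)
\]
locally uniformly. Because of the strict margin $\eta$ in \eqref{eq:gluinginput}, we only need to lose less than $\eta/2$.

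The linear-test formulation \eqref{eq:local-linear-tests}, \eqref{eq:halfspaces} is stable under mollification. Here we argue locally in coordinate balls as in Lemma~\ref{lem:current-cone-mollification}, with a constant form $\upsilon\le\chi$ that is close to $\chi$. The absolutely continuous part then satisfies the finitely many linear tests with margin $\eta$, and the positive singular part only helps, since $B\ge0$. Consequently the local convolutions $u*\rho_\varepsilon$ satisfy
\[
 \theta+\ddc(u*\rho_\varepsilon)-\tfrac\eta2\chi\in K(\chi).
\]
The error in this statement comes from the variation of $\chi$ and $\theta$ on small balls. We then patch these local convolutions by the regularized maximum $M_\eta$ of Demailly's construction, after adding small quadratic corrections as in Richberg's method. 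Formula \eqref{eq:maxhessian} shows that the linear tests and $\ddc p-\eta\upsilon\ge0$ are preserved. The result is a smooth $\tilde u$ on $Y\setminus E_c$, or better on $\{\nu<c\}$, with
\[
 \theta+\ddc\tilde u\in\Int K(\chi)\quad\text{there},\qquad \tilde u\to-\infty \text{ near } E_c.
\]
The constant $c_*$ is chosen, say $c_*<\eta/(2C_0)$ for a curvature constant $C_0$ of $\chi$, so that the Lelong-number loss $\lambda_\varepsilon\chi$ in the Kiselman--Demailly regularization stays below the margin outside $E_c$.

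\textbf{Step 2 (gluing near $E_c$).} Set
\[
 w=M_{\eta'}\bigl(\tilde u,\;v-A\bigr)
\]
for a large constant $A$. Near $E_c$ we have $\tilde u\to-\infty$ while $v-A$ stays bounded, so $w=v-A$ there. Away from $O$ we need $\tilde u>v-A+1$ on $\partial O'$ for some $O'\Subset O$; taking $A$ large gives this on the compact set $\partial O'$, where $\tilde u$ is bounded below. The final potential is $w+A$, which agrees with $v$ near $E_c$ after reabsorbing the constant. On the overlap both entries satisfy the interior linear tests, so \eqref{eq:maxhessian} gives $\theta+\ddc w\in\Int K(\chi)$. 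This uses convexity and invariance under adding semipositive matrices: the gradient term is semipositive and the weights are convex.

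\textbf{Step 3 (the $\varrho$ version).} The argument is the same, with $\chi$ replaced by $\varrho$ on $Y\setminus E$. Since $\varrho\le\chi$, the min--max principle and the inclusion $K+\{P\ge0\}\subset K$ give $K(\chi)\subset K(\varrho)$ wherever $\varrho>0$, as in the proof of Lemma~\ref{lem:current-cone-mollification}. So $\tilde u$ also lies in $\Int K(\varrho)$ off $E$. The gluing happens where both $\tilde u$ and $v$ are admissible for $\varrho$, and near $E$ we keep $w=v$.

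\textbf{Main obstacle.} The delicate part is Step 1. We must get a \emph{global} smooth approximation that is strictly in $\Int K(\chi)$ outside $E_c$ while only knowing that the absolutely continuous part lies in $K$. The Richberg patching must also control how the errors from the frame variation and the Lelong-number loss interact with the nonlinear convex set $K$. The plan is to handle this by reducing everything to countably many linear tests, which are preserved by convolution and regularized maximum, and by spending half of the margin $\eta$ to absorb these errors.
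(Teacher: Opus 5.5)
\textbf{Verdict: there is a genuine gap in Step 1.} Steps 2 and 3 are essentially the paper's final regularized-maximum gluing. Step 1, however, carries all the content of the theorem, and it does not work as proposed.

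\textbf{Why Richberg patching fails here.} Richberg's patching needs the local smoothings (after subtracting local potentials $h_i$ of $\theta$) to be uniformly close on chart overlaps, so that small cutoff corrections absorb the discrepancy. For a quasi-psh $u$ with poles or discontinuities this is false. For $u=\nu\log|z|$, the convolutions at the same scale $\varepsilon$ in the coordinates $z$ and $z'=2z$ differ by $\nu\log 2$ at the origin, for every $\varepsilon$. In general the discrepancy between charts is of order $(\log L)\,a_j(x,Lr)$, where $a_j(x,r)\searrow\nu(T,x)$, and it does not tend to zero.

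So ``small quadratic corrections'' cannot absorb it. Corrections of size comparable to $c\log L$ are needed, and their Hessian cost must be paid out of the margin $\eta$. This trade-off is what actually determines $c_*$: the paper uses $g_i=A_*cb_i$ with $A_*>2\log L+3$ and $c_*\le\eta/(4A_*B_0)$.

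\textbf{Why your source of $c_*$ is unavailable.} You take $c_*$ from the loss $\lambda_\varepsilon\chi$ in Demailly's global regularization theorem. That theorem passes through the Kiselman--Legendre infimum in $t$. Kiselman's minimum principle preserves plurisubharmonicity, but not the linear tests $\tr(B\ddc p)\ge h_K(B)$ in the $x$-variables alone, so it gives no control of $K$. Also, $\lambda_\varepsilon$ converges to $\nu(T,\cdot)$ only pointwise and monotonically, not locally uniformly. Finally, nothing in your construction produces a $\tilde u$ that is smooth off $E_c$ and tends to $-\infty$ along $E_c$; a fixed-scale regularization is finite everywhere.

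\textbf{The missing ingredient} is a Lelong-number-controlled comparison at one fixed scale. The paper does this as follows.
\begin{enumerate}
\item Replace convolution by $M_i(x,r)=\sup_{|a|\le r}p_i(x+a)$. This is continuous, convex and increasing in $\log r$, and still satisfies the frozen linear tests, since it is a limit of regularized maxima of translates of convolutions.
\item Ball inclusion and convexity in $\log r$ give $v_i^r\le v_j^{Lr}+Br$ and $v_i^r-v_j^r\le(\log L)\,a_j(x,Lr)+Br$.
\item Fix a single small $r$ and normalize by $-a\log r$, with $\max_{Y\setminus W}\nu<a'<a<c$. On $E_c$ this gives $v-F_i^r\ge(a-c)\log r-C$. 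Near $\partial U$ it gives $F_i^r-v\ge(a'-a)\log r-C$. Both are large for small $r$.
\item Near $\partial D_i$, the cutoff of size $A_*c$ lets another chart's branch dominate wherever $a_j(x,Lr)\le 2c$, while $v$ dominates wherever $a_j(x,Lr)>2c$.
\end{enumerate}

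\textbf{What this means for Steps 2 and 3.} Your constant $A$ in Step 2 plays the role of $-a\log r$. It cannot simply be taken ``large''; it must be tied to the scale $r$ and to the thresholds $a'<a<c$ through these estimates. In Step 3, the inclusion $K(\chi)\subset K(\varrho)$ is only valid for semipositive matrices, and you need its interior version. Both hold because the glued branches are positive definite with a margin.
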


\begin{proof}
Since $T$ is closed and positive, $E_c$ is a proper analytic
subset for $c>0$ by Siu's theorem \cite{Siu}.

If $A\ge0$ and $0<\upsilon\le\gamma$, the min-max principle gives
\[
 \lambda_j(\upsilon^{-1}A)\ge\lambda_j(\gamma^{-1}A),
 \qquad j=1,\ldots,n,
\] where $\lambda_j$ denotes the $j$-th smallest eigenvalue counted with multiplicity.
Thus $A\in K(\gamma)$ implies $A\in K(\upsilon)$.
If $A>0$ and $A\in\Int K(\gamma)$, the same comparison gives $A\in\Int K(\upsilon)$.
Choose $R>0$ with $RI\in\Int K$. If
$(1-e)\gamma\le\upsilon\le\gamma$ and
$A\ge0$ belongs to $K(\upsilon)$, then
$A/(1-e)\in K(\gamma)$, so
\begin{equation}\label{eq:metricloss}
 A+eR\gamma
 =(1-e)\frac{A}{1-e}+eR\gamma\in\Int K(\gamma).
\end{equation}
Fix $0<e<1/4$ with $eR<\eta/8$.

Take finitely many coordinate domains $U_i$ with constant
Hermitian forms $\upsilon_i$ satisfying
$(1-e)\chi\le\upsilon_i\le\chi$.
Let $\ddc h_i=\theta$ and $p_i=u+h_i$. By
\eqref{eq:gluinginput}, metric comparison, and the property of $K$,
\begin{equation}\label{eq:frozenineq}
 \ddc p_i-\eta\upsilon_i\ge0,\qquad
 (\ddc p_i-\eta\upsilon_i)_{\ac}\in K(\upsilon_i).
\end{equation}
In particular, $p_i$ is plurisubharmonic.

Choose $V_i\Subset D_i\Subset U_i$ so that the $V_i$ cover $Y$.
On each $D_i$, for any small $0<r<\frac{1}{4}\text{dist}(D_i,\partial U_i)$, we set the functions
\[
 M_i(x,r)=\sup_{|a|\le r}p_i(x+a),\qquad
 v_i^r=M_i(\cdot,r)-h_i.
\]
The function $M_i(x,r)$ is continuous in $x$ for fixed $r>0$, and is increasing
and convex in $\log r$; see
\cite[Section 2]{DemaillyRegularization}.

 $M_i(\cdot,r)$ also satisfies \eqref{eq:frozenineq}.
Indeed, consider the normalized nonnegative radial convolution 
$p_{i,\varepsilon}$ and let
$q_\varepsilon=\sup_{|a|\le r}p_{i,\varepsilon}(\cdot+a)$.
The submean inequality gives
\(
 M_i(\cdot,r)\le q_\varepsilon\le M_i(\cdot,r+\varepsilon).
\)
On each compact subset, choose finitely many translations with
\(
 0\le q_\varepsilon-
       \max_{1\le\nu\le N}p_{i,\varepsilon}(\cdot+a_\nu)\le\delta.
\)
By~\eqref{eq:maxhessian}, their regularized maximum
\[
 Q_{\varepsilon,\delta,\tau}
 =M_\tau\bigl(p_{i,\varepsilon}(\cdot+a_1),\ldots,
             p_{i,\varepsilon}(\cdot+a_N)\bigr)
\]
satisfies \eqref{eq:frozenineq} and
\[
 M_i(\cdot,r)-\delta
 \le Q_{\varepsilon,\delta,\tau}
 \le M_i(\cdot,r+\varepsilon)+\tau.
\]
Letting $\varepsilon,\delta,\tau\downarrow0$ proves
\eqref{eq:frozenineq} for $M_i$.

Let $s_{i,\varepsilon}=M_i(\cdot,r)*\rho_\varepsilon$,
where  $\rho_\varepsilon\ge0$ is a smooth convolution kernel
supported in $B(0,\varepsilon)$ with integral one. Since $\upsilon_i$ is constant form, we write
\(
 \sigma:=\ddc s_{i,\varepsilon}
 =A+\eta\upsilon_i,\) where \(
 A:=\bigl(\ddc M_i(\cdot,r)-\eta\upsilon_i\bigr)*\rho_\varepsilon.
\)
The preserved inequalities give $A\ge0$ and $A\in K(\upsilon_i)$.
By \eqref{eq:metricloss},
\begin{equation}\label{eq:marginreturn}
 \begin{aligned}
 \sigma-\frac\eta2\chi
 &=(A+eR\chi)
   +\left(\eta\upsilon_i-\frac\eta2\chi-eR\chi\right)
   \in\Int K(\chi),\\
 \sigma&\ge\eta(1-e)\chi.
 \end{aligned}
\end{equation}

 Fix a common radius $R_0$, set\[
 a_i(x,r)=\frac{M_i(x,R_0)-M_i(x,r)}{\log(R_0/r)}.
\]
With Lelong numbers normalized by
$\nu(T,x)=\lim_{r\downarrow0}M_i(x,r)/\log r$, convexity gives
\(
 a_i(x,r)\searrow\nu(T,x)\) as \(r\downarrow0\).
Coordinate comparison and convexity give constants $L>1$ and $B$
such that, on $D_i\cap D_j$,
\begin{equation}\label{eq:overlap}
 v_i^r\le v_j^{Lr}+Br,\qquad
 v_i^r-v_j^r\le(\log L)a_j(x,Lr)+Br.
\end{equation}
Here the first inequality follows from inclusion of coordinate
balls and the smoothness of $h_i-h_j$; the second follows from
\[
 \frac{M_j(x,Lr)-M_j(x,r)}{\log L}
 \le
 \frac{M_j(x,R_0)-M_j(x,Lr)}{\log(R_0/(Lr))}
 =a_j(x,Lr).
\]

Take smooth cutoff functions $-1\le b_i\le0$, equal to $0$ on $V_i$
and to $-1$ near $\partial D_i$, with
$\ddc b_i\ge-B_0\chi$ for some $B_0\ge1$. Choose
\[
 A_*>2\log L+3,\qquad
 0<c_*\le\frac{\eta}{4A_*B_0},\qquad
 g_i=A_*cb_i.
\]
Then $\ddc g_i\ge-\eta\chi/4$, and these choices are independent
of $v$.

Suppose first that $E_c\ne\varnothing$. Choose
$E_c\subset W\Subset U\Subset O$, where $v$ is defined on $O$,
and numbers
\[
 \max_{Y\setminus W}\nu(T,\cdot)<a'<a<c.
\]
Monotonicity and compactness give
$a_i(x,Lr)\le a'$ outside $W$ for all sufficiently small $r$.

Let
\[
 F_i^r=v_i^r+g_i-a\log r
      =M_i(\cdot,r)-h_i+g_i-a\log r.
\]
By the definition of $a_i$,
\[
 M_i(x,r)=M_i(x,R_0)-a_i(x,r)\log(R_0/r).
\]
For sufficiently small $r>0$, using $a_i(x,r)\le a'$ near
$\partial U$ and $a_i(x,r)\ge c$ on $E_c$, together with
the uniform bounds for $M_i(x,R_0)$, $h_i$, $g_i$, and $v$,
we obtain for $x\in O$,
\[
 \begin{aligned}
 F_i^r-v
 &\ge(a'-a)\log r-C\longrightarrow+\infty
 &&\text{near }\partial U,\\
 v-F_i^r
 &\ge(a-c)\log r-C\longrightarrow+\infty
 &&\text{on }E_c.
 \end{aligned}
\]

Near $\partial D_i$, choose $j$ with $x\in V_j$.
If $a_j(x,Lr)>2c$, then $x\in W$ and
\[
 F_i^r\le v_j^{Lr}-a\log r+Br
       \le(2c-a)\log r+C,\qquad
 v-F_i^r\ge(a-2c)\log r-C\longrightarrow+\infty.
\]
If $a_j(x,Lr)\le2c$, then
\[
 F_j^r-F_i^r
 \ge A_*c-(\log L)a_j(x,Lr)-Br
 \ge(A_*-2\log L)c-Br>2c
\]
once $Br<c$. Choose $r>0$ sufficiently small so that $Br<c$
and each of the preceding lower bounds tending to $+\infty$
is greater than $4c$.
For this fixed $r$, continuity and the finiteness of the cover
give a neighborhood of $E_c$ on which
$v-F_i^r>4c$ for every available index $i$.

Take smooth convolutions $s_i$ of $M_i(\cdot,r)$ satisfying
$\lvert s_i-M_i(\cdot,r)\rvert<c/10$ near $\overline{D_i}$,
and set
\[
 f_i=s_i-h_i+g_i-a\log r.
\]
Then
\(
 |f_i-F_i^r|=|s_i-M_i(\cdot,r)|<c/10.
\)
Since $a\log r$ is constant in the spatial variables,
Equation~\eqref{eq:marginreturn} gives
\[
 \theta+\ddc f_i\in\Int K(\chi),\qquad
 \theta+\ddc f_i
 \ge\eta\left(\frac34-e\right)\chi>\frac\eta2\chi.
\]
For $0<\tau<c/10$ and $I(x)=\{i:x\in D_i\}$, define
\[
 w(x)=
 \begin{cases}
 M_\tau\bigl((f_i(x))_{i\in I(x)},v(x)\bigr),
       &x\in U,\\[2mm]
 M_\tau\bigl((f_i(x))_{i\in I(x)}\bigr),
       &x\in Y\setminus U.
 \end{cases}
\]
To show $w(x)$ is smooth, recall the property of regularized maximum
\cite[Chapter~I, Lemma~5.18(c)]{DemaillyBook}.
That is, if \(
 t_i+2\tau<\max_{j\ne i}t_j\), then \(M_\tau(t_1,\ldots,t_N)=M_\tau\bigl((t_j)_{j\ne i}\bigr).
\)
For $x_0\in\partial D_i$, the preceding estimates give
\[
 f_i+2\tau<f_j
 \quad\text{for some }j\text{ with }x_0\in V_j,
 \qquad\text{or}\qquad
 f_i+2\tau<v
 \quad\text{with }x_0\in W,
\]
on a sufficiently small neighborhood of $x_0$.
Likewise, for $x_0\in\partial U$,
\[
 v+2\tau<f_j
 \quad\text{near }x_0
 \quad\text{for some }j\text{ with }x_0\in V_j.
\]
Thus every branch whose domain has $x_0$ on its boundary
can be deleted, while its dominating branch remains available
near $x_0$. Since there are finitely many domains, $w$ locally
equals the regularized maximum of a fixed finite family of
smooth functions. Hence $w\in C^\infty(Y)$.

Near $E_c$, we have $f_i+2\tau<v$ for every
available index $i$. Therefore
\[
 w=M_\tau(v)
   =\int_{\mathbb R}(v+\tau s)\vartheta(s)\,ds
   =v.
\]
Finally, by \eqref{eq:maxhessian},
$\theta+\ddc w\in\Int K(\chi)$.

If $E_c=\varnothing$, take $a=0$ in the definitions of
$F_i^r$ and $f_i$. Compactness gives $a_i(x,Lr)<c$ everywhere
for small $r$. The same boundary comparison applies, and we set
\[
 w(x)=M_\tau\bigl((f_i(x))_{i\in I(x)}\bigr).
\]
Then $w(x)$ is smooth and satisfies the required conclusion.

For the relative assertion, each $\theta+\ddc f_i$ is positive
definite and belongs to $\Int K(\chi)$ on $D_i$.
Since $0<\varrho\le\chi$ on $Y\setminus E$, metric comparison gives
\[
 \theta+\ddc f_i\in\Int K(\varrho)
 \qquad\text{on }D_i\setminus E.
\]
By assumption,
\(
 \theta+\ddc v\in\Int K(\varrho)
 \) on \(O\setminus E\). By convexity, the upper-set
property, and \eqref{eq:maxhessian}, the same construction gives
\[
 \theta+\ddc w\in\Int K(\varrho)
 \qquad\text{on }Y\setminus E.
\]
The domination near $E_c$ still gives $w=v$ there.
\end{proof}

\section{Numerical perturbation and local extension}\label{sec:local}

We construct the forms contained in $C_g$  near analytic subsets by induction on
their dimension. As in the arguments of Demailly--P\u{a}un,
Collins--Tosatti and Chen \cite{DP, CollinsTosatti, Chen}, we first work on a resolution.  We construct them on resolutions
and then extend them in the normal directions.

The following lemma
provides the required perturbation on the resolution.
On an $m$-dimensional manifold, $\Phi_q^g$ is defined with that
dimension $m$.

\begin{lemma}\label{lem:quantlift}
Let $W$ be a compact K\"ahler manifold of dimension $m$, and let
\(
 g(t)=t^e-\sum_{i=0}^{e-1}\binom ei c_it^i
\)
be right-Noetherian, where $1\le e\le m$. Set $k=m-e$.
Let $\xi,\gamma$ be smooth closed real $(1,1)$-forms satisfying
\(
 \gamma\ge0\) and  \(-C_\xi\gamma\le\xi\le C_\xi\gamma,
\)
and let $\rho$ be K\"ahler. Assume that
\begin{equation}\label{eq:liftproper}
 \int_Z\rho^{p-q}\wedge\Phi_q^g(\xi,\gamma)\ge0,
 \qquad k\le q\le p,
\end{equation}
for every proper irreducible analytic subvariety $Z\subset W$
of dimension $p$, and that
\begin{equation}\label{eq:liftwhole}
 J_j:=\frac{e!}{(e-j)!}
       \int_W\rho^j\wedge\Phi_{m-j}^g(\xi,\gamma)>0,
 \qquad 0\le j\le e.
\end{equation}
Set $r=\max\{1,e-1\}$ and choose $N>m+kr$.
For sufficiently large fixed $K_0$ and sufficiently small $s>0$, let
\[
 L_s=K_0s^{-r},\qquad
 \xi_s=\xi+s\rho,\qquad
 \gamma_s=\gamma+s^N\rho,\qquad
 p_s(t)=(t+L_s)^kg(t).
\]
Then $p_s$ is right-Noetherian, and
\begin{equation}\label{eq:liftmargin}
 \int_Z\rho^{p-q}\wedge\Phi_q^{p_s}(\xi_s,\gamma_s)
 \ge\frac12s^q\int_Z\rho^p,
 \qquad 0\le q\le\min\{p,m-1\},
\end{equation}
for every irreducible analytic subvariety $Z\subset W$,
including $W$. The choices of $K_0$ and $s$ are independent of $Z$.
Moreover, with
\[
 V_s=\int_W\Phi_m^{p_s}(\xi_s,\gamma_s),\qquad
 D_s=m\int_W\rho\wedge\Phi_{m-1}^{p_s}(\xi_s,\gamma_s),
\]
we have
\begin{equation}\label{eq:liftlimits}
 L_s^{-k}V_s\longrightarrow J_0,\qquad
 L_s^{-k}D_s\longrightarrow J_1.
\end{equation}
The translation eliminating the coefficient of degree $m-1$ is
\(
 c_s=\frac{ec_{e-1}-kL_s}{m}
\). We have \(|c_s|s^N\to 0.
\)
\end{lemma}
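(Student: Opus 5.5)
The plan is to reduce every quantity built from $p_s$ to quantities built from $g$ by expanding in powers of $L_s$, and then to expand the $g$-quantities in powers of $s$. Two of the assertions are direct computations, and I would dispose of them first. Expanding $(t+L_s)^kg(t)$ shows that its coefficient of $t^{m-1}$ is $kL_s-ec_{e-1}$. The normalization $-\binom{m}{m-1}c_s=kL_s-ec_{e-1}$ then gives $c_s=(ec_{e-1}-kL_s)/m$. Since $|c_s|s^N\sim K_0s^{N-r}$ and $N>r$, we get $|c_s|s^N\to0$. For right-Noetherianity of $p_s$, I would invoke the same product statement used in Lemma~\ref{lem:geometry}, namely \cite[Lemma~3.1]{CNX} together with \cite{FangMaGarding}. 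Directly: the factor $(t+L)^k$ only adds roots far to the left, and for large $L$ the largest roots of $p_s^{(j)}$ stay close to those of $g^{(j)}$, or near $-L$ once $j>e$. The chain $r(p_s)\ge r(p_s')\ge\cdots$ therefore persists.

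The core step is a two-scale expansion of $\Phi_q^{p_s}$. By \eqref{binomial} and the derivative asymptotics computed in the proof of Lemma~\ref{lem:geometry}, the forms have the following structure. For $q\ge k$,
\[
 \Phi_q^{p_L}(\eta,\gamma)=\sum_{a=0}^{k} b_{q,a}L^{k-a}\,\gamma^{a}\wedge\Phi_{q-a}^{g}(\eta,\gamma)+(\text{terms with }\eta\text{-factors}),
\]
with $b_{q,0}=1$. For $q<k$ the leading part is a positive multiple of $L^{q}\gamma^q$. I would make this exact using the translation identity \eqref{eq:translationforms} and the multiplicativity of polarization under products with $(t+L)^k$; this is the formula $\partial_IF_L=L^k\partial_IF+O(L^{k-1})$ written at the level of forms. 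Substituting $\eta=\xi_s=\xi+s\rho$ and $\gamma=\gamma_s$, I would expand each $\Phi^g_{q'}(\xi_s,\gamma_s)$ by \eqref{eq:derivativetaylor}. This gives a sum of terms $s^j\rho^j\wedge\Phi^g_{q'-j}(\xi,\gamma_s)$, and replacing $\gamma_s$ by $\gamma$ costs $O(s^N)$ times $\rho$-dominated forms.

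I would then sort the resulting terms, paired with $\rho^{p-q}$ on $Z$, into three groups. The first group consists of nonnegative main terms. Their integrals are $\ge0$ by \eqref{eq:liftproper} when $Z$ is proper, and strictly positive with lower bound proportional to $J_j$ when $Z=W$ by \eqref{eq:liftwhole}. The second group is the single clean term $s^q\rho^q$, coming from the monic top part of $\xi_s^q$. It supplies the margin $s^q\int_Z\rho^p$. The third group consists of error terms. Each error term contains at least one factor $\xi$ or $\gamma_s-\gamma$ and carries at most the power of $L_s$ that is paired with some main term. I would bound these errors using $-C_\xi\gamma\le\xi\le C_\xi\gamma$, which lets every $\xi$ factor be traded for a $\gamma$ factor, and using $\gamma_s\le\gamma+s^N\rho$.

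The main obstacle is the exponent bookkeeping. I must show that every error term is at most $\tfrac12s^q\int_Z\rho^p$ plus a small fraction of the main terms, uniformly in $Z$. That uniformity comes from intersection numbers $\int_Z\rho^{a}\gamma^{b}$ being bounded by $C\int_Z\rho^p$, since $\gamma\le C\rho$. The choice $L_s=K_0s^{-r}$ with $r=\max\{1,e-1\}$ is designed so that each power $L_s^{-1}$ gains $s^r$. This beats the at most $e-1$ missing factors of $s$ in the $g$-expansion, and a large $K_0$ absorbs the constants. The condition $N>m+kr$ ensures that the $\gamma_s$-perturbation errors, which can be multiplied by up to $L_s^{k}\sim s^{-kr}$, are $o(s^m)\le o(s^q)$. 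I expect the range $q<k$ to need separate care, since there $L_s^q\gamma_s^q$ must dominate; I would try to handle it by the same absorption.

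Finally, for \eqref{eq:liftlimits} I would divide the expansion for $q=m$, and the one for $q=m-1$ wedged with $m\rho$, by $L_s^k$. The $a=0$ terms converge to $J_0$ and $J_1$ as $s\to0$, because $\xi_s\to\xi$ and $\gamma_s\to\gamma$ smoothly. The $a\ge1$ terms carry $L_s^{-a}=O(s^{ra})\to0$ with bounded coefficients.
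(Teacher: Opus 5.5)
Your architecture matches the paper's: Taylor expansion in $s$ and $L$, nonnegative main terms from the two numerical hypotheses, the clean term $s^q\rho^q$, $L_ss^{a}\ge K_0$, $N>m+kr$ for the $\gamma_s$-perturbation, and coefficientwise limits. There is a genuine gap, however. The step you defer as ``exponent bookkeeping'' is the real content of the lemma, and the mechanism you give for it does not work.

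For $k\ge1$ and $q\ge k$, the errors carry a factor $L_s^{k-1}\ge1$. Suppose you control them, as you propose, through $\int_Z\rho^{a}\wedge\gamma^{b}\le C\int_Z\rho^p$. Then nothing can absorb them:
\begin{enumerate}
\item $\tfrac12 s^q\int_Z\rho^p$ is too small, since $s^q\to0$.
\item On a proper $Z$, the main terms $L_s^ks^l\int_Z\rho^{p-q+l}\wedge\Phi^g_{q-l}(\xi,\gamma)$ with $q-l>k$ are only known to be $\ge0$.
\item The one main term with an explicit lower bound is the one with $q-l=k$. It is proportional to $I_Z:=\int_Z\gamma^k\wedge\rho^{p-k}$, which can be far smaller than $\int_Z\rho^p$. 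This happens, for example, when $\gamma=\pi^*\chi$ degenerates along $Z$, which is exactly the situation where the lemma is used.
\end{enumerate}

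The missing idea is to compare the errors with $I_Z$ rather than with $\int_Z\rho^p$. Taylor-expand in $s$ first, writing $\Phi_q^{p_L}(\xi+s\rho,\gamma)=\sum_{l=0}^q\binom{q}{l}s^l\rho^l\wedge\Xi^L_{q-l}$ with $\Xi^L_b:=\Phi^{p_L}_b(\xi,\gamma)$. Only then expand each $\Xi^L_b$ in $L$.

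For $q\ge k$, put $a=q-k$. Since $\Phi^g_k(\xi,\gamma)=\gamma^k$, the leading part of the term $l=a$ is exactly $\mu_qL_s^ks^aI_Z$, where $\mu_q=\frac{q!\,e!}{m!\,(q-k)!}$. Every error with $l\le a$ comes from some $\Xi^L_{q-l}$ with $q-l\ge k$. After trading $\xi$ for $\gamma$, it is bounded by $CL_s^{k-1}\gamma^{q-l}$, and $\gamma\le C\rho$ then bounds its integral by $CL_s^{k-1}I_Z$. Since $a\le e-1\le r$, we have $L_ss^a=K_0s^{a-r}\ge K_0$. A large $K_0$ therefore absorbs all these errors into that single term, uniformly in $Z$ and even when $I_Z=0$. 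This is the reason for the choice $r=\max\{1,e-1\}$.

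The terms with $l>a$ are not errors. For $q-l<k$, $\Xi^L_{q-l}$ is pointwise strongly positive for large $L$, since its leading part is a positive multiple of $L^{q-l}\gamma^{q-l}$. The same observation settles the range $q<k$ with no absorption at all. If you expand in $L$ first, these positive leading parts sit in the subleading $L$-pieces, and you must not lump them with the errors.

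Two smaller points:
\begin{enumerate}
\item Your displayed expansion is not correct as written. The leading coefficient is $\mu_q$, not $1$; for instance $\mu_{m-1}=e/m$. With $b_{q,0}=1$ you would get $L_s^{-k}D_s\to m\int_W\rho\wedge\Phi^g_{m-1}(\xi,\gamma)$ instead of $J_1=e\int_W\rho\wedge\Phi^g_{m-1}(\xi,\gamma)$. Also, the subleading pieces are not of the form $\gamma^a\wedge\Phi^g_{q-a}$.
\item Your direct root-perturbation heuristic for right-Noetherianity of $p_s$ breaks down when some link is an equality $r(g^{(j)})=r(g^{(j+1)})$. Rely on the cited lemma instead.
\end{enumerate}
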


\begin{proof}
Right-Noetherianity of $p_s(t)$ follows from
\cite[Lemma~3.1]{CNX}. 
Write $g(t)=\sum_{i=0}^e a_it^i$, with $a_e=1$, and set
\[
 \Xi_a^L=\Phi_a^{(t+L)^kg}(\xi,\gamma),
 \qquad 0\le a\le m.
\]
Expanding $(t+L)^kg(t)$ and differentiating $m-a$ times gives
\begin{equation}\label{eq:liftcoefficients}
 \Xi_a^L
 =\sum_{\substack{0\le h\le k,\ 0\le i\le e\\i+h\ge m-a}}
   \binom kh L^{k-h}a_i
   \frac{a!\,(i+h)!}{m!\,(i+h-m+a)!}
   \xi^{i+h-m+a}\wedge\gamma^{m-i-h}.
\end{equation}
The leading terms correspond to $h=0$ when $a\ge k$, and
to $(h,i)=(k-a,e)$ when $a<k$. Thus
\begin{align}
 \Xi_a^L
 &=\mu_aL^k\Phi_a^g(\xi,\gamma)
       +O(L^{k-1})\gamma^a,
 &&a\ge k,\label{eq:liftlargea}\\
 \Xi_a^L
 &=b_aL^a\gamma^a+O(L^{a-1})\gamma^a,
 &&a<k,\label{eq:liftsmalla}
\end{align}
where
\(
 \mu_a=\frac{a!\,e!}{m!\,(a-k)!}\) and
 \(
 b_a=\frac{k!\,(m-a)!}{m!\,(k-a)!}.
\)
Here $\Xi_0^L=1$. If $k=0$, then $e=m$ and $\mu_a=1$,
so $\Xi_a^L=\Phi_a^g(\xi,\gamma)$.
The bounds  follow from $-C_\xi\gamma\le\xi\le C_\xi\gamma$;
when $\gamma\geq 0$ is degenerate, set $\gamma+\varepsilon\rho$
and let $\varepsilon\downarrow0$.

Fix $Z$ of dimension $p$ and $0\le q\le\min\{p,m-1\}$.
Taylor expansion gives
\begin{equation}\label{eq:lifttaylor}
 \Phi_q^{(t+L)^kg}(\xi+s\rho,\gamma)
 =\sum_{l=0}^q\binom ql s^l\rho^l\wedge\Xi_{q-l}^L.
\end{equation}
If $k=0$, all integrated terms are nonnegative by
\eqref{eq:liftproper} and \eqref{eq:liftwhole}.
If $q<k$, all terms are nonnegative for large $L$ by
\eqref{eq:liftsmalla}.
In both cases, the term $l=q$ contributes $s^q\int_Z\rho^p$.

It remains to consider $k>0$ and $q\ge k$.
Let $a=q-k$ and
\(
 I_Z=\int_Z\gamma^k\wedge\rho^{p-k}\ge0.
\)
For $l\le a$, the integrated leading terms in
\eqref{eq:liftlargea} are nonnegative by the hypotheses.
The contribution with $l=a$ is
\[
 \binom qa\mu_kL^ks^aI_Z
 =\mu_qL^ks^aI_Z,
 \qquad \binom q{q-k}\mu_k=\mu_q.
\]
Choose $C\ge1$ with $\gamma\le C\rho$. For $l\le a$,
\[
 \int_Z\rho^{p-q+l}\wedge\gamma^{q-l}
 \le C^{q-l-k}I_Z.
\]
Hence, for $0<s\le1$, the sum of the absolute values of the
integrated errors is bounded by
\[
 C_0L^{k-1}\sum_{l=0}^a\binom ql s^l
       \int_Z\rho^{p-q+l}\wedge\gamma^{q-l}
 \le C_1L^{k-1}I_Z,
\]
where $C_1$ is independent of $Z,p,q,s,L$.
Choose
\(
 K_0>\max\left\{1,\max_{k\le q\le m-1}\frac{2C_1}{\mu_q}\right\}.
\)
Since $a\le e-1\le r$,
$ L_ss^a=K_0s^{a-r}\ge K_0,$ and
 $\mu_qL_s^ks^aI_Z-C_1L_s^{k-1}I_Z
 \ge\frac{\mu_q}{2}L_s^ks^aI_Z\ge0.$
The terms with $l>a$ are nonnegative for small $s$.
In particular, the term $l=q$ remains
$s^q\int_Z\rho^p$, even when $I_Z=0$.
Thus, in all cases,
\[
 \int_Z\rho^{p-q}\wedge\Phi_q^{p_s}(\xi_s,\gamma)
 \ge s^q\int_Z\rho^p.
\]

The forms $\xi_s,\gamma$ are uniformly bounded relative to $\rho$,
and the coefficients of $p_s$ are bounded by $CL_s^k$.
Expanding $\gamma_s=\gamma+s^N\rho$ therefore gives
\[
 -CL_s^ks^N\rho^q
 \le
 \Phi_q^{p_s}(\xi_s,\gamma_s)-\Phi_q^{p_s}(\xi_s,\gamma)
 \le CL_s^ks^N\rho^q.
\]
Since
\[
 \frac{L_s^ks^N}{s^q}
 =K_0^ks^{N-kr-q}\longrightarrow0
 \qquad(0\le q\le m-1),
\]
the perturbation is bounded in both directions by
$\frac12s^q\rho^q$ for sufficiently small $s$, independently of $Z$.
Wedging with $\rho^{p-q}$ and integrating proves
\eqref{eq:liftmargin}.

Finally,
\[
 L_s^{-k}p_s(t)=\left(1+\frac{t}{L_s}\right)^kg(t)
 \longrightarrow g(t)
\]
coefficientwise, while $\xi_s\to\xi$ and $\gamma_s\to\gamma$
smoothly. Consequently,
\(
 L_s^{-k}\Phi_m^{p_s}(\xi_s,\gamma_s)
 \longrightarrow\Phi_m^g(\xi,\gamma),\) and
 \(L_s^{-k}m\Phi_{m-1}^{p_s}(\xi_s,\gamma_s)
 \longrightarrow e\Phi_{m-1}^g(\xi,\gamma).
\)
Integrating gives \eqref{eq:liftlimits}.
The coefficient of $t^{m-1}$ in $p_s$ is $kL_s-ec_{e-1}$, so
\[
 c_s=\frac{ec_{e-1}-kL_s}{m},
 \qquad
 |c_s|s^N=O(s^{N-r})\longrightarrow0,
\]
where $N>r$ follows from $N>m+kr$.
\end{proof}

\subsection{Extending in the normal directions}
Once the tangential condition is satisfied, a large positive Hessian
in the normal directions gives an admissible form near the subvariety.
We need this extension to agree with a prescribed potential near the
singular set.
\begin{lemma}[Local extension]\label{lem:normal}
 Let $\theta$ be a smooth closed real $(1,1)$-form on compact K\"ahler manifold $X^n$. Let $f$ be a monic right-Noetherian polynomial of degree
$1\le d\le n$ with constant real coefficients.
Let $S\subset Z\subset X$ be closed analytic subsets, with $S$
containing the singular loci and the intersections of distinct
irreducible components of $Z$.
Suppose $v$ is smooth near $S$ and
$\theta+\ddc v\in\Ccal_f(\chi)$ there.

For each component $Z_j$, let $p_j=\dim Z_j$ and $c_j=n-p_j$,
and choose a resolution $\pi_j:Y_j\to Z_j$ that is an
isomorphism on $Z_j\setminus\pi_j^{-1}(S)$.
Suppose $u_j\in C^\infty(Y_j,\mathbb R)$ satisfies
$u_j=\pi_j^*v$ near $\pi_j^{-1}(S)$.
Whenever $c_j<d$, assume additionally that
\[
 g_j=\frac{(d-c_j)!}{d!}f^{(c_j)},\qquad
 \pi_j^*\theta+\ddc u_j\in\Ccal_{g_j}(\pi_j^*\chi)
 \quad\text{on }Y_j\setminus\pi_j^{-1}(S).
\]
Then there is a smooth real function $w$ near $Z$ such that
\[
 \theta+\ddc w\in\Ccal_f(\chi),
 \qquad w=v\quad\text{near }S.
\]
\end{lemma}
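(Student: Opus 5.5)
The plan follows the standard normal-extension argument of Demailly--P\u{a}un and Collins--Tosatti, adapted to the cone $\Ccal_f$. I work component by component on $Z_j\setminus S$ and patch the pieces with the regularized maximum $M_\tau$. Since $M_\tau$ preserves membership in the open convex upper set $\Ccal_f(\chi)$, by \eqref{eq:maxhessian} and Lemma~\ref{lem:geometry}, the patching is harmless.

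\textbf{Extension along one component.} Away from the other components, $Z_j\setminus S$ is a smooth submanifold, and $\pi_j$ identifies it with $Y_j\setminus\pi_j^{-1}(S)$. So $u_j$ descends to a smooth function $\tilde u_j$ on $Z_j\setminus S$ with $\tilde u_j=v$ near $S$. Cover $Z_j\setminus S$ by coordinate charts $(z',z'')$ in which $Z_j=\{z''=0\}$. On each chart, extend $\tilde u_j$ constantly in $z''$ and set
\[
 w_j=\tilde u_j(z')+A\,|z''|^2 ,
\]
then glue the chart extensions with a partition of unity. More invariantly, one can take $\tilde u_j\circ\mathrm{pr}+A\,d_{Z_j}^2$ in a tubular neighbourhood. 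At points of $Z_j$, the form $\theta+\ddc w_j$ is block diagonal to leading order. The tangential block is $\theta|_{Z_j}+\ddc\tilde u_j$, the normal block is $A\,\mathrm{Id}$ plus bounded terms, and the off-diagonal terms are bounded independently of $A$.

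\textbf{The algebraic fact needed.} The key step is the following. If the tangential $p_j\times p_j$ block $\lambda'$ lies in $\Ccal_{g_j}$ (in dimension $p_j$), then the full matrix lies in $\Ccal_f$ once the normal block exceeds $A\,\mathrm{Id}$ with $A$ large. The bound on $A$ is uniform on compact subsets of the tangential cone and for bounded off-diagonal entries. When $c_j\ge d$, no tangential condition is needed.

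To prove it, use \eqref{eq:diagonalcomponent} in the normal variables. For a diagonal normal block,
\[
 \partial_I F(\lambda',A,\dots,A)=\frac{d!}{(d-c_j)!}A^{c_j}\,\partial_I\Pol_{p_j}(g_j)(\lambda')+O(A^{c_j-1})
\]
for $I$ inside the tangential indices. The identity $f^{(c_j)}=\frac{d!}{(d-c_j)!}g_j$ together with \eqref{eq:inducedforms} gives this expansion. The derivatives involving normal indices have the same or larger leading behaviour. So all nonzero $\partial_IF$ are positive, and the component criterion from Section~\ref{sec:cones} puts the point in $\Ccal_f$.

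When $c_j\ge d$, every term is dominated by a positive power of $A$, exactly as in the proof of \eqref{eq:finiteLift}. The off-diagonal entries and the non-constancy of the normal block are absorbed as a Schur-complement perturbation: conjugating by a unipotent matrix changes the tangential block by $O(1/A)$. Openness of $\Ccal_{g_j}$ absorbs this, uniformly on compact sets. By openness of $\Ccal_f$, the conclusion then holds on a small neighbourhood of each compact piece of $Z_j\setminus S$.

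\textbf{Patching and expected difficulty.} The main obstacle is uniformity near $S$, where the tangential margin degenerates and the tubular charts break down. Near $S$ I do not use $w_j$ at all: there $v$ is already admissible. I define
\[
 w=M_\tau\bigl(v,\;w_j-\epsilon\bigr)\quad\text{near }Z_j,
\]
with a cutoff arrangement chosen in the style of Theorem~\ref{thm:gluing}. Choose neighbourhoods $S\subset W_1\Subset W_2$. On $W_2\setminus W_1$ we have $\tilde u_j=v$ on $Z_j$, so $w_j-v=O(\mathrm{dist}^2)+A\,|z''|^2$ is comparable to $A|z''|^2$ near $Z_j$. To ensure that $v$ dominates near $S$ and $w_j$ dominates on $Z_j\setminus W_2$, I subtract a small multiple $\epsilon\,\beta$ from $w_j$, where $\beta$ is a smooth function equal to $1$ near $\overline{W_1}$ and $0$ outside $W_2$. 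Because $\Ccal_f$ is open and the compact region $\overline{W_2}\setminus W_1$ is fixed, a small $\epsilon$ keeps $w_j-\epsilon\beta$ admissible there. Shrinking the tubular neighbourhood radius relative to $\epsilon/A$ makes the correct branch dominate by more than $2\tau$ near each boundary, which yields smoothness exactly as in Theorem~\ref{thm:gluing}.

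Different components meet only inside $S$, so their neighbourhoods are disjoint away from a neighbourhood of $S$ where $w=v$. Hence the pieces combine into one smooth $w$ near $Z$ with $\theta+\ddc w\in\Ccal_f(\chi)$ and $w=v$ near $S$.
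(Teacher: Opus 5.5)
Your matrix step and the construction of $w_j$ away from $S$ are essentially the paper's; the paper adds $Mq$ with $q=\sum_\ell\psi_\ell|\zeta_\ell|^2$ where you add $A|z''|^2$. The patching near $S$, however, fails as written.

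\textbf{The gap.} At points of $Z_j$ close to $S$ you have $\tilde u_j=v$, and $A|z''|^2$ vanishes on $Z_j$. So at those points $(w_j-\epsilon\beta)-v=-\epsilon\beta\le0$.
\begin{itemize}
\item The branch $w_j-\epsilon\beta$ therefore never exceeds $v$ by $2\tau$ on $Z_j$. You can never delete the $v$-branch, yet $v$ is only defined near $S$.
\item Where $\beta=0$ the two branches tie. Since $M_\tau(t,t)=t+c\tau$ with $c>0$, switching from $M_\tau(v,w_j)$ to $w_j$ there produces a jump.
\item Shrinking the tube cannot help, because the failure occurs on $Z_j$ itself.
\item Also, $w_j-v$ is only $O(|z''|)$, not $O(|z''|^2)$. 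An extension that is constant in $z''$ does not match the normal derivatives of $v$.
\end{itemize}
A sign-changing cutoff would repair this. Take $\beta=1$ near $\overline{W_1}$ and $\beta=-1$ off $W_2$, still inside the region where $\tilde u_j=v$, with $2\tau<\epsilon$. Choose $\epsilon$ small enough that openness of $\Ccal_f(\chi)$ on the compact transition region keeps $w_j-\epsilon\beta$ admissible.

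\textbf{How the paper avoids the maximum.}
\begin{itemize}
\item It picks $v_0$ smooth near $Z$ with $v_0=v$ near $S$.
\item It extends the differences $\bar u_j-v_0|_{Z_j}$, which have compact support in $Z_j\setminus S$, in normal coordinates. This gives $\widetilde w$ with $\widetilde w|_{Z_j}=\bar u_j$ and $\widetilde w=v$ identically near $S$.
\item It adds $Mq$, with $q$ built from charts $D_\ell\Subset X\setminus S$. Then $q=0$ near $S$, $q=dq=0$ on $Z$, and $\ddc q\ge0$ along $Z$.
\item Near $S$, admissibility follows from $\theta+\ddc v\in\Ccal_f(\chi)$ and the upper-set property. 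So the degeneration you worried about never has to be confronted.
\item On the compact set $Z\setminus U$, the off-diagonal terms are absorbed by
\[
 \begin{pmatrix}A&B\\B^*&D+MH\end{pmatrix}
 \ge\operatorname{diag}\bigl(A-\varepsilon I_p,\,
 (M\nu-\|D\|-\varepsilon^{-1}\|B\|^2)I_c\bigr)
\]
together with the upper-set property.
\end{itemize}
This inequality should replace your ``unipotent conjugation'', since a non-unitary congruence does not preserve membership in $\Ccal_f(\chi)$.

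\textbf{Smaller points in the algebraic step.} For tangential $I$ with $|I|>d-c_j$ one has $\partial_I\Pol_{p_j}(g_j)\equiv0$, so your expansion gives no information on the sign. In that case the leading term comes from the $\sigma_d$ part of $F$ and is a positive multiple of $A^{d-|I|}$. Your constant should be $\frac{p_j!\,d!}{n!\,(d-c_j)!}$, although only its sign matters.
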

\begin{proof}
We first establish the matrix calculation used below.
Write $p+c=n$.
When $c<d$, set
\(
 g=\frac{(d-c)!}{d!}f^{(c)}
\)
and assume $A\in\Ccal_g$.
When $c\ge d$, let $A\in\Herm(p)$ be arbitrary.
Let $\lambda$ be the eigenvalue vector of $A$, and set
$F=\Pol_n(f)$.

For tangent indices $I\subset\{1,\ldots,p\}$ and normal
indices $J\subset\{p+1,\ldots,n\}$, let
$a=|I|$ and $b=|J|$.
If $a+b\le d$, the leading term of
$\partial_I\partial_JF(\lambda,L\mathbf1_c)$ in $L$ is
\[
 \begin{cases}
 \displaystyle
 \frac{p!d!}{n!(d-c)!}\,
 \partial_I\Pol_p(g)(\lambda)L^{c-b},
 & c<d,\quad a\le d-c,\\[6pt]
 \displaystyle
 \frac{\binom{c-b}{d-a-b}}{\binom nd}\,
 L^{d-a-b},
 & \text{otherwise}.
 \end{cases}
\]
Both coefficients are positive under the stated assumptions.
Including $I=J=\varnothing$, we obtain positivity of $F$
and all its nonzero partials for sufficiently large $L$.
Then by \eqref{eq:diagonalcomponent},
\[
 \operatorname{diag}(A,LI_c)\in\Ccal_f
 \qquad (L\ge L_0).
\]
On a compact subset of $\Ccal_g$, the leading coefficients
have a positive lower bound and the remaining coefficients
are bounded, so $L_0$ can be chosen uniformly.
When $c\ge d$, the leading coefficients are positive constants
independent of $A$, so the same conclusion holds.

We next descend and extend the potentials.
Choose an open neighborhood $N_j$ of $\pi_j^{-1}(S)$ on which
$u_j=\pi_j^*v$, and set
\[
 O_j=Z_j\setminus\pi_j(Y_j\setminus N_j).
\]
Since $\pi_j$ is proper, $O_j$ is open in $Z_j$.
The definition of $O_j$ and the inclusion
$\pi_j^{-1}(S)\subset N_j$ give
\(
 S\cap Z_j\subset O_j\) and 
 \(
 \pi_j^{-1}(O_j)\subset N_j.
\)
Thus the function
\[
 \bar u_j=
 \begin{cases}
 u_j\circ\pi_j^{-1},&\text{on }Z_j\setminus S,\\
 v,&\text{on }O_j
 \end{cases}
\]
is well-defined and agrees with $v$ near $S\cap Z_j$.
Choose a smooth function $v_0$ near $Z$ equal to $v$ near $S$.
Each $\bar u_j-v_0|_{Z_j}$ has compact support in
$Z_j\setminus S$.
Extend these differences in normal coordinates, using a
partition of unity with supports away from the other components.
This gives a smooth function $\widetilde w$ near $Z$ satisfying
\[
 \widetilde w|_{Z_j}=\bar u_j,\qquad
 \widetilde w=v\quad\text{near }S.
\]
Set $\xi=\theta+\ddc\widetilde w$.
For $c_j<d$, the hypothesis gives
\[
 \pi_j^*\xi
 =\pi_j^*\theta+\ddc u_j
 \in\Ccal_{g_j}(\pi_j^*\chi)
 \quad\text{on }Y_j\setminus\pi_j^{-1}(S).
\]
Since $\pi_j$ is an isomorphism there, the tangential
restriction of $\xi$ to $Z_j\setminus S$ belongs to
$\Ccal_{g_j}(\chi|_{Z_j\setminus S})$.

Choose an open neighborhood $U$ of $S$ on which
$\widetilde w=v$ and $\xi\in\Ccal_f(\chi)$, and let
$Z_0=Z\setminus U$.
If $Z_0=\varnothing$, take $w=\widetilde w$.
Otherwise, choose finitely many coordinate domains
\[
 V_\ell\Subset D_\ell\Subset X\setminus S,
 \qquad Z_0\subset\bigcup_\ell V_\ell,
\]
such that each $D_\ell$ meets only one component of $Z$ and
has holomorphic coordinates $(z_\ell,\zeta_\ell)$ with
\(
 Z\cap D_\ell=\{\zeta_\ell=0\}.
\)
Choose smooth cutoff functions
\[
 \psi_\ell\in C_c^\infty(D_\ell,\mathbb R),\qquad
 0\le\psi_\ell\le1,\qquad
 \psi_\ell=1\quad\text{near }\overline{V_\ell},
\]
and define
\(
 q=\sum_\ell\psi_\ell|\zeta_\ell|^2,
\)
extending each summand by zero outside $D_\ell$.
Then
\[
 q=dq=0\quad\text{on }Z,\qquad
 q=0\quad\text{near }S.
\]

For $x\in Z\cap D_\ell$,
we have $\zeta_\ell(x)=0$ and $d|\zeta_\ell|^2_\ell(x)=0$.
Thus $\ddc q$ is semipositive along $Z$, with zero tangent
and mixed blocks.
Since the $V_\ell$ cover $Z_0$, its normal block is positive
definite there. Compactness gives a uniform lower bound
$\nu I_c$, with $\nu>0$.

On $Z_0\cap Z_j$, write $p=p_j$ and $c=c_j$.
In a $\chi$-unitary frame adapted to the tangent and normal
spaces, the matrices of $\xi$ and $\ddc q$ are
\[
 \operatorname{Mat}(\xi)=
 \begin{pmatrix}A&B\\B^*&D\end{pmatrix},
 \qquad
 \operatorname{Mat}(\ddc q)=
 \begin{pmatrix}0&0\\0&H\end{pmatrix},
 \qquad H\ge\nu I_c.
\]
The tangential condition gives $A\in\Ccal_{g_j}$ when $c_j<d$.
By compactness, we may choose one $\varepsilon>0$ such that
\[
 A-\varepsilon I_{p_j}\in\Ccal_{g_j}
 \quad\text{on }Z_0\cap Z_j,\qquad c_j<d.
\]
For $c_j\ge d$, no tangential condition is required.
The matrix calculation above, applied to the finitely many
compact families of shifted tangent blocks, gives one $L_0>0$
such that
\begin{equation}\label{eq:diag(A,I)}
 \operatorname{diag}(A-\varepsilon I_p,L_0I_c)
 \in\Ccal_f
 \quad\text{on }Z_0.
\end{equation}

For tangent and normal vectors $s,t$, Cauchy- Schwarz inequality gives
\(
 2|\langle Bt,s\rangle|
 \le\varepsilon|s|^2
       +\varepsilon^{-1}\|B\|^2|t|^2.
\)
Consequently,
\[
 \begin{pmatrix}A&B\\B^*&D+MH\end{pmatrix}
 \ge
 \operatorname{diag}\!\left(
 A-\varepsilon I_p,\,
 (M\nu-\|D\|-\varepsilon^{-1}\|B\|^2)I_c
 \right).
\]
Set
\[
 C_0=\sup_{Z_0}
       \bigl(\|D\|+\varepsilon^{-1}\|B\|^2\bigr),
 \qquad
 M>\frac{L_0+C_0}{\nu}.
\]
Then by~\eqref{eq:diag(A,I)}, 
$\xi+M\ddc q\in\Ccal_f(\chi)$ on $Z_0$.
On $Z\cap U$, the same follows from
$\xi\in\Ccal_f(\chi)$ and $\ddc q\ge0$ along $Z$.
By compactness and openness, this condition holds on a
neighborhood of $Z$.
Therefore
\[
 w=\widetilde w+Mq
\]
satisfies $\theta+\ddc w\in\Ccal_f(\chi)$ there and
$w=v$ near $S$.
\end{proof}

\subsection{Induction on analytic subsets}
We now combine the perturbation lemma with relative gluing and
normal extension. The induction on dimension of the whole manifold gives
a form on each resolution. The following induction considers the proper singular subvariety.
\begin{proposition}[Local extension]\label{lem:local}
Let $(X,\chi)$ be a compact K\"ahler manifold of dimension $n\ge2$.
Assume that Theorem~\ref{thm:positive} holds for full-degree
polynomials in dimensions smaller than $n$.
Let $f$ be a monic right-Noetherian polynomial of degree
$1\le d\le n$ with constant real coefficients, and let
$\theta\in[\alpha]$ be a smooth closed real $(1,1)$-form.
Suppose a K\"ahler form $\kappa$ satisfies
\begin{equation}\label{eq:localmixed}
 \int_V[\kappa]^{p-q}\wedge\Phi_q^f([\alpha],[\chi])>0,
 \qquad n-d\le q\le p<n,
\end{equation}
for every proper irreducible analytic subvariety $V\subset X$
of dimension $p$.
Then every proper analytic subset $Z\subset X$ has a neighborhood
carrying a smooth real function $v$ such that
$\theta+\ddc v\in\Ccal_f(\chi)$.
\end{proposition}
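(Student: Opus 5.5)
The plan is induction on $\dim Z$. Enlarging $Z$ only makes the statement stronger, so we may take $Z$ to be any proper analytic subset. For $\dim Z=0$ the set $Z$ is finite. Near each point we take $v=\theta$-potential $h$ plus $M|z|^2$ in local coordinates. Since $\Ccal_f$ contains $R I$ for large $R$, and $\theta+\ddc h+M\ddc|z|^2$ has eigenvalues tending to $+\infty$ uniformly near the point, this lies in $\Ccal_f(\chi)$ for $M$ large.

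For the inductive step, let $S$ be the union of $\mathrm{Sing}(Z)$, the pairwise intersections of the components $Z_j$, and the lower-dimensional components. Then $\dim S<\dim Z$, and the induction hypothesis gives a smooth $v_S$ near $S$ with $\theta+\ddc v_S\in\Ccal_f(\chi)$. It then suffices to verify the hypotheses of Lemma~\ref{lem:normal} for the top-dimensional components. For $Z_j$ with $c_j=n-p_j\ge d$ no tangential condition is needed: we take any smooth $u_j$ on a resolution $\pi_j:Y_j\to Z_j$ equal to $\pi_j^*v_S$ near $\pi_j^{-1}(S)$, using a cutoff. So assume $c_j<d$ and put $g_j=\frac{(d-c_j)!}{d!}f^{(c_j)}$, which is right-Noetherian of degree $e=d-c_j$ on $Y_j$, where $m=p_j$ and $k=m-e=n-d$.

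On $Y_j$ we need a smooth $u_j$ with $\pi_j^*\theta+\ddc u_j\in\Ccal_{g_j}(\pi_j^*\chi)$ off $\pi_j^{-1}(S)$ and $u_j=\pi_j^*v_S$ near $\pi_j^{-1}(S)$. The metric $\gamma=\pi_j^*\chi$ is only semipositive, so we proceed in four steps.
\begin{enumerate}
\item Pulled back intersection numbers. By \eqref{eq:inducedforms}, for subvarieties $W'\subset Y_j$ not contained in the exceptional locus, $\int_{W'}\rho^{p-q}\Phi^{g_j}_q(\pi_j^*\alpha,\gamma)$ is computed by projection on $\pi_j(W')$ with $\pi_j^*\kappa$. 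To obtain a genuine Kähler $\rho$ we take $\rho=\pi_j^*\kappa-\epsilon\beta$, where $-\beta$ is relatively ample on the exceptional divisor. The strict hypothesis \eqref{eq:localmixed} then survives small $\epsilon$ for the whole-manifold terms $J_j$. For proper subvarieties we need only the nonstrict inequality \eqref{eq:liftproper}. Subvarieties inside the exceptional locus are where $\gamma$ degenerates; there $\Phi_q$ with $q\ge k$ carries a factor $\gamma^{k}$, which we expect to kill or control these terms.
\item Perturbation. Apply Lemma~\ref{lem:quantlift} to obtain the polynomial $p_s=(t+L_s)^kg_j$ of full degree $m$, the forms $\xi_s,\gamma_s$ with $\gamma_s$ Kähler, and the uniform margins \eqref{eq:liftmargin}.
\item Dimension induction. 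Since $m<n$, the assumed Theorem~\ref{thm:positive} in dimension $m$, applied to $p_s$ after the normalizing translation $c_s$, produces a smooth form in $\Ccal_{p_s}(\gamma_s)$ cohomologous to $\xi_s$. By \eqref{eq:finiteLift} this form lies in $\Ccal_{g_j}(\gamma_s)$.
\item Passing to the limit. As $s\to0$, the metric comparison from the proof of Theorem~\ref{thm:gluing} (with $\gamma\le\gamma_s$ off the exceptional set) and the vanishing $|c_s|s^N\to0$ should keep us inside $\Ccal_{g_j}(\gamma)$ off $\pi_j^{-1}(S)$, while the extra $s\rho$ is absorbed by the upper-set property.
\end{enumerate}

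To match $\pi_j^*v_S$ near $\pi_j^{-1}(S)$ we use the relative version of Theorem~\ref{thm:gluing}, with $E$ the exceptional set and $\varrho=\gamma$ rescaled. This requires a current with a strictly positive lower bound whose Lelong level set contains $\pi_j^{-1}(S)$. We would obtain it by adding to the smooth solution the pullback of a quasi-psh function with analytic singularities along $S$, compensated by a small multiple of $\rho$. Lemma~\ref{lem:normal} then yields $w$ near $Z$ with $w=v_S$ near $S$, which completes the induction.

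The main obstacle is the degenerate metric $\pi_j^*\chi$ on the exceptional locus. We must check that the hypotheses of Lemma~\ref{lem:quantlift} hold there, and that the cone membership obtained for $\gamma_s$ transfers back to $\gamma$ off $E$ uniformly enough to glue. If the direct transfer fails, the first fallback is to apply the gluing directly in $\Ccal_{p_s}$ for one fixed small $s$ and then use the inclusion $\Ccal_{p_s}\subset\Ccal_{g_j}$.
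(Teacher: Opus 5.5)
\textbf{Gap 1: no uniform margin (Step 4 and the fallback).} Your architecture matches the paper: induction on $\dim Z$, the set $S$, resolutions, Lemma~\ref{lem:quantlift}, Theorem~\ref{thm:positive} in dimension $m$, relative gluing, then Lemma~\ref{lem:normal}. But your Step 4 fails, and the fallback does not repair it.

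Theorem~\ref{thm:positive} gives only a smooth $\omega_s\in[\xi+s\rho]\cap\Ccal_{p_s}(\gamma_s)$, with no control on its depth inside the cone.
\begin{itemize}
\item To return to $[\xi]$ you must \emph{subtract} $s\rho$.
\item To pass from $\gamma_s=\gamma+s^N\rho$ to $\gamma$ you must apply metric comparison to $\omega_s-c_s\gamma_s$. This is legitimate only in full degree, where the translated cone is semipositive, and it costs $c_ss^N\rho$ with $c_s\to-\infty$.
\end{itemize}
The upper-set property only lets you add semipositive forms. Both corrections therefore need a margin $\eta\rho$ with $\eta$ independent of $s$. Letting $s\to0$ does not help: smooth forms have no compactness, and $\Ccal_{g_j}-c_{g_j}I\subset\Gamma_e$ is not semipositive for $e<m$. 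Your step ``compensated by a small multiple of $\rho$'' silently needs the same margin.

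The paper obtains the margin from mass concentration, which your proposal never uses. Set $\delta=J_0/(4J_1)$. By \eqref{eq:liftlimits}, $r_s=V_s/D_s\to4\delta$. Fix one small $s$ with $r_s>2\delta$ and $s+|c_s|s^N<\delta/4$, and put $h=\gamma_s$. Proposition~\ref{prop:fullconcentration} then gives a current $Q_s\in[\xi_s]-r_s[\rho]$ with $Q_s\ge c_sh$ and $(Q_s)_{\ac}\in\overline{\Ccal_{p_s}(h)}$. The current $Q=Q_s+(r_s-s)\rho-c_s\gamma\in[\xi-c_s\gamma]$ satisfies $Q-\tfrac\delta4h\ge0$, with absolutely continuous part in $K(h)$, where $K=\overline{\Ccal_{g(y+c_s)}}$. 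Only such a current can absorb $\tau\ddc\ell_D$ and serve as input to the relative form of Theorem~\ref{thm:gluing}.

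Because the Lelong level sets $F_j$ of this current are not controlled, gluing to $v_S$ alone is not enough. The paper enlarges $S$ to $S'=S\cup\bigcup_j\pi_j(F_j)$, which still has dimension $<m$, and re-applies the induction hypothesis there.

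\textbf{Gap 2: Step 1 does not establish \eqref{eq:liftproper}.} Lemma~\ref{lem:quantlift} needs \eqref{eq:liftproper} for every proper $V\subset W$ with one fixed K\"ahler $\rho$.
\begin{itemize}
\item Your $\rho=\pi^*\kappa-\epsilon\beta$ is not a pullback, so the projection formula no longer reduces $\int_V\rho^{p-q}\Phi^g_q(\xi,\gamma)$ to \eqref{eq:localmixed} on $\pi(V)$.
\item The strict inequalities in \eqref{eq:localmixed} have no uniform lower bound that could absorb the $\epsilon\beta$-terms over infinitely many $V$.
\item For $V\subset E$, the factor $\gamma^k$ vanishes on $V$ only when $\dim\pi(V)<k$, so it does not dispose of these terms.
\end{itemize}
The missing idea is to use the induction hypothesis pointwise. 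Since $\pi(V)$ is analytic of dimension $\le p<m$, there is $v$ near $\pi(V)$ with $\theta_v=\theta+\ddc v\in\Ccal_f(\chi)$. Stokes' theorem then gives $\int_V\rho^{p-q}\Phi_q^g(\xi,\gamma)=\int_V\rho^{p-q}\pi^*\Phi_q^f(\theta_v,\chi)\ge0$ by strong positivity. This holds for any K\"ahler $\rho$ and every $V$, including $V\subset E$.
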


\begin{proof}
We induct on the largest dimension $m$ of a component of $Z$.
For $m=0$, $v$ can be chosen as sufficiently large positive quadratic functions
in disjoint coordinate balls.
Assume $m>0$ and set $k=n-d$.
Let $S$ be the union of the lower-dimensional components of $Z$,
the singular loci of the $m$-dimensional components,
and their pairwise intersections. Then $\dim S<m$.

For each $m$-dimensional component $Z_j$, embedded resolution
and principalization \cite{BM} give a compact K\"ahler
resolution $\pi:W\to Z_j$ and an effective divisor $D$ such that
\[
 E:=\operatorname{Supp}D=\pi^{-1}(S\cap Z_j),
 \qquad
 \pi:W\setminus E\xrightarrow{\sim}Z_j\setminus S.
\]
Here $W$ is K\"ahler because it is a smooth submanifold of
an iterated blowup of $X$ along smooth centers.

If $m\le k$, apply the induction hypothesis to $S$.
Extend the pullback of the resulting local potential smoothly
to each resolution, keeping it unchanged near $E$.
Since $n-m\ge d$, Lemma~\ref{lem:normal} requires no tangential
condition and gives the conclusion.

Suppose $m>k$. Set
\[
 e=m-k,\qquad
 g=\frac{e!}{d!}f^{(n-m)},\qquad
 \xi=\pi^*\theta,\qquad \gamma=\pi^*\chi.
\]
Then $g$ is monic and right-Noetherian, and
\[
 \gamma\ge0,\qquad \gamma>0\ \text{on }W\setminus E,
 \qquad -C_\xi\gamma\le\xi\le C_\xi\gamma.
\]
After scaling $\kappa$, assume $\kappa\ge\chi$.
Choose a K\"ahler form $\sigma$ on $W$ and let $\rho=\pi^*\kappa+\zeta\sigma$, where $\zeta>0$ will be fixed
sufficiently small. In particular, $\rho\ge\gamma$.

The restriction identity gives
\begin{equation}\label{eq:localderivativeidentity}
 \Phi_q^g(\xi,\gamma)=\pi^*\Phi_q^f(\theta,\chi),
 \qquad k\le q\le m.
\end{equation}
At $\zeta=0$, the projection formula and \eqref{eq:localmixed}
give
\[
 \int_W(\pi^*\kappa)^{m-q}\wedge\Phi_q^g(\xi,\gamma)
 =\int_{Z_j}\kappa^{m-q}\wedge\Phi_q^f(\theta,\chi)>0.
\]
There are only finitely many indices $k\le q\le m$.
Thus one sufficiently small $\zeta>0$ makes all these
integrals positive with $\pi^*\kappa$ replaced by $\rho$.
This verifies \eqref{eq:liftwhole} in Lemma~\ref{lem:quantlift}.

Let $V\subset W$ be a proper irreducible analytic subvariety
of dimension $p<m$. By Remmert's theorem, $\pi(V)$ is analytic
and has dimension  $\leq p$.
The induction hypothesis gives
$\theta_v=\theta+\ddc v\in\Ccal_f(\chi)$ near $\pi(V)$.
Then Stokes' theorem and \eqref{eq:localderivativeidentity} give
\[
 \int_V\rho^{p-q}\wedge\Phi_q^g(\xi,\gamma)
 =\int_V\rho^{p-q}\wedge\pi^*\Phi_q^f(\theta_v,\chi)
 \ge0,\qquad k\le q\le p.
\]
The last inequality follows from strong positivity and remains
valid when $V\subset E$. Thus \eqref{eq:liftproper} in Lemma~\ref{lem:quantlift} also holds.

Suppose first that $m\ge2$.
Lemma~\ref{lem:quantlift} gives full-degree polynomials $p_s$,
forms $\xi_s=\xi+s\rho$ and $\gamma_s=\gamma+s^N\rho$,
strict mixed inequalities, and positive top integrals.
Since $m<n$, by the assumption of Theorem~\ref{thm:positive} in dimension $m$, we can obtain a smooth form in $[\xi_s]\cap\Ccal_{p_s}(\gamma_s)$.

Let $V_s>0$ and $D_s>0$ be defined as in Lemma~\ref{lem:quantlift}. Set $\delta=J_0/(4J_1)$ and fix $s>0$ sufficiently small that
\[
 r_s:=\frac{V_s}{D_s}>2\delta,\qquad
 s+|c_s|s^N<\frac{\delta}{4},\qquad s^N\le1.
\]
Write $c=c_s$, $t=s^N$, and $h=\gamma+t\rho$.
Then $\gamma\le h\le2\rho$.
All subsequent constructions use this fixed $s$.

Proposition~\ref{prop:fullconcentration} gives a closed current
$Q_s$ satisfying
\[
 [Q_s]=[\xi_s]-r_s[\rho],\qquad
 Q_s\ge ch,\qquad
 (Q_s)_{\ac}\in\overline{\Ccal_{p_s}(h)}.
\]
Define currents
\[
 T=Q_s+r_s\rho,\qquad
 R=T-\delta\rho-ch
   =(Q_s-ch)+(r_s-\delta)\rho\ge0.
\]
Then Lemma~\ref{lem:geometry} gives
\[
 R_{\ac}\in
 \overline{\Ccal_{p_s(y+c)}(h)}
 \subset K(h),
 \qquad K:=\overline{\Ccal_{g(y+c)}}.
\]
Now set
\begin{equation}\label{eq:localidentity}
 Q:=T-s\rho-c\gamma
   =R+(\delta-s+ct)\rho,
 \qquad [Q]=[\xi-c\gamma].
\end{equation}
Since $\delta-s+ct\ge3\delta/4$, taking $\eta_0=\delta/4$ gives
\[
 \begin{aligned}
 Q-\eta_0h
 &=R+(\delta-s+ct)\rho-\eta_0h\ge R+\frac{\delta}{4}\rho\ge0,
 \end{aligned}
 \qquad
 (Q-\eta_0h)_{\ac}\in K(h).
\]

If $m=1$, then $k=0$. Write $g(y)=y-c$ and set $K=[0,\infty)$.
Since $\int_W(\xi-c\gamma)>0$, the Poisson equation gives a
smooth positive form $Q\in[\xi-c\gamma]$.
Take $h=\rho\ge\gamma$ and choose $\eta_0>0$ with
$Q-\eta_0h>0$.

We next arrange positive Lelong numbers along $E$.
If $E\ne\varnothing$, choose a logarithmic potential $\ell_D$
such that
\[
 \ddc\ell_D=[D]-\Theta_D,
\]
where $\Theta_D$ is smooth.
Choose $\tau>0$ sufficiently small that
$\eta_0h/2-\tau\Theta_D\ge0$, and put
$Q'=Q+\tau\ddc\ell_D$. Then
\[
 Q'-\frac{\eta_0}{2}h
 =(Q-\eta_0h)
   +\left(\frac{\eta_0}{2}h-\tau\Theta_D\right)
   +\tau[D]\ge0,
\]
and
\[
 \left(Q'-\frac{\eta_0}{2}h\right)_{\ac}\in K(h),
 \qquad
 \nu(Q',x)\ge\tau\nu([D],x)\quad(x\in E).
\]
The last quantity has a positive lower bound on $E$.
If $E=\varnothing$, simply take $Q'=Q$.
In both cases, $[Q']=[\xi-c\gamma]$, so the $\ddc$-lemma gives
\[
 Q'=\xi-c\gamma+\ddc\varphi
\]
for a quasi-plurisubharmonic function $\varphi$ on $W$.

Perform this construction on every resolution.
For each resulting current $Q'_j$, choose a positive Lelong
threshold $a_j$ allowed by Theorem~\ref{thm:gluing}, small enough
that
\[
 E_j\subset
 F_j:=\{x\in W_j:\nu(Q'_j,x)\ge a_j\}.
\]
By Siu's theorem, each $F_j$ is a proper analytic subset.
Remmert's theorem shows that
\[
 S':=S\cup\bigcup_j\pi_j(F_j)
\]
is analytic and has dimension less than $m$.
The induction hypothesis supplies one smooth potential $v$
near $S'$ with $\theta+\ddc v\in\Ccal_f(\chi)$.

On each resolution, restriction to the tangent spaces gives
\[
 \xi+\ddc\pi^*v\in\Ccal_g(\gamma),
 \qquad
 \xi-c\gamma+\ddc\pi^*v\in\Int K(\gamma)
 \quad\text{near }F_j\text{ outside }E_j.
\]
Since $0\le\gamma\le h$ and $\gamma>0$ outside $E_j$,
the relative assertion of Theorem~\ref{thm:gluing} gives
$u_j\in C^\infty(W_j,\mathbb R)$ such that
\[
 u_j=\pi_j^*v\quad\text{near }F_j,\qquad
 \pi_j^*\theta+\ddc u_j
 \in\Ccal_g(\pi_j^*\chi)
 \quad\text{on }W_j\setminus E_j.
\]

Finally, $\pi_j$ is an isomorphism off $E_j\subset F_j$,
and distinct components meet only in $S$. Hence
\[
 \pi_j^{-1}(S'\cap Z_j)
 =\pi_j^{-1}\bigl((S\cap Z_j)\cup\pi_j(F_j)\bigr)
 \subset F_j.
\]
Thus $u_j=\pi_j^*v$ near the full inverse image of $S'\cap Z_j$.
On the lower-dimensional components, which lie in $S'$,
use the pullback of $v$.
Lemma~\ref{lem:normal}, applied with $S'$ in place of $S$,
now gives the required potential near $Z$.
\end{proof}

We clarify the order of the two inductions.
In the following, we fix $n$ and assume Theorem~\ref{thm:positive} for full-degree
polynomials in every dimension smaller than $n$.
Under this assumption, Proposition~\ref{lem:local} is proved
by induction on the largest dimension $m$ of an analytic
subset of the fixed manifold $X$.

Once Proposition~\ref{lem:local} has been established for
all $m<n$, it supplies the local hypothesis of
Proposition~\ref{lem:global}.
In the proof of Theorem~\ref{thm:positive}, the mixed
inequalities give positivity of the top integral along
$[\alpha]+t[\kappa]$, so Proposition~\ref{lem:global}
yields the global representative in dimension $n$.

\section{Global existence and strict numerical positivity}\label{sec:global}
{
We use a continuity argument along a ray of classes.
At its limiting point, mass concentration gives a current.
Local extension and gluing then give a smooth admissible representative.

\begin{proposition}\label{lem:global}
Let $g$ be a constant right-Noetherian polynomial of
degree $2\le d\le n$, and let $\theta\in[\alpha]$ be smooth and
closed. Suppose that, near any proper analytic set,
there is a smooth function $v$ with $\theta+\ddc v\in\Ccal_g(\chi)$.
If a K\"ahler form $\lambda$ satisfies
\[
 \int_X\Phi_n^g([\alpha]+t[\lambda],[\chi])>0\qquad(t\ge0),
\]
then $[\alpha]$ contains a smooth closed form in $\Ccal_g(\chi)$.
\end{proposition}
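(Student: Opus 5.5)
We run a continuity argument along the ray $[\alpha]+t[\lambda]$, $t\ge0$. Set
\[
 I=\{t\ge0:\ [\alpha]+t[\lambda]\text{ contains a smooth closed form in }\Ccal_g(\chi)\}.
\]
Since $\Ccal_g(\chi)+\Gamma_n(\chi)\subset\Ccal_g(\chi)$ by Lemma~\ref{lem:geometry}, $I$ is an up-set: if $\omega\in[\alpha]+t[\lambda]$ is admissible, then $\omega+s\lambda$ is admissible for $s\ge0$. For $t$ large, $\theta+t\lambda-c_g\chi$ is positive. Its eigenvalues relative to $\chi$ then lie in a neighbourhood of $R\mathbf1$ with $R$ large, so $\theta+t\lambda\in\Ccal_g(\chi)$ and $I\ne\varnothing$. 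Since $\Ccal_g$ is open, $I$ is open in $[0,\infty)$: if $\omega\in\Ccal_g(\chi)$ represents $[\alpha]+t_0[\lambda]$, then $\omega-\epsilon\lambda\in\Ccal_g(\chi)$ for small $\epsilon$ by compactness. Hence $I=(t_0,\infty)$ or $I=[0,\infty)$. We argue by contradiction, assuming $I=(t_0,\infty)$ with $t_0\ge0$. Replacing $\theta$ by $\theta+t_0\lambda$ and $[\alpha]$ by $[\alpha]+t_0[\lambda]$, we may take $t_0=0$. The hypothesis near proper analytic sets is preserved, because adding $t_0\lambda\ge0$ keeps the potential $v$ in $\Ccal_g(\chi)$.

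\textbf{Current at the endpoint.} Apply Proposition~\ref{prop:endpoint} with $\alpha_j=\alpha+j^{-1}\lambda$ and $\kappa=\lambda$. Its hypothesis $V_g([\alpha])>0$ is exactly the assumed positivity at $t=0$. This yields $\delta>0$, a finite $L$, and $T=\theta+\ddc u$ satisfying
\[
 T-\delta\lambda\ge-C\chi,\qquad
 (T-\delta\lambda)_{\ac}\in\overline{\Ccal_{p_L}(\chi)}\subset\overline{\Ccal_g(\chi)}.
\]
To feed this into Theorem~\ref{thm:gluing}, pass to the translated cone. Let $K:=\overline{\Ccal_g}-c_gI$, which is contained in the semipositive matrices when $d=n$. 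For $d<n$ we instead use $K=\overline{\Ccal_{p_L}}-c_LI$, with $c_L$ the degree-$(n-1)$ translation constant of $p_L$; this is semipositive because $p_L$ has full degree, and this is why Proposition~\ref{prop:endpoint} keeps $L$ finite. Scaling so that $\lambda\ge\chi$, the current $T':=T-c_L\chi$ satisfies $T'-\tfrac{\delta}{2}\chi\ge T'-\tfrac{\delta}{2}\lambda\ge0$ and $(T'-\tfrac\delta2\chi)_{\ac}\in K(\chi)$. The a.e.\ membership survives adding the positive form $\tfrac\delta2\lambda$ plus $\tfrac\delta2(\lambda-\chi)\ge0$, since $K+\Pcal\subset K$. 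So \eqref{eq:gluinginput} holds with $\theta'=\theta-c_L\chi$, $\eta=\delta/2$, and $\Int K\ne\varnothing$.

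\textbf{Gluing.} Let $c_*$ be as in Theorem~\ref{thm:gluing} and $E_c=\{\nu(T',\cdot)\ge c\}$ for some $0<c\le c_*$. By Siu's theorem $E_c$ is a proper analytic subset. By the hypothesis of the proposition there is a smooth $v$ near $E_c$ with $\theta+\ddc v\in\Ccal_g(\chi)$. We need $\theta'+\ddc v\in\Int K(\chi)$, which amounts to $\theta+\ddc v\in\Ccal_{p_L}(\chi)$. This follows from \eqref{eq:finiteLift}, provided $L$ is large relative to the compact set of values of $\theta+\ddc v$ on a slightly smaller neighbourhood of $E_c$.

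This compatibility is the main obstacle. The constant $L$ is produced inside Proposition~\ref{prop:endpoint}, while $E_c$ depends on $T$ and hence on $L$. My first attempt is to fix $v$ only after $T$ is built, and then enlarge $L$. Enlarging $L$ is allowed, since the proof of Proposition~\ref{prop:endpoint} works for every sufficiently large $L$ with $\delta$ independent of $L$. The difficulty is that the Lelong set changes with $L$, so this is circular. To break the circle, I would note that the Lelong numbers of $Q_L$ are bounded in $L$ and that $v$ is given near every proper analytic set. Alternatively, one can choose $v$ near a fixed larger analytic set containing all the relevant $E_c$, or simply work with $\Ccal_g$ directly when $d=n$. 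Theorem~\ref{thm:gluing} then yields a smooth $w$ with $\theta'+\ddc w\in\Int K(\chi)$, that is, $\theta+\ddc w\in\Ccal_{p_L}(\chi)\subset\Ccal_g(\chi)$. So $0\in I$, contradicting $I=(0,\infty)$. Therefore $I=[0,\infty)$ and $[\alpha]$ contains a smooth closed form in $\Ccal_g(\chi)$.
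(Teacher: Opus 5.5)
Your continuity setup and the use of Proposition~\ref{prop:endpoint} at the endpoint are fine, but there is a genuine gap at the gluing step for $2\le d<n$, and it is the one you flag yourself. Taking $K=\overline{\Ccal_{p_L}}-c_LI$ forces the boundary datum near $E_c$ to satisfy $\theta+\ddc v\in\Ccal_{p_L}(\chi)$, while the hypothesis only supplies $\theta+\ddc v\in\Ccal_g(\chi)$. The current $Q_L$ comes from a non-constructive duality and compactness argument, so $E_c=E_c(L)$, and with it the $C^2$ size of $v$ near $E_c$, has no control in $L$. Hence ``enlarge $L$ after choosing $v$'' is circular. A fixed proper analytic set containing every $E_c(L)$ is not available, and bounded Lelong numbers do not resolve this. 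As written, your argument covers only $d=n$, where $p_L=g$.

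The missing observation is that Theorem~\ref{thm:gluing} does not require $K$ to consist of semipositive matrices. Positivity of the current, $T-\eta\chi\ge0$, is a separate hypothesis. The set $K$ only has to be closed, convex and unitarily invariant, with nonempty interior and $K+\Pcal\subset K$. So the finite $L$ is needed only to produce the lower bound $T-\delta\kappa\ge-C\chi$, not to define the cone.

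The paper takes $\kappa=\chi$, $S=T+C\chi$ and $K=\overline{\Ccal_g}+CI$. Then $S-\delta\chi\ge0$ and $(S-\delta\chi)_{\ac}\in K(\chi)$. By Lemma~\ref{lem:geometry}, $\Int K=\Ccal_g+CI$. The Lelong set $E_c$ is now fixed once $T$ is fixed, and $v$ is chosen afterwards. The hypothesis $\theta+\ddc v\in\Ccal_g(\chi)$, together with $t_*\lambda\ge0$, gives exactly $\theta_*+C\chi+\ddc v\in\Int K(\chi)$ near $E_c$. Gluing then yields $\theta_*+\ddc w\in\Ccal_g(\chi)$ with no compatibility issue between $L$ and $v$.
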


{\color{black}
\begin{proof}
Let
\[
 I=\{t\ge0:([\alpha]+t[\lambda])\cap\Ccal_g(\chi)
                       \ne\varnothing\}.
\]
The cone is open and is preserved by addition of positive forms.
{For large $t$, the form $\theta+t\lambda$ belongs to
$\Ccal_g(\chi)$ by \eqref{eq:diagonalcomponent}.}
Thus $I$ is a nonempty open upper interval in $[0,\infty)$.
Set $t_* = \inf I$ and choose $t_j\in I$ with $t_j\downarrow t_*$.
The top integral at $[\eta]=[\alpha]+t_*[\lambda]$ is positive.
Proposition~\ref{prop:endpoint} gives a current $T\in[\eta]$
and constants $\delta>0$ and $C\ge0$ such that
\[
 T-\delta\chi\ge-C\chi,
 \qquad (T-\delta\chi)_{\ac}\in\overline{\Ccal_g(\chi)}.
\]
By the $\partial\bar\partial$ lemma, we can write
$T=\theta_*+\ddc u$, where $u$ is quasi-plurisubharmonic.
Set
\[
 S=T+C\chi,
 \qquad K(\chi)=\overline{\Ccal_g}(\chi)+C\chi.
\]
By Lemma~\ref{lem:geometry}, $K(\chi)$ is closed, convex and
unitarily invariant, has nonempty interior, and is stable
under addition of semipositive matrices. We also have
\[
 S-\delta\chi\ge0,
 \qquad
 (S-\delta\chi)_{\ac}\in K(\chi)
 \quad\text{a.e.}
\]

Choose $0<c\le c_*$ as in Theorem~\ref{thm:gluing}, and put
\[
 E_c=\{x\in X:\nu(S,x)\ge c\}.
\]
This is a proper analytic subset of $X$.
By the  hypothesis near any proper analytic set, there is a smooth function $v$
near $E_c$ such that
$\theta+\ddc v\in\Ccal_g(\chi)$.
Since $t_*\lambda\ge0$, we obtain
\[
 \theta_*+C\chi+\ddc v
 =\theta+\ddc v+t_*\lambda+C\chi
 \in\operatorname{Int}K(\chi)
\]
near $E_c$.

Theorem~\ref{thm:gluing}, applied to
$S=\theta_*+C\chi+\ddc u$, now gives a global smooth function
$w$ satisfying
\[
 \theta_*+C\chi+\ddc w\in\operatorname{Int}K(\chi).
\]
Since $\operatorname{Int}K=\Ccal_g+CI$, it follows that
\[
 \theta_*+\ddc w\in\Ccal_g(\chi).
\]
Thus $t_*\in I$. If $t_*>0$, openness of $I$ gives
$t_*-\varepsilon\in I$ for some $0<\varepsilon<t_*$,
contrary to the definition of $t_*$.
Therefore $t_*=0$, and $\theta+\ddc w$ is the required
smooth closed form in $\alpha$.
\end{proof}

Now we prove the following numerical criterion on compact K\"ahler manifolds.
\begin{theorem}\label{thm:positive}
Let $g$ be a constant monic right-Noetherian polynomial of
degree $1\le d\le n$. Suppose
\begin{equation}\label{eq:positive-top}
 \int_X\Phi_n^g([\alpha],[\chi])>0
\end{equation}
and suppose one K\"ahler form $\kappa$ satisfies
\eqref{eq:mixed} with $g$ in place of $f$. Then $\alpha$ contains
a smooth closed form in $\Ccal_g(\chi)$.
\end{theorem}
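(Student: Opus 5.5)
The plan is a double induction: an outer induction on the dimension $n$, and, inside a fixed dimension, the argument already set up by Proposition~\ref{lem:local} and Proposition~\ref{lem:global}.

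\emph{Degree one.} When $d=1$ we have $g(t)=t-c_0$ and $\Ccal_g=\{\lambda:\sum\lambda_i>nc_0\}$. Condition \eqref{eq:positive-top} reads $\int_X(\alpha-c_0\chi)\chi^{n-1}>0$. Solving a Poisson equation for $\tr_\chi(\theta+\ddc u)$ with a constant right-hand side then gives the required representative directly. No mixed inequalities are needed in this case.

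\emph{Base of the dimension induction.} Now assume $d\ge2$. If $n=1$ then $d=1$, which is covered above, so the base case is automatic. For $n\ge2$ we assume the theorem for full-degree polynomials in all dimensions $<n$, which is exactly the hypothesis of Proposition~\ref{lem:local}.

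\emph{Local hypothesis of Proposition~\ref{lem:global}.} The mixed inequalities \eqref{eq:mixed} restricted to proper subvarieties $V$ (so $p<n$, with $n-d\le q\le p$) are precisely \eqref{eq:localmixed}. Proposition~\ref{lem:local} therefore gives, near every proper analytic subset, a smooth $v$ with $\theta+\ddc v\in\Ccal_g(\chi)$.

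\emph{Top-integral hypothesis along the ray.} We apply Proposition~\ref{lem:global} with $\lambda=\kappa$, which requires
\[
\int_X\Phi_n^g([\alpha]+t[\kappa],[\chi])>0\quad\text{for all } t\ge0.
\]
By the Taylor formula \eqref{eq:formtaylor},
\[
\int_X\Phi_n^g([\alpha]+t[\kappa],[\chi])=\int_X\Phi_n^g([\alpha],[\chi])+\sum_{j=1}^d\binom dj t^j\int_X\kappa^j\Phi_{n-j}^g([\alpha],[\chi]).
\]
The $j=0$ term is positive by \eqref{eq:positive-top}. For $1\le j\le d$, each term is the mixed integral on $V=X$ with $p=n$ and $q=n-j\in[n-d,n-1]$. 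This is positive by \eqref{eq:mixed} with $V=X$; the case $q=n-d$, $j=d$ gives $\int\kappa^d\chi^{n-d}>0$ trivially. Hence the top integral is positive for every $t\ge0$, and Proposition~\ref{lem:global} yields a smooth closed form in $[\alpha]\cap\Ccal_g(\chi)$.

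\emph{Order of the inductions.} Proposition~\ref{lem:local} calls on the full-degree theorem only in dimension $m<n$, and only for the full-degree polynomials $p_s$ on resolutions; these satisfy the strict mixed inequalities and positive top integral by Lemma~\ref{lem:quantlift}. So the induction is legitimate: the dimension-$n$ statement, for all degrees, follows from lower-dimensional full-degree statements.

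\emph{Main obstacle.} The step most likely to need care is checking that Proposition~\ref{lem:global} really applies at every point of the continuity ray. In particular, the endpoint current from Proposition~\ref{prop:endpoint} must be available, and this needs positivity of $V_g$ at $t_*$, which the expansion above guarantees. The remaining step is verifying that the local potentials near $E_c$ are those for the original $\theta$, which is compatible since $t_*\kappa\ge0$. Everything else is a direct assembly of the earlier results.
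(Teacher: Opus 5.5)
Your proposal is correct and follows the paper's proof essentially step for step. You use induction on $\dim X$, Proposition~\ref{lem:local} to obtain local potentials near proper analytic sets from the mixed inequalities on proper subvarieties, the Taylor expansion of $\int_X\Phi_n^g([\alpha]+t[\kappa],[\chi])$ (whose coefficients are the mixed numbers on $X$), and then Proposition~\ref{lem:global} with $\lambda=\kappa$. The only addition is your explicit Poisson-equation treatment of $d=1$ via $\tr_\chi(\theta+\ddc u)$, a case the paper passes over when it fixes $d\ge2$.
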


\begin{proof}
We argue by induction on the dimension of $X$.
 The case
$n=1$ is clear. We fix $n\ge2$ and $d\ge2$ and assume the theorem in
smaller dimensions. In particular, we may apply
Proposition~\ref{lem:local}. Then the strict mixed inequalities on
proper subvarieties give a local potential $v$ with
$\theta+\ddc v\in\Ccal_g(\chi)$ near any proper compact
analytic subset of $X$.

For the classes $[\alpha+t\kappa]$, $t\ge0$, Taylor expansion gives
\begin{equation}\label{eq:top-ray}
 \begin{split}
 \int_X\Phi_n^g([\alpha+t\kappa],[\chi])
 &=\int_X\Phi_n^g([\alpha],[\chi])+\sum_{j=1}^d\binom dj t^j
       \int_X[\kappa]^j\Phi_{n-j}^g([\alpha],[\chi]).
 \end{split}
\end{equation}
For the $j$th term, set $p=n$ and $q=n-j$. Since
$n-d\le q\le n-1$, its coefficient is one of the mixed numbers
assumed positive on $X$. The constant term is positive by
\eqref{eq:positive-top}. Thus the entire expression is positive
for any $t\ge0$. Proposition~\ref{lem:global} then gives a global form in
$\alpha\cap\Ccal_g(\chi)$, completing the induction.
\end{proof}

The mixed inequalities also hold for $\alpha+t[\kappa]$ whenever
$t\ge0$. By \eqref{eq:derivativetaylor},
\begin{equation}\label{eq:mixed-ray}
 \Phi_q^g([\alpha+t\kappa],[\chi])
 =\sum_{j=0}^{d-n+q}\binom{d-n+q}{j}t^j[\kappa]^j
 \Phi_{q-j}^g([\alpha],[\chi]).
\end{equation}
After multiplication by $[\kappa]^{p-q}$, the $j$th coefficient
is the original test with index $q-j$. This index remains between
$n-d$ and $\min\{p,n-1\}$. Thus any coefficient used in the
expansion is positive by \eqref{eq:mixed}. 

\subsection{From strict inequalities to uniform stability}

We will use the inclusion $\Ccal_g\subset\mathcal D_g$ and
the positivity of $\Phi_q^g$ on $\Ccal_g$ from
Lemma~\ref{lem:geometry}.

\begin{proof}[Proof of Theorem~\ref{thm:main}]
Assume first that\/ {\rm (i)} holds. 
Suppose $d\ge2$, and let $r_1=r(f')$.
Strict right-Noetherianity gives
\[
 \Delta_f:=-f(r_1)>0.
\]
Fix any $0<\varepsilon<\Delta_f$ and put
\[
 g=f+\varepsilon.
\]
All positive-order derivatives of $g$ coincide with
those of $f$, and
\[
 g(r_1)=f(r_1)+\varepsilon<0.
\]
Since $g$ is strictly increasing on $(r_1,\infty)$,
its largest real root is strictly greater than $r_1$.
The remaining derivative-root inequalities are unchanged.
Thus $g$ is strictly right-Noetherian.

The top integral identity in the hypothesis gives
\[
 \int_X\Phi_n^g(\alpha,\chi)
 =
 \int_X\Phi_n^f(\alpha,\chi)
       +\varepsilon\int_X\chi^n
 =
 \varepsilon\int_X\chi^n>0.
\]
For every $n-d\le q<n$, we also have
\[
 \Phi_q^g(\alpha,\chi)=\Phi_q^f(\alpha,\chi),
\]
because these forms do not involve the zeroth coefficient.
Therefore the same K\"ahler form $\kappa$ satisfies all
the mixed inequalities required for $g$.
Theorem~\ref{thm:positive}, proved above, yields a smooth
closed form
\[
 \omega_\varepsilon
 \in[\alpha]\cap\Ccal_{f+\varepsilon}(\chi).
\]
This proves\/ {\rm (ii)} for every
$0<\varepsilon<\Delta_f$.

If $d=1$, then $f+\varepsilon$ is a monic linear
polynomial for every $\varepsilon>0$.
The same top integral computation and the degree-one
case of Theorem~\ref{thm:positive} prove\/ {\rm (ii)}
for every $\varepsilon>0$.

Conversely, assume\/ {\rm (ii)}.
Fix one admissible $\varepsilon>0$ and a smooth closed
form
\[
 \omega\in[\alpha]\cap\Ccal_{f+\varepsilon}(\chi).
\]
For $n-d\le q<n$, put
\[
 \Theta_q=\Phi_q^f(\omega,\chi)
          =\Phi_q^{f+\varepsilon}(\omega,\chi).
\]
By Lemma~\ref{lem:geometry}, $\Theta_q$ is strictly
positive on every complex $q$-plane when $q\ge1$.
If $q=0$, then $\Theta_0=1$.

Fix an arbitrary K\"ahler form $\kappa$.
For every pair
\[
 n-d\le q\le\min\{p,n-1\},
\]
the form $\kappa^{p-q}\wedge\Theta_q$ is strictly
positive on every complex $p$-plane.
Indeed, in a $\kappa$-unitary coframe on such a plane,
its coefficient is a positive sum of the values of
$\Theta_q$ on complementary $q$-planes.

For $p\ge1$, consider the ratio
\[
 R_{p,q}(x,P)
 =
 \frac{
   \bigl(\kappa^{p-q}\wedge\Theta_q\bigr)|_P
 }{
   \bigl(\kappa^{p-q}\wedge\chi^q\bigr)|_P
 },
 \qquad
 P\in\operatorname{Gr}_p(T_x^{1,0}X).
\]
It is continuous and strictly positive.
The Grassmann bundle is compact, and there are only
finitely many relevant pairs $(p,q)$.
Consequently, there is a constant $\delta_\kappa>0$
such that
\[
 \kappa^{p-q}\wedge\Theta_q
 \ge
 \delta_\kappa\,\kappa^{p-q}\wedge\chi^q
\]
on every complex $p$-plane for every relevant pair.
When $(p,q)=(0,0)$ occurs, we additionally choose
$\delta_\kappa\le1$. 
Let $V\subset X$ be an irreducible analytic subvariety
of dimension $p$, including $V=X$.
Integration over its regular locus gives
\[
 \int_V\kappa^{p-q}\wedge\Theta_q
 \ge
 \delta_\kappa
 \int_V\kappa^{p-q}\wedge\chi^q>0.
\]
All the forms involved are closed, and $[\omega]=[\alpha]$.
Pairing with the closed integration current $[V]$
therefore gives
\[
 \int_V\kappa^{p-q}\wedge\Phi_q^f(\alpha,\chi)
 =
 \int_V\kappa^{p-q}\wedge\Theta_q.
\]
Hence
\[
 \int_V\kappa^{p-q}\wedge\Phi_q^f(\alpha,\chi)
 \ge
 \delta_\kappa
 \int_V\kappa^{p-q}\wedge\chi^q>0.
\]
This proves\/ {\rm (i)} and the asserted uniform
lower bound for every K\"ahler form $\kappa$.
\end{proof}

\begin{corollary}\label{cor:variable}
Suppose $c_i$ is constant for $i\ge1$, while $c_0$ is smooth.
For $d\ge2$, assume that
$f_x(t)=t^d-\sum_{i=1}^{d-1}\binom di c_it^i-c_0(x)$ is strictly-Noetherian at any point. For $d=1$, impose no further
root condition. Assume also the top identity
\[
 \int_X\alpha^d\chi^{n-d}
 -\sum_{i=1}^{d-1}\binom di c_i\int_X\alpha^i\chi^{n-i}
 =\int_Xc_0\chi^n.
\]
If the strict mixed inequalities hold, then \eqref{eq:properuniform} holds for some $\varepsilon_0>0$.
\end{corollary}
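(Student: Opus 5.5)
The plan is to reduce to Theorem~\ref{thm:main} by replacing the variable zeroth coefficient with a constant. The key observation is that none of the quantities in \eqref{eq:mixed} or in \eqref{eq:properuniform} involves $c_0$. Indeed, in \eqref{eq:Phi} the zeroth coefficient appears only in $\Phi_n^f$ (the case $j=d$, $i=0$), while every mixed test uses indices $q\le n-1$. So I will set
\[
 \bar c_0:=\frac{\int_Xc_0\chi^n}{\int_X\chi^n},
 \qquad
 \bar f(t):=t^d-\sum_{i=1}^{d-1}\binom di c_it^i-\bar c_0 .
\]
By construction, $\bar f$ satisfies the constant-coefficient top identity \eqref{eq:topzero}, because the right side of the assumed identity equals $\bar c_0\int_X\chi^n$. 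For $n-d\le q\le n-1$ we also have $\Phi_q^{\bar f}=\Phi_q^{f_x}$. Hence the same K\"ahler form $\kappa$ satisfies the strict mixed inequalities for $\bar f$.

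The only point to verify is that $\bar f$ is strictly right-Noetherian when $d\ge2$. All derivatives of $f_x$ of positive order are independent of $x$ and coincide with those of $\bar f$. Consequently $r_1=r(f_x')=r(\bar f')$ and the later roots $r(\bar f^{(j)})$, $j\ge1$, are independent of $x$, so the chain $r(\bar f')\ge\cdots\ge r(\bar f^{(d-1)})$ is inherited directly from the hypothesis on $f_x$. For the first gap I use \eqref{eq:firstgapvalue}. Write $h(t)=t^d-\sum_{i\ge1}\binom di c_it^i$, so that $f_x=h-c_0(x)$. The strict gap at every point means $h(r_1)-c_0(x)<0$ for all $x$. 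By compactness this gives $\min_Xc_0>h(r_1)$, and therefore $\bar c_0\ge\min_Xc_0>h(r_1)$, i.e.\ $\bar f(r_1)<0$. Thus $\bar f$ has a strict first gap. For $d=1$ no root condition is needed in Theorem~\ref{thm:main}.

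With these facts in hand, I will apply Theorem~\ref{thm:main} to $\bar f$, using its implication (i)$\Rightarrow$(ii) together with the final uniform assertion. This yields, for $\kappa=\chi$, a constant $\delta_\chi>0$ with
\[
 \int_V\chi^{p-q}\Phi_q^{\bar f}([\alpha],[\chi])\ge\delta_\chi\int_V\chi^p
\]
for all admissible $V,p,q$. Specializing to $q=p<n$, and using $\Phi_p^{\bar f}=\Phi_p^{f_x}$ again, gives \eqref{eq:properuniform} with $\varepsilon_0=\delta_\chi$.

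The only genuinely delicate step is the first-gap check for the averaged polynomial. If that step were to fail, one could instead take $\bar c_0=\min c_0$ and correct the top identity by adding a positive multiple of $\chi^n$. However, the averaging argument above already gives strictness, since $r_1$ does not depend on $x$.
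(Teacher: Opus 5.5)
Your proposal is correct and follows the paper's proof: you replace $c_0$ by its $\chi^n$-average, obtain the strict first gap from $c_0(x)>P(r_1)$ (your compactness step makes this explicit), and apply Theorem~\ref{thm:main} with $\kappa=\chi$ and $q=p$. The only difference is the case $d=1$, where the paper simply notes $\Phi_{n-1}^f=\chi^{n-1}$ to get $\varepsilon_0=1$ instead of invoking Theorem~\ref{thm:main}; both routes are valid.
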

\begin{proof}
For $d\ge2$, write $P(t)=t^d-\sum_{i=1}^{d-1}\binom di c_it^i$ and
$r_1=r(P')$. The strict first gap says $c_0(x)>P(r_1)$.
We average $c_0$ against $\chi^n$ and obtain a constant
$\bar c_0>P(r_1)$. The averaged polynomial has the required root
ordering and top identity. Its positive-order derivatives are
unchanged, so Theorem~\ref{thm:main} gives \eqref{eq:properuniform}. For $d=1$, we have $\Phi_{n-1}^f=\chi^{n-1}$,
so \eqref{eq:properuniform} holds with $\varepsilon_0=1$.
\end{proof}

In full degree, we use the averaged coefficient to construct a
subsolution and then solve the equation with the original coefficient.

\subsection{The original full-degree equation}

\begin{proof}[Proof of Theorem~\ref{thm:fullPDE}]
We first assume (i). Put
\[
 P(t)=t^n-\sum_{i=1}^{n-1}\binom ni c_it^i,
 \qquad \bar c_0=\frac{\int_Xc_0\chi^n}{\int_X\chi^n}.
\]
We put $r_1=r(P')$ and
\[
 \delta_0=\min_{x\in X}\{c_0(x)-P(r_1)\}>0.
\]
The strict first root gap and compactness give
\begin{equation}\label{eq:averagedgap}
 \bar c_0-P(r_1)
 =\frac{1}{\int_X\chi^n}
       \int_X(c_0-P(r_1))\chi^n\ge\delta_0.
\end{equation}
Thus $\bar f=P-\bar c_0$ has the required root ordering, and
\eqref{eq:fullidentity} gives $\int_X\Phi_n^{\bar f}(\alpha,\chi)=0$.

For $0<\varepsilon<\delta_0$, Theorem~\ref{thm:main} gives a
smooth closed form $\xi\in\alpha\cap\Ccal_{\bar f+\varepsilon}(\chi)$.
Since
\[
 \partial_I\Pol_n(f_x)
 =\partial_I\Pol_n(\bar f+\varepsilon)
 \qquad(|I|\ge1),
\]
the derivative cones coincide. Hence $\xi$ is a subsolution
for the original equation \eqref{eq:fullPDE}.

We translate by $c=c_{n-1}$. As shown in
Section~\ref{sec:cones}, the polynomial $f_x(y+c)$ has vanishing
coefficient of $y^{n-1}$, constant positive-order coefficients,
and a smooth zeroth coefficient. The form $\xi-c\chi$ is a
positive definite subsolution, and the translated class satisfies
the integral identity. We apply \cite[Theorem 1.4]{Lin} and
translate back to obtain a unique smooth solution $\omega\in\alpha\cap\mathcal D_{f_x}(\chi)$.
The normalized derivative inequalities give $\omega-c\chi>0$.

Conversely, a solution in $\mathcal D_{f_x}(\chi)$ has strictly positive forms
$\Phi_q^{f_x}(\omega,\chi)$ for $0\le q<n$. Wedging with $\kappa^{p-q}$ and integrating
gives all the mixed inequalities. As in the proof of
Theorem~\ref{thm:main}, we take a positive minimum on the compact
Grassmann bundles to obtain a uniform lower bound.
\end{proof}

\section{Intersection inequalities on singular subvarieties}

We use $\Gamma_k$ as defined in the introduction. Recall that
$\Ccal_{t^k}(\chi)=\Gamma_k(\chi)$.

\begin{lemma}\label{lem:KT}
Suppose $\alpha\in\Kcal_{k,\chi}$. Let $V$ be irreducible of dimension $n-k\le p\le n$, and put
$s=k-n+p$. Then
\[
 u_i=\int_V\alpha^i\wedge\chi^{p-i}>0\quad(0\le i\le s),
 \qquad u_i^2\ge u_{i-1}u_{i+1}\quad(1\le i<s).
\]
For a point, we use $u_0=1$.
\end{lemma}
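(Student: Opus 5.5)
Because $[\alpha]\in\Kcal_{k,\chi}$, the class contains a smooth closed form $\omega$ with $\omega\in\Gamma_k(\chi)$. Every quantity $u_i$ depends only on cohomology classes, so I will work with $\omega$ throughout. For the singular variety $V$, I will pass to a resolution $\pi:\widetilde V\to V$. This is a compact K\"ahler manifold of dimension $p$, and $\pi$ is an isomorphism over the regular locus. The projection formula then gives
\[
u_i=\int_{\widetilde V}(\pi^*\omega)^i\wedge(\pi^*\chi)^{p-i}.
\]
The difficulty is that $\pi^*\chi$ is only semipositive and degenerates along the exceptional set. To handle this, I will fix a K\"ahler form $\sigma$ on $\widetilde V$ and set $\chi_\epsilon=\pi^*\chi+\epsilon\sigma$. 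I will prove the statements for
\[
u_i^\epsilon=\int_{\widetilde V}(\pi^*\omega)^i\chi_\epsilon^{p-i}
\]
and let $\epsilon\to0$.

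\emph{Positivity.} The integrand can be computed pointwise on the regular part $V_{\rm reg}$. At a point $x$ with $P=T_xV$, the restriction $\omega|_P$ lies in $\Gamma_s(\chi|_P)$. This follows from the restriction statement of Section~\ref{sec:cones} applied to $f=t^k$: the polynomial $g$ is a multiple of $t^{s}$, and $\Ccal_{t^s}=\Gamma_s$. Equivalently, it is the classical fact that restricting a $k$-positive form to a $p$-plane gives an $(k-n+p)$-positive form. Hence $\sigma_i(\lambda(\chi|_P^{-1}\omega|_P))>0$ for $i\le s$, so $\omega^i\wedge\chi^{p-i}$ is a strictly positive volume form on $V_{\rm reg}$. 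Integrating gives $u_i>0$. This argument needs no resolution at all; the case of a point is trivial.

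\emph{Log-concavity.} This is where I expect the main obstacle. The natural tool is a G\aa rding / Alexandrov--Fenchel type inequality for $\Gamma_s$-positive forms. The pointwise form is the G\aa rding inequality: for $\lambda\in\Gamma_s$, the normalized sequence $S_i=\sigma_i/\binom pi$ satisfies the Newton--Maclaurin inequality $S_i^2\ge S_{i-1}S_{i+1}$. Integrating this pointwise inequality only gives $\int S_i^2\ge\int S_{i-1}S_{i+1}$, which is not what is needed. I would first try the following. By Cauchy--Schwarz,
\[
\Bigl(\int S_i\,dV\Bigr)^2\le\int S_{i-1}\,dV\int \frac{S_i^2}{S_{i-1}}\,dV,
\]
but this gives the wrong direction. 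So a genuinely global argument is needed, in the spirit of the Hodge index theorem / Khovanskii--Teissier for $(1,1)$-classes.

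The route I would take uses the pseudo-K\"ahler structure. On $\widetilde V$ with the K\"ahler forms $\chi_\epsilon$, consider the class $[\pi^*\omega]$; its pointwise restriction is $\Gamma_s$-positive with respect to $\chi_0$ on the regular locus. By Sun's and Sz\'ekelyhidi's existence theorem for complex Hessian equations (cf. Theorem~\ref{thm:hessian}), one can produce representatives solving $\omega_\epsilon^s\wedge\chi_\epsilon^{p-s}=c\,\chi_\epsilon^p$. For such constant-ratio solutions, the mixed integrals reduce to pointwise Maclaurin inequalities, which now point in the correct direction (this is the Xiao / Dinew--Lu argument for Hessian Khovanskii--Teissier inequalities). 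One subtlety is whether $[\pi^*\omega]$ is $\Gamma_s$-positive with respect to $\chi_\epsilon$ for small $\epsilon>0$. This can fail near the exceptional divisor. I would fix it by perturbing to $[\pi^*\omega]+\epsilon'[\sigma]$, which is admissible for $\epsilon\ll\epsilon'$. After proving the inequality for the perturbed data, I would let $\epsilon\to0$ and then $\epsilon'\to0$; the inequalities $u_i^2\ge u_{i-1}u_{i+1}$ survive in the limit.
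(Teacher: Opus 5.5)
\textbf{Gap in the log-concavity step.} Your positivity argument is correct: restriction to $T_xV_{\rm reg}$ lands in $\Gamma_s$. The paper instead proves the quantitative bound $\omega^a\chi^b\ge\varepsilon_0^a\chi^{a+b}$, because it reuses it below. Your overall plan for log-concavity is also the paper's plan: resolve $V$, perturb to an $s$-positive pair, prove the inequality there, and let the perturbation go to zero.

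However, the step that is supposed to produce $u_i^2\ge u_{i-1}u_{i+1}$ fails as you describe it. Suppose $\omega_\epsilon^s\wedge\chi_\epsilon^{p-s}=c\,\chi_\epsilon^p$. Pointwise Maclaurin $S_i\ge S_s^{i/s}=c^{i/s}$ then integrates only to the G\aa rding/Dinew--Lu endpoint bound $u_i\ge u_0^{1-i/s}u_s^{i/s}$. This bound does not imply log-concavity. For example, $(u_0,u_1,u_2,u_3)=(1,1,2,1)$ satisfies both endpoint bounds, yet $u_1^2<u_0u_2$. To integrate Newton's inequality you would need $S_i/S_{i-1}$, not $S_s$, to be constant.

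You have two ways to repair this.
\begin{enumerate}
\item Cite the Khovanskii--Teissier inequalities for $s$-positive classes (Xiao, Theorem B and Proposition 3.14), as the paper does.
\item Prove a Hodge-index inequality directly. Let $A$ be closed and $s$-positive relative to the K\"ahler form $B$ on $\widetilde V$, and fix $1\le i<s$. The closed form $\Theta=A^{i-1}\wedge B^{p-i}$ is strictly positive. Hence the linear elliptic equation $\ddc v\wedge\Theta=(cB-A)\wedge\Theta$ with $c=u_i/u_{i-1}$ is solvable, since the right side integrates to zero. G\aa rding's theory makes $(a,b)\mapsto a\wedge b\wedge A^{i-1}\wedge B^{p-i-1}$ pointwise Lorentzian with $B$ in the positive cone. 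Reverse Cauchy--Schwarz for $A+\ddc v$ and $B$ then integrates to $u_{i+1}\le c^2u_{i-1}$.
\end{enumerate}

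\textbf{Unproved perturbation claim.} You assert without proof that $[\pi^*\omega]+\epsilon'[\sigma]$ is $s$-positive relative to $\chi_\epsilon$ once $\epsilon\ll\epsilon'$. This is true, but it is the technical core of the paper's proof. It does not follow from the qualitative positivity on $V_{\rm reg}$, for two reasons. Along $E$ the form $\pi^*\chi$ degenerates. Also, $s$-positivity of a form that is not semipositive is not preserved when the reference form decreases. So you need an estimate that is uniform up to $E$.

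The paper handles this as follows. It fixes a margin $\omega-\varepsilon_0\chi\in\Gamma_k(\chi)$, which gives strongly positive inequalities $\omega^a\chi^b\ge\varepsilon_0^a\chi^{a+b}$ for $b\ge n-k$. These pull back to $\widetilde V$, even along $E$. Write $A=\pi^*\omega$ and $B=\pi^*\chi$, and choose $M$ with $-M\chi\le\omega\le M\chi$. The pulled-back inequalities are combined with $-M^aB^{a+b}\sigma^c\le A^aB^b\sigma^c\le M^aB^{a+b}\sigma^c$. Grouping the expansion by powers of $\sigma$ then shows that $A+\delta\sigma$ is $s$-positive relative to $B+\tau\delta\sigma$ for one fixed $\tau>0$ and every $\delta>0$.
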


\begin{proof}
We first prove positivity of the numbers $u_i$. We choose a smooth
closed $k$-positive form $\omega\in\alpha$.
By compactness, we can choose $\epsilon>0$ so that
$\omega-\epsilon\chi$ remains $k$-positive. We can also choose
$M\ge1$ with $-M\chi\le\omega\le M\chi$.

The derivative forms of $t^k$ are positive. Expanding
$\omega=(\omega-\epsilon\chi)+\epsilon\chi$, we obtain
\begin{equation}\label{eq:formlower}
 \omega^a\chi^b\ge\epsilon^a\chi^{a+b},
 \qquad a\ge0,\quad b\ge n-k,\quad a+b\le n,
\end{equation}
in the cone of strongly positive forms. For $a=k$, the assertion
uses the positive top form in the definition of $\Gamma_k$.
We apply \eqref{eq:formlower} with $a=i$ and $b=p-i$ and integrate
on $V$. It follows that
\begin{equation}\label{eq:momentlower}
 u_i\ge\epsilon^i\int_V\chi^p>0.
\end{equation}
This proves the lemma when $s\le1$, including $k=1$.

For $s\ge2$, we choose a compact K\"ahler resolution
$\pi:W\to V\subset X$ and a K\"ahler form $\rho$ on $W$.
Write $A=\pi^*\omega$ and $B=\pi^*\chi$. The form $B$ is
semipositive and $-MB\le A\le MB$. We shall use the estimate
\begin{equation}\label{eq:absolutebound}
 -M^aB^{a+b}\rho^c\le A^aB^b\rho^c
 \le M^aB^{a+b}\rho^c,\qquad a+b+c=p.
\end{equation}
When $B$ is positive, we diagonalize $A$ relative to $B$.
Each coefficient is an average of products of $a$ eigenvalues,
each bounded in absolute value by $M$. This gives
\eqref{eq:absolutebound}. For semipositive $B$, we first replace
$B$ by $B+z\rho$, then let $z$ decrease to zero. The inequalities
pass to the limit in the closed cone of strongly positive forms.

We next show that one sufficiently small $\tau>0$ makes
\begin{equation}\label{eq:resperturb}
 A_\delta=A+\delta\rho
 \quad\text{$s$-positive relative to}\quad
 B_\delta=B+\tau\delta\rho
\end{equation}
for any $\delta>0$. Put $\eta=\tau\delta$.
For $1\le j\le s$, expand $A_\delta^jB_\delta^{p-j}$ and
group terms by the total exponent $r$ of $\rho$:
\begin{equation}\label{eq:groups}
 G_{j,r}=\rho^r
 \sum_{l=\max(0,r-p+j)}^{\min(j,r)}
 \binom jl\binom{p-j}{r-l}
 \delta^l\eta^{r-l}A^{j-l}B^{p-j-r+l}.
\end{equation}
If $r\le j$, the term $l=r$ has the lower bound
\[
 \binom jr\delta^r\epsilon^{j-r}B^{p-r}\rho^r,
\]
because $p-j\ge p-s=n-k$ and we can pull back
\eqref{eq:formlower}. any other term has $l<r$.
By \eqref{eq:absolutebound}, its negative part is bounded by
a constant times $\delta^r\tau^{r-l}B^{p-r}\rho^r$.
Thus, for some finite constant $C_{j,r}$ and $0<\tau\le1$,
\[
 G_{j,r}\ge\delta^r
 \left(\binom jr\epsilon^{j-r}-C_{j,r}\tau\right)
 B^{p-r}\rho^r.
\]

If $r>j$, we use the term $l=j$ instead. Its coefficient is
$\binom{p-j}{r-j}\delta^j\eta^{r-j}$.
Each other term gains the factor $\tau^{j-l}$ after division
by $\delta^j\eta^{r-j}$. Consequently, for a finite
$C'_{j,r}$,
\[
 G_{j,r}\ge\delta^j\eta^{r-j}
 \left(\binom{p-j}{r-j}-C'_{j,r}\tau\right)
 B^{p-r}\rho^r.
\]
We choose one $\tau>0$ so that all the finitely many parentheses
are positive. any group is then nonnegative. The group with
$r=p$ consists of the single term
$\delta^j\eta^{p-j}\rho^p>0$. This proves
\eqref{eq:resperturb}, even on the exceptional locus.

The classes $[A_\delta]$ and $[B_\delta]$ are $s$-positive
relative to the K\"ahler form $B_\delta$. We may therefore apply
\cite[Theorem B and Proposition 3.14]{Xiao} on $W$ to obtain
log concavity of
\[
 u_i(\delta)=\int_WA_\delta^iB_\delta^{p-i},
 \qquad 0\le i\le s.
\]
Keeping $\tau$ fixed, we let $\delta$ decrease to zero. These
integrals are polynomials in $\delta$, so their limits are
$\int_WA^iB^{p-i}=u_i$. Passing to the limit gives log concavity of $u_i$, while
\eqref{eq:momentlower} gives strict positivity.
\end{proof}

\begin{lemma}\label{lem:quotientmixed}
Suppose $\alpha\in\Kcal_{k,\chi}$ and $1\le\ell<k\le n$. Put
\[
 V_j=\int_X\alpha^j\chi^{n-j},\qquad C=V_k/V_\ell.
\]
Both volumes are positive. We use the forms $\Psi_q$ from \eqref{eq:psi}.
If \eqref{eq:quotendpoint} holds
for any irreducible proper analytic $V$, then
\[
 \int_V\chi^{p-q}\Psi_q>0,
 \qquad n-k\le q\le\min\{p,n-1\},
\]
for all irreducible analytic $V$, including $X$.
\end{lemma}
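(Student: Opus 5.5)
The plan is to reduce every mixed number to the moment sequence of Lemma~\ref{lem:KT} and then use log-concavity to compare each index $q$ with the endpoint $q=p$. Fix an irreducible $V$ of dimension $p$, including $V=X$, and write $u_i=\int_V\alpha^i\chi^{p-i}$ with $s=k-n+p$. We may assume $p\ge n-k$, since otherwise there is no admissible $q$. For $n-k\le q\le\min\{p,n-1\}$, set $i=k-n+q\in[0,s]$ and $m=k-\ell\ge1$. By \eqref{eq:psi},
\[
 \int_V\chi^{p-q}\Psi_q=
 \begin{cases}
 u_i, & q<n-\ell,\\
 u_i-C\theta_q\,u_{i-m}, & q\ge n-\ell.
 \end{cases}
\]
Note that $i-m=\ell-n+q\ge0$ in the second case. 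Since $\alpha\in\Kcal_{k,\chi}$, Lemma~\ref{lem:KT} gives $u_i>0$ for $0\le i\le s$. This settles the case $q<n-\ell$, and with it the cases $p<n-\ell$ and points. It also shows $V_k,V_\ell>0$, so $C>0$.

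Next, for $q\ge n-\ell$, I use the log-concavity $u_i^2\ge u_{i-1}u_{i+1}$ from Lemma~\ref{lem:KT}, together with positivity. These imply that the ratios $u_i/u_{i-1}$ are nonincreasing in $i$ on $1\le i\le s$. Writing $u_i/u_{i-m}$ as a product of $m$ consecutive ratios shows that $u_i/u_{i-m}$ is nonincreasing for $m\le i\le s$. Hence
\[
 \frac{u_i}{u_{i-m}}\ \ge\ \frac{u_s}{u_{s-m}}.
\]
On the other side, $\theta_q=\prod_{j=0}^{m-1}\frac{k-n+q-j}{k-j}$ is a product of positive factors, each increasing in $q$, so $\theta_q\le\theta_p$. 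Each factor is strictly less than $1$ when $q<n$, so $\theta_q<1=\theta_n$ for $q<n$.

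It then remains to compare with the endpoint, and I split into two cases.
\begin{enumerate}
\item If $V$ is proper, so $n-\ell\le p<n$, the hypothesis \eqref{eq:quotendpoint} (after dividing by $k!/(k-n+p)!$) says exactly $u_s>C\theta_p u_{s-m}$. Then
\[
 u_i\ \ge\ \frac{u_s}{u_{s-m}}\,u_{i-m}\ >\ C\theta_p\,u_{i-m}\ \ge\ C\theta_q\,u_{i-m}.
\]
\item If $V=X$, then $s=k$ and $u_k=V_k=CV_\ell=C\theta_n u_\ell$ by the definition of $C$. For $q\le n-1$,
\[
 u_i\ \ge\ \frac{u_k}{u_\ell}\,u_{i-m}\ =\ C\,u_{i-m}\ >\ C\theta_q\,u_{i-m},
\]
where the last step uses the strict inequality $\theta_q<1$ and $u_{i-m}>0$.
\end{enumerate}
In both cases the quantity is positive, which proves the lemma.

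I expect no step to be a serious obstacle, since the analytic input (log-concavity on singular $V$ via a resolution) is already packaged in Lemma~\ref{lem:KT}. The points that need care are the index bookkeeping ($i-m\ge0$ and $i\le s$) and the two sources of strictness: the hypothesis on proper $V$, and $\theta_q<1$ when $V=X$.
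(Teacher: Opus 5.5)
Your proposal is correct and follows the paper's proof essentially step for step. Like the paper, you reduce to the moment sequence $u_i$ of Lemma~\ref{lem:KT}, use the monotone consecutive ratios to get $u_i/u_{i-m}\ge u_s/u_{s-m}$, and use that $\theta_q$ increases in $q$ with $\theta_q<1=\theta_n$ for $q<n$. You then treat proper $V$ through the endpoint hypothesis and $V=X$ through $u_k/u_\ell=C$, as the paper does.
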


\begin{proof}
Fix $V$ of dimension $p$, and use the numbers $u_i$ of
Lemma~\ref{lem:KT}. Put $s=k-n+p$, $h=k-\ell$, and
$a=k-n+q$. If $q<n-\ell$, the required expression is
$u_a>0$. Otherwise $h\le a\le s$ and
\begin{equation}\label{eq:quotratio}
 \int_V\chi^{p-q}\Psi_q
 =u_a-C\theta_qu_{a-h}.
\end{equation}
Let $r_i=u_i/u_{i-1}$. Log concavity gives
\[
 r_1\ge r_2\ge\cdots\ge r_s>0.
\]
Since $a\le s$, each factor on the left below is at least
its corresponding factor on the right:
\[
 \frac{u_a}{u_{a-h}}=\prod_{i=a-h+1}^ar_i
 \ge\prod_{i=s-h+1}^sr_i=\frac{u_s}{u_{s-h}}.
\]
The definition of $\theta_q$ and $a=k-n+q$ give
\[
 \frac{\theta_{q+1}}{\theta_q}
       =\frac{a+1}{a+1-h}>1\qquad(q\ge n-\ell),
 \qquad \theta_n=1.
\]
If $p<n$, hypothesis \eqref{eq:quotendpoint}
says $u_s/u_{s-h}>C\theta_p$. Because $q\le p$, we obtain
$u_a/u_{a-h}>C\theta_q$, which makes
\eqref{eq:quotratio} positive.

If $V=X$, then $s=k$ and $u_s/u_{s-h}=C$. In this case
$q<n$, so $\theta_q<1$. We therefore have
$u_a/u_{a-h}\ge C>C\theta_q$. This proves the strict mixed
inequalities on $X$ as well.
\end{proof}

\section{The Hessian equations}\label{sec:pde}

We apply the geometric theorem to $k$-Hessian and Hessian quotient
equations.

\begin{proof}[Proof of Theorem~\ref{thm:hessian}]
We only need to prove that (i) implies (ii).
We apply Theorem~\ref{thm:positive} to $g(t)=t^k$.
Its derivative roots all equal zero and its top integral is $V_k>0$.
For any relevant $q$,
\[
 \Phi_q^g(\alpha,\chi)=\alpha^{k-n+q}\chi^{n-k}.
\]
By (i), all these mixed integrals are positive. Theorem~\ref{thm:positive}
therefore gives a smooth closed form in
$\alpha\cap\Ccal_{t^k}(\chi)=\alpha\cap\Gamma_k(\chi)$, proving (ii).

\end{proof}

Next, we consider the Hessian quotient equation. 

\begin{proof}
Set
$
f(t)=t^k-Ct^\ell.$
For $0\le j\le \ell$, the largest root of $f^{(j)}$ is
\[
r_j=
\left(
\frac{\ell!(k-j)!}{k!(\ell-j)!}C
\right)^{1/(k-\ell)}.
\]
Moreover,
\[
r_0>r_1>\cdots>r_\ell>0
=r_{\ell+1}=\cdots=r_{k-1}.
\]
Thus $f$ satisfies the root conditions of
Theorem~\ref{thm:main}.

Since $
\Phi_q^f(\alpha,\chi)=\Psi_q,$
Theorem~\ref{thm:main} yields, for sufficiently small
$\varepsilon>0$, a smooth closed form
$
\omega\in\alpha\cap\Ccal_{f+\varepsilon}(\chi).
$
The pointwise cone conditions imply
$
\omega\in\Gamma_k(\chi)
$
and
\begin{equation}\label{eq:subsolution}
k\omega^{k-1}\wedge\chi^{n-k}
-C\ell\omega^{\ell-1}\wedge\chi^{n-\ell}>0.
\end{equation}

Hence \cite[Corollary 3 and Proposition 22]{Szekelyhidi}
gives (ii).

 The necessity of (i) follows immediately from the pointwise
positivity of the corresponding $\Psi_q$ for an admissible
quotient solution.  Finally, if $\alpha\in\Kcal_{k,\chi}$,
Lemma~\ref{lem:quotientmixed} shows that
\eqref{eq:quotendpoint} implies all the mixed inequalities with
$\kappa=\chi$. The converse follows by taking $q=p$.
\end{proof}

For the quotient equation, there exists an $\epsilon_Q>0$
such that, for any irreducible proper $Z$ of dimension $p$,
\[
 \begin{cases}
 \displaystyle\int_Z\Psi_p\ge\epsilon_Q\int_Z\chi^p,
 & n-\ell\le p<n,\\[6pt]
 \displaystyle\int_Z\alpha^{k-n+p}\chi^{n-k}
       \ge\epsilon_Q\int_Z\chi^p,
 & n-k\le p<n-\ell.
 \end{cases}
\]

We test the quotient endpoint inequality only on proper
subvarieties. Indeed, at dimension $p=n$ its left side would be
$V_k-CV_\ell=0$ by the definition of $C$. Thus we interpret the subvariety condition in
\cite[Conjecture 1.5(2)]{Murakami} with $p<n$.

\section{Connected components and numerical paths}

Murakami  \cite[Conjecture 1.5(1)]{Murakami} formulated the path criterion for $k$-positive classes. He proved the case with Calabi symmetry. Applying the numerical criterion of Theorem~\ref{thm:positive}, we can solve Murakami's conjecture.

Recall the cone $\Kcal_{k,\chi}$ of $\Gamma_k$ classes. We define
\begin{align*}
 \Pcal_{k,\chi}
 &=\left\{\alpha:\int_Z\alpha^{k-n+p}\chi^{n-k}>0
       \text{ for any }Z,\ n-k+1\le p=\dim Z\le n\right\}.
\end{align*}
Here $Z$ ranges over all  irreducible  analytic subvarieties.

\begin{theorem}\label{thm:component}
 $\Kcal_{k,\chi}$ is the connected component of $\Pcal_{k,\chi}$ containing $[\chi]$. In particular, $\Kcal_{k,\chi}$ is open and convex.
Moreover, for any K\"ahler form $\kappa$, a class $\alpha\in\Pcal_{k,\chi}$ belongs to $\Kcal_{k,\chi}$ if and only if
\begin{equation}\label{eq:numericalpath}
\int_Z (\alpha+t[\kappa])^{k-n+p} \chi^{\,n-k}>0.
\end{equation}
for every $t\ge0$ and every irreducible analytic subvariety
$Z\subset M$ of dimension $n-k+1\le p\le n$.
\end{theorem}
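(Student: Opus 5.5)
The plan is to prove the theorem in three steps. First, $\Kcal_{k,\chi}\subset\Pcal_{k,\chi}$. Second, $\Kcal_{k,\chi}$ is open, convex and relatively closed in $\Pcal_{k,\chi}$. Third, the path criterion follows from the component description via Theorem~\ref{thm:positive} applied to $g(t)=t^k$.

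\textbf{Step 1: $\Kcal_{k,\chi}\subset\Pcal_{k,\chi}$ and basic properties of $\Kcal_{k,\chi}$.}
\begin{itemize}
\item The inclusion is Lemma~\ref{lem:KT}: for $\alpha\in\Kcal_{k,\chi}$ and $\dim Z=p\ge n-k+1$, we get $u_s=\int_Z\alpha^{k-n+p}\chi^{n-k}>0$ with $s=k-n+p$.
\item Openness: $\Gamma_k(\chi)$ is open and $X$ is compact, so a small perturbation of a smooth representative stays $k$-positive.
\item Convexity: $\Gamma_k$ is convex, and sums of representatives represent sums of classes.
\item $[\chi]\in\Kcal_{k,\chi}$, and $\Kcal_{k,\chi}$ is connected (being convex).
\end{itemize}
Hence $\Kcal_{k,\chi}$ lies in the connected component $\Pcal^0$ of $\Pcal_{k,\chi}$ containing $[\chi]$.

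\textbf{Step 2: $\Kcal_{k,\chi}$ is relatively closed in $\Pcal_{k,\chi}$.} Let $[\alpha_j]\in\Kcal_{k,\chi}$ converge to $[\alpha]\in\Pcal_{k,\chi}$. The aim is to verify the hypotheses of Theorem~\ref{thm:positive} for $g=t^k$, i.e.\ to find a Kähler $\kappa$ with
\[
\int_V\kappa^{p-q}\alpha^{k-n+q}\chi^{n-k}>0,\qquad n-k\le q\le\min\{p,n-1\}.
\]
\begin{itemize}
\item The case $q=n-k$ is trivial, since the integrand is $\kappa^{p-q}\chi^{n-k}$.
\item For $q>n-k$, pass to limits in the mixed Khovanskii--Teissier inequalities. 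Apply Xiao's log concavity on a resolution of $V$, as in Lemma~\ref{lem:KT}, to the mixed sequence
\[
w_i=\int_V\kappa^{p-q}\alpha_j^{i}\chi^{q-i}.
\]
This gives log concavity and positivity for each $j$. In the limit the $w_i$ are log concave and nonnegative, with $w_0>0$.
\item The endpoint needed is $\int_V\alpha^{k-n+p}\chi^{n-k}>0$, which is exactly the $\Pcal$ condition applied to the subvarieties of $V$. Log concavity with positive endpoints then forces all intermediate terms to be positive.
\item This interpolation of mixed terms with $\kappa^{p-q}$ is the main obstacle. The first thing to try is $\kappa=\chi$: then the mixed numbers become $u_a$ on $V$ itself. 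Lemma~\ref{lem:KT}'s log-concavity in the limit, together with $u_0>0$ and $u_s>0$ (from $\alpha\in\Pcal$), yields $u_a>0$ for all $0\le a\le s$. The top integral $V_k>0$ is part of the $\Pcal$ condition with $Z=X$.
\end{itemize}
Theorem~\ref{thm:positive} then gives $[\alpha]\in\Kcal_{k,\chi}$. Being open, closed in $\Pcal^0$, and nonempty, $\Kcal_{k,\chi}$ equals $\Pcal^0$.

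\textbf{Step 3: the path criterion.}
\begin{itemize}
\item Suppose \eqref{eq:numericalpath} holds for all $t\ge0$. The ray $\alpha+t[\kappa]$ lies in $\Pcal_{k,\chi}$ and is connected. For large $t$ the class $\alpha+t[\kappa]$ lies in $\Kcal_{k,\chi}$, because a representative $\theta+t\kappa$ is positive. The segment from it to $[\chi]$ through Kähler classes sits inside $\Kcal_{k,\chi}$. So the ray lies in $\Pcal^0=\Kcal_{k,\chi}$, giving $\alpha\in\Kcal_{k,\chi}$.
\item Conversely, if $\alpha\in\Kcal_{k,\chi}$, then $\alpha+t[\kappa]\in\Kcal_{k,\chi}\subset\Pcal_{k,\chi}$ for all $t\ge0$, by the upper-set property of $\Gamma_k$ and Step 1.
\end{itemize}
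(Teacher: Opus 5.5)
Your outline matches the paper's proof: the inclusion $\Kcal_{k,\chi}\subset\Pcal_{k,\chi}$, openness and convexity, relative closedness by checking the hypotheses of Theorem~\ref{thm:positive} for $g=t^k$ with $\kappa=\chi$ (so the mixed numbers are $u_a=\int_V\alpha^a\chi^{p-a}$), the open-and-closed argument, and the path criterion via the ray joined to a K\"ahler segment. You can drop the general-$\kappa$ sequence $w_i$. The condition $\Pcal_{k,\chi}$ supplies no positive upper endpoint for a sequence containing $\kappa^{p-q}$, and Theorem~\ref{thm:positive} needs only one K\"ahler form.

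There is a genuine gap in the limiting step of Step~2. After letting $j\to\infty$, you retain only the three-term inequalities $u_i(\alpha)^2\ge u_{i-1}(\alpha)u_{i+1}(\alpha)$ with $u_i(\alpha)\ge0$. Together with $u_0,u_s>0$, these do \emph{not} force the interior terms to be positive once $s\ge3$. For example, $(1,0,0,1)$ satisfies $0\ge 1\cdot 0$ and $0\ge 0\cdot 1$. For $s\le2$ your argument works, since $u_1^2\ge u_0u_2>0$. So the claim that limiting log-concavity with positive endpoints gives $u_a(\alpha)>0$ fails exactly for subvarieties of dimension $p\ge n-k+3$.

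The missing idea is to interpolate \emph{before} passing to the limit. For each $j$, all $u_i(\alpha_j)$ are positive by Lemma~\ref{lem:KT}. Hence $i\mapsto\log u_i(\alpha_j)$ is concave on $\{0,\dots,s\}$, and its chords give
\[
 u_r(\alpha_j)\ge u_0^{1-r/s}\,u_s(\alpha_j)^{r/s},\qquad 0\le r\le s,
\]
where $u_0=\int_V\chi^p$ does not depend on $j$. This chord inequality survives the limit. Combined with $u_s(\alpha)>0$ from $\alpha\in\Pcal_{k,\chi}$, it gives $u_r(\alpha)\ge u_0^{1-r/s}u_s(\alpha)^{r/s}>0$, which is exactly the paper's argument. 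With this replacement, the rest of your proof goes through as written.
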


\begin{proof}
Derivative positivity and positivity of the top Hessian form give
$\Kcal_{k,\chi}\subset\Pcal_{k,\chi}$.
Since $\Gamma_k$ is convex, 
$\Kcal_{k,\chi}$ is convex. To prove openness, choose smooth closed
representatives of a basis of $H^{1,1}(X,\R)$. A sufficiently
small linear combination of these forms preserves $k$-positivity,
uniformly over compact $X$. any K\"ahler class lies in
$\Kcal_{k,\chi}$.

We next prove that $\Kcal_{k,\chi}$ is closed relative to $\Pcal_{k,\chi}$. Suppose
$\alpha_j\in\Kcal_{k,\chi}$ and
$\alpha_j\to\alpha\in\Pcal_{k,\chi}$.
Fix an irreducible analytic $Z$ of dimension $p\ge n-k+1$,
put $s=k-n+p\ge1$, and write
\[
 u_r(\eta)=\int_Z\eta^r\chi^{p-r},\qquad 0\le r\le s.
\]
Lemma~\ref{lem:KT} gives a positive log-concave sequence at
each $\alpha_j$. Taking logarithms and interpolating between
the two endpoints yields
\begin{equation}\label{eq:interpolate}
 u_r(\alpha_j)\ge
 u_0^{1-r/s}u_s(\alpha_j)^{r/s},
 \qquad 0\le r\le s.
\end{equation}
For $s=1$ this is just the two endpoint identities.
We pass to the limit in \eqref{eq:interpolate}. Since
$u_s(\alpha)>0$ by membership in $\Pcal_{k,\chi}$, we obtain
\[
 u_r(\alpha)\ge u_0^{1-r/s}u_s(\alpha)^{r/s}>0.
\]
The argument applies to each fixed $Z$.

For a mixed index $n-k\le q\le\min\{p,n-1\}$, put
$r=k-n+q$. Then
\[
 \int_Z\chi^{p-q}\alpha^{k-n+q}\chi^{n-k}=u_r(\alpha)>0.
\]
If $p=n-k$, only the automatic constant test occurs.
If $p<n-k$, there is no test. The top condition on $X$ gives
$\int_X\alpha^k\chi^{n-k}>0$. Hence all hypotheses of
Theorem~\ref{thm:positive} for $g=t^k$ hold with
$\kappa=\chi$. We conclude that $\alpha\in\Kcal_{k,\chi}$,
which proves relative closedness.

It follows that $\Kcal_{k,\chi}$ is both open and closed
relative to $\Pcal_{k,\chi}$. It is connected by convexity and
contains $[\chi]$. Therefore it is exactly the connected
component containing $[\chi]$. A path starting in this component
stays there. Conversely, a class in $\Kcal_{k,\chi}$ joins
$[\chi]$ by a line segment in the convex cone. This proves the path criterion.

If $\alpha\in\Kcal_{k,\chi}$, we choose
$\omega\in\alpha\cap\Gamma_k(\chi)$. Then
$\omega+t\kappa\in\Gamma_k(\chi)$ for any $t\ge0$.
Thus \eqref{eq:numericalpath} holds for any $t\ge0$.
Conversely, suppose \eqref{eq:numericalpath} holds.
For sufficiently large $T$, the class $\alpha+T[\kappa]$
is K\"ahler, since $\theta+T\kappa>0$ for any fixed smooth
closed $\theta\in\alpha$ and sufficiently large $T$.
The segment $\{\alpha+t[\kappa]:0\le t\le T\}$ lies in
$\Pcal_{k,\chi}$. We join $\alpha+T[\kappa]$ to
$[\chi]$ inside the K\"ahler cone. The path criterion therefore
gives $\alpha$ in $\Kcal_{k,\chi}$.
\end{proof}

\begin{corollary}[Numerical branch criteria for the equations]
\label{cor:paths}
Let $\Pcal^0_{k,\chi}$ be the component of
$\Pcal_{k,\chi}$ containing $[\chi]$.
\begin{enumerate}
\item If $V_k>0$, then the pure Hessian equation
\eqref{eq:purePDE} has a unique $k$-positive solution for any
normalized smooth $F$ if and only if
$\alpha\in\Pcal^0_{k,\chi}$.
\item If $1\le\ell<k\le n$, $V_\ell\ne0$, and
$C=V_k/V_\ell>0$, then the quotient equation
\eqref{eq:quotPDE} has a unique $k$-positive solution if and only if
\[
 \alpha\in\Pcal^0_{k,\chi}
 \quad\text{and}\quad
 \int_Z\Psi_p>0\quad(n-\ell\le p=\dim Z<n).
\]
\end{enumerate}
In both assertions, component membership can be replaced by the
path condition or by requiring
$\alpha+t[\kappa]\in\Pcal_{k,\chi}$ for any $t\ge0$,
as in Theorem~\ref{thm:component}.
\end{corollary}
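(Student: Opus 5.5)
We derive Corollary~\ref{cor:paths} by combining Theorem~\ref{thm:component} with Theorems~\ref{thm:hessian} and~\ref{thm:quotient}. The analytic existence theorems already used in their proofs supply uniqueness: Sun and Sz\'ekelyhidi for the Hessian equation, and \cite[Corollary 3 and Proposition 22]{Szekelyhidi} for the quotient equation. The first step is to observe that $\Pcal^0_{k,\chi}=\Kcal_{k,\chi}$ by Theorem~\ref{thm:component}. Each assertion then reduces to identifying solvability with membership in $\Kcal_{k,\chi}$, together with the extra subvariety inequalities in the quotient case.

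For (i), suppose first that $\alpha\in\Kcal_{k,\chi}$. Choose a $k$-positive $\omega\in\alpha$. The pointwise positivity of $\omega^{k-n+q}\chi^{n-k}$ on $p$-planes, wedged with $\chi^{p-q}$, gives the mixed inequalities \eqref{eq:puremixed} with $\kappa=\chi$; equivalently, apply Lemma~\ref{lem:KT}. Theorem~\ref{thm:hessian} then solves \eqref{eq:purePDE} for every normalized $F$. Conversely, a $k$-positive solution shows directly that $\alpha\in\Kcal_{k,\chi}$. Uniqueness of the $k$-positive solution is the standard comparison or maximum principle argument for concave elliptic operators on $\Gamma_k$, as cited.

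For (ii), suppose first that $\alpha\in\Pcal^0_{k,\chi}=\Kcal_{k,\chi}$ and that $\int_Z\Psi_p>0$ for proper $Z$. These inequalities are exactly \eqref{eq:quotendpoint} after multiplying by $k!/(k-n+p)!$. Lemma~\ref{lem:quotientmixed} gives the mixed inequalities with $\kappa=\chi$, and Theorem~\ref{thm:quotient} gives a solution. For the converse, a $k$-positive solution puts $\alpha$ in $\Kcal_{k,\chi}$. The pointwise positivity of $\Psi_q$ for admissible quotient solutions then yields the inequalities on proper $Z$ by restriction. Here I expect the main care to be needed. One must check that the subsolution inequality \eqref{eq:subsolution}, or the cone membership $\omega\in\Ccal_{f+\varepsilon}$ obtained in the proof of Theorem~\ref{thm:quotient}, implies strict positivity of $\Phi^f_p(\omega,\chi)$ on $p$-planes for $p<n$. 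This should follow from the derivative cone $\mathcal D_f$ together with the restriction identity \eqref{eq:inducedforms}. One must also confirm that a solution of \eqref{eq:quotPDE} is itself in $\mathcal D_f(\chi)$. That holds because $\sigma_k/\sigma_\ell=C'$ forces the derivative condition, by the standard concavity of the quotient on $\Gamma_k$.

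Finally, the replacement of component membership by the path condition, or by $\alpha+t[\kappa]\in\Pcal_{k,\chi}$ for all $t\ge0$, is precisely the last assertion of Theorem~\ref{thm:component}. This holds since \eqref{eq:numericalpath} for all $t\ge0$ is the same as the ray staying in $\Pcal_{k,\chi}$.
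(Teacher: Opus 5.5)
Your proposal is correct and follows the paper's proof. Theorem~\ref{thm:component} identifies $\Pcal^0_{k,\chi}$ with $\Kcal_{k,\chi}$; Theorems~\ref{thm:hessian} and~\ref{thm:quotient}, via Lemma~\ref{lem:quotientmixed}, give sufficiency; and necessity follows because a solution lies in $\Gamma_k(\chi)$ with pointwise positive $\Psi_q$. One small correction to the step you flagged: a quotient solution lies in $\mathcal D_f(\chi)$ because of the strict monotonicity $\partial_{\lambda_j}(\sigma_k/\sigma_\ell)>0$ on $\Gamma_k$ (a Newton--Maclaurin inequality), iterated after deleting the eigenvalues indexed by $J$, which stay in $\Gamma_{k-|J|}$ — not because of concavity, which by itself only gives nonstrict monotonicity.
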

\begin{proof}
Theorem~\ref{thm:component} identifies
$\Pcal^0_{k,\chi}$ with $\Kcal_{k,\chi}$. For a class in this
cone, Theorem~\ref{thm:hessian} gives the pure Hessian equation,
and Theorem~\ref{thm:quotient} gives the quotient equation under
the proper endpoint inequalities. Conversely, a solution lies
in $\Gamma_k(\chi)$, so its class belongs to $\Pcal^0_{k,\chi}$.
In the quotient case, the solution also gives the proper
endpoint inequalities. 
\end{proof}

\section{Numerical criteria for the critical LYZ equation}
\label{sec:critical}

We apply Theorem~\ref{thm:main} to the polynomial of the critical LYZ equation. It has degree $n-1$ and its derivative cone is the critical subsolution cone.

\subsection{The polynomial and its derivatives}

We let
\begin{equation}\label{eq:criticalpoly}
 f_{\mathrm{cr}}(t)=\frac1n\operatorname{Im}(t+\sqrt{-1})^n.
\end{equation}
For $0\le j\le n-2$,
we have
\begin{equation}\label{eq:criticalrootsderivative}
 f_{\mathrm{cr}}^{(j)}(t)
 =\frac{(n-1)!}{(n-j)!}\operatorname{Im}(t+\sqrt{-1})^{n-j}.
\end{equation}

For $m\ge2$, let $\theta=\mathrm{arccot}{(t)}$,
then
\(
 \operatorname{Im}(t+\sqrt{-1})^m
       =\frac{\sin(m\theta)}{\sin^m\theta}.
\)
We  obtain all its real roots:
\[
 t_a=\cot\frac{a\pi}{m},\qquad 1\le a\le m-1.
\]
The largest root is
$\cot\frac{\pi}{m}$. Equation~\eqref{eq:criticalrootsderivative} gives
\begin{equation}\label{eq:criticalrootchain}
 r\bigl(f_{\mathrm{cr}}^{(j)}\bigr)
       =\cot\frac\pi{n-j},\qquad 0\le j\le n-2.
\end{equation}
Thus
$f_{\mathrm{cr}}$ is strongly strictly right-Noetherian. In particular, it
satisfies the  hypothesis of Theorem~\ref{thm:main}.

For this polynomial, all the forms have the same expression:
\begin{equation}\label{eq:criticalPhi}
 \Phi_q^{f_{\mathrm{cr}}}(A,B)
 =\frac1q\operatorname{Im}(A+\sqrt{-1}B)^q,
 \qquad 1\le q\le n.
\end{equation}
Indeed, differentiating the top form $n-q$ times and normalizing gives
\[
 \left.\frac{d^{n-q}}{ds^{n-q}}\right|_{s=0}
 \Phi_n^{f_{\mathrm{cr}}}(A+sS,B)
 =\frac{(n-1)!}{(q-1)!}S^{n-q}\Phi_q^{f_{\mathrm{cr}}}(A,B).
\]

For $q=1$, the imaginary part is $B$. Thus the mixed inequalities for $q=1$ are automatic, and only
$q\ge2$ appears in Theorem~\ref{thm:critical}.

We also have
\begin{equation}\label{eq:criticalPhizero}
 Z_{\alpha,\chi}\in\R_{<0}, \qquad
 \int_X\Phi_n^{f_{\mathrm{cr}}}(\alpha,\chi)
       =\frac1n\operatorname{Im}Z_{\alpha,\chi}=0.
\end{equation}
The negative sign also fixes the principal argument to be $\pi$.

\subsection{The derivative components and the phase}

We use $\operatorname{arccot}t=\pi/2-\arctan t\in(0,\pi)$.
For a real eigenvalue vector $\lambda$, set
\[
 \theta_i=\operatorname{arccot}\lambda_i,
 \qquad \lambda_i+\sqrt{-1}
       =\sqrt{1+\lambda_i^2}\,e^{\sqrt{-1}\theta_i}.
\]
Polarization of \eqref{eq:criticalpoly} gives
\begin{align}
 F(\lambda)&=\Pol_n(f_{\mathrm{cr}})(\lambda)
       =\frac1n\operatorname{Im}\prod_{i=1}^n(\lambda_i+\sqrt{-1}),
  \label{eq:criticalpolar}\\
 \partial_jF(\lambda)
       &=\frac1n\operatorname{Im}\prod_{i\ne j}(\lambda_i+\sqrt{-1}).
     \label{eq:criticalpartial}
\end{align}
In particular,
\[
 \partial_jF(\lambda)
 =\frac1n\prod_{i\ne j}\sqrt{1+\lambda_i^2}\,
                       \sin\left(\sum_{i\ne j}\theta_i\right).
\]
The large positive diagonal lies in the region
$0<\sum_{i\ne j}\theta_i<\pi$.
The set
\[
 \left\{(\theta_i)_{i\ne j}:\theta_i>0,
                         \sum_{i\ne j}\theta_i<\pi\right\}
\]
is connected. The
distinguished positive component of $\partial_jF$ is exactly
$
 \sum_{i\ne j}\theta_i<\pi.$

We obtain
\begin{equation}\label{eq:criticalcomponent}
 \lambda\in\mathcal D_{f_{\mathrm{cr}}}
 \quad\Longleftrightarrow\quad
 \sum_{i\ne j}\arctan\lambda_i>(n-3)\frac\pi2
                         \quad\text{for any}j.
\end{equation}
where $\mathcal D_f$ denotes the distinguished derivative cone.
Hence $
 \mathcal D_{f_{\mathrm{cr}}+\varepsilon}
       =\mathcal D_{f_{\mathrm{cr}}}.$
Thus any form in $\Ccal_{f_{\mathrm{cr}}+\varepsilon}(\chi)$
satisfies the critical subsolution inequality.

To prove positivity of the lower derivative forms, suppose \eqref{eq:criticalcomponent} holds and let
$I\subset\{1,\ldots,n\}$ have $1\le |I|=q<n$.
Choose $j\notin I$ and then 
\(
 0<\sum_{i\in I}\theta_i
       \le\sum_{i\ne j}\theta_i<\pi.
\)
Thus we have
\begin{equation}\label{eq:criticalminorpositive}
 \operatorname{Im}\prod_{i\in I}(\lambda_i+\sqrt{-1})
 =\left(\prod_{i\in I}\sqrt{1+\lambda_i^2}\right)
                 \sin\left(\sum_{i\in I}\theta_i\right)>0.
\end{equation}
 Thus these forms are strictly strongly
positive for any $1\le q<n$.

\subsection{Proof of the numerical criterion}

\begin{proof}[Proof of Theorem~\ref{thm:critical}]
Assume (i). By \eqref{eq:criticalPhi}, each derivative form is a
positive multiple of the corresponding imaginary part. Hence
\[
 \int_V[\kappa]^{p-q}\Phi_q^{f_{\mathrm{cr}}}(\alpha,\chi)>0,
 \qquad 1\le q\le\min\{p,n-1\}.
\]
The case $q=1$ is automatic, since
\[
 \int_V[\kappa]^{p-1}\Phi_1^{f_{\mathrm{cr}}}(\alpha,\chi)
       =\int_V[\kappa]^{p-1}\chi>0.
\]
Thus all the mixed inequalities hold.
The polynomial is strictly right-Noetherian by
\eqref{eq:criticalrootchain}, and its top integral vanishes by
\eqref{eq:criticalPhizero}. We apply Theorem~\ref{thm:main} to
obtain a smooth closed form $\omega\in\alpha$ such that
\[
 \omega\in\Ccal_{f_{\mathrm{cr}}+\varepsilon}(\chi)
             \subset\mathcal D_{f_{\mathrm{cr}}}(\chi)
\]
for small $\varepsilon>0$. By
\eqref{eq:criticalcomponent}, this is  (ii).

Assume (ii), and choose a smooth closed $\theta\in\alpha$.
The $\partial\bar\partial$ lemma gives $A=\theta+\ddc\underline u$.
Since $\chi$ is K\"ahler, $\arg Z_{\alpha,\chi}=\pi$, and
$\underline u$ satisfies \eqref{eq:criticalsub}, we may apply
\cite[Theorem 1.1]{FYZ}. Then we obtain a smooth solution of
\eqref{eq:criticalPDE}, this proves the first assertion of (iii).
By \eqref{eq:criticalminorpositive},
each form
\[
 \Omega_q=\operatorname{Im}(A+\sqrt{-1}\chi)^q,
                    \qquad 1\le q<n,
\]
is strictly strongly positive. Fix any K\"ahler form $\kappa$.
For each  $(p,q)$, the continuous function
\[
 (x,P)\longmapsto
 \frac{(\kappa^{p-q}\Omega_q)|_P}
      {(\kappa^{p-q}\chi^q)|_P},
 \qquad P\in\Gr_p(T_x^{1,0}X),
\]
has a positive minimum. We take the minimum over the finitely
many pairs $(p,q)$ and call it $\delta_\kappa$.
Integration over the regular part of $V$
gives \eqref{eq:criticaluniform}. Closedness and Stokes' theorem
replace $A$ with its class $\alpha$. Hence (iii) holds with one
constant independent of $V,p,q$.  Finally, (iii) implies (i)
because its right side is positive.

\end{proof}

\section{Examples concerning the choice of component}
\label{sec:scope}

For $f(t)=t^n-a$, where $a>0$, we have
\[
 \Phi_q^f(\alpha,\chi)=\alpha^q,
 \qquad 0\le q<n.
\]
Under $\int_X\alpha^n=a\int_X\chi^n$, Theorem~\ref{thm:main}
therefore characterizes K\"ahler classes by the mixed inequalities.
These select the K\"ahler component of the numerical locus in
\cite[Theorem 0.1]{DP}; Theorem~\ref{thm:component} recovers that
component description when $k=n$.
For example, on a surface, $\alpha=-\chi$ satisfies
\[
 \int_X\alpha^2=\int_X\chi^2>0,
 \qquad \int_X[\kappa]\alpha=-\int_X[\kappa]\chi<0.
\]
Thus the mixed inequality on $X$ excludes the negative class despite
its positive top self-intersection.

A K\"ahler class can also lie on a different polynomial branch.
In dimension three, take
\[
 f(t)=(t-1)(t-2)(t-3),\qquad \alpha=\chi.
\]
This polynomial is strongly strictly right-Noetherian, and
$f(1)=0$ gives $\Phi_n^f(\chi,\chi)=0$. However,
\[
 \Phi_1^f(\alpha,\chi)=\alpha-2\chi=-\chi,
 \qquad
 \int_X[\kappa]^2\Phi_1^f(\alpha,\chi)<0.
\]
A solution in the distinguished derivative cone would make this
intersection number positive. Hence the K\"ahler assumption on
$\alpha$ does not by itself select the required branch.

\begingroup
\raggedright
\providecommand{\bysame}{\leavevmode\hbox to3em{\hrulefill}\thinspace}
\providecommand{\MR}{\relax\ifhmode\unskip\space\fi MR }
\providecommand{\MRhref}[2]{%
  \href{http://www.ams.org/mathscinet-getitem?mr=#1}{#2}
}
\providecommand{\href}[2]{#2}

\endgroup

\end{document}